\definecolor{br}{RGB}{230,115,0}
\numberwithin{equation}{section}
\theoremstyle{plain}
\newtheorem{theorem}[equation]{Theorem}
\newtheorem{prop}[equation]{Proposition}
\newtheorem{corollary}[equation]{Corollary}
\newtheorem{lemma}[equation]{Lemma}
\theoremstyle{definition}
\newtheorem{defn}[equation]{Definition}
\theoremstyle{remark}
\newtheorem{remark}[equation]{Remark}
\numberwithin{equation}{section}
\renewcommand{\emptyset}{\mbox{\textup{\O}}}
\DeclareMathOperator{\dist}{dist}
\DeclareMathOperator{\supp}{spt}
\DeclareMathOperator{\Dir}{Dir}
\DeclareMathOperator{\card}{card}
\DeclareMathOperator{\AC}{AC}
\DeclareMathOperator{\Ng}{Ng}
\newcommand{\RR}{\mathbb{R}}
\newcommand{\DD}{\mathbb{D}}
\newcommand{\NN}{\mathbb{N}}
\newcommand{\GG}{\mathcal{G}}
\newcommand{\So}{{\mathbb{S}^1}}
\newcommand{\AQ}{\mathcal{A}_Q}
\newcommand{\AQt}{\mathcal{A}_Q^\pm}
\newcommand{\bxi}{\boldsymbol{\xi}}
\newcommand{\brho}{\boldsymbol{\rho}}
\newcommand{\bdeta}{\boldsymbol{\eta}}
\renewcommand{\iint}{{\int\!\!\!\!\! \int\!\!}}
\def\XXint#1#2#3{{\setbox0=\hbox{$#1{#2#3}{\int}$}
     \vcenter{\hbox{$#2#3$}}\kern-.5\wd0}}
\DeclareMathOperator{\spt}{spt}
\DeclareMathOperator{\Imag}{Im}
\newcommand\genus{{\textsl{g}}}
\def\a#1{\left\llbracket{#1}\right\rrbracket}
\newcommand\R{{\mathbb R}}
\author[De Lellis]{Camillo De Lellis}
\address{School of Mathematics, Institute for Advanced Study, 1 Einstein Dr., Princeton NJ 05840, USA\\
and Universit\"at Z\"urich}
\email{camillo.delellis@math.ias.edu}
\author[Zhao]{Zihui Zhao}
\address{School of Mathematics, Institute for Advanced Study, 1 Einstein Dr., Princeton NJ 05840, USA}
\email{zzhao@ias.edu}
\begin{document}
\allowdisplaybreaks

\title{Dirichlet energy-minimizers with analytic boundary}

\thanks{The second author gratefully acknowledges support from the Institute for Advanced Study.}


\begin{abstract}
In this paper, we consider multi-valued graphs with a prescribed real analytic interface that minimize the Dirichlet energy. Such objects arise as a linearized model of area minimizing currents with real analytic boundaries and our main result is that their singular set is discrete in 2 dimensions. This confirms (and provides a first step to) a conjecture by B. White \cite{White97} that area minimizing $2$-dimensional currents with real analytic boundaries have a finite number of singularities.
We also show that, in any dimension, Dirichlet energy-minimizers with a $C^1$ boundary interface are H\"older continuous at the interface.
\end{abstract}

\maketitle

\section{Introduction and main result}

Consider a smooth closed curve $\Gamma$ in $\mathbb R^{2+n}$. The existence of oriented surfaces which bound $\Gamma$ and minimize the area can be approached in two different ways. Following the classical work of Douglas and Rado we can fix an abstract connected smooth surface $\Sigma_\genus$ of genus $\genus$ whose boundary $\partial \Sigma_\genus$ consists of a single connected component and look at smooth maps $\Phi: \Sigma_g \to \mathbb R^{2+n}$ with the property that the restriction of $\Phi$ to $\partial \Sigma_\genus$ is an homeomorphism onto $\Gamma$. We then consider the infimum $A_\genus (\Gamma)$ over all such maps $\Phi$ and all smooth Riemannian metrics $h$ on $\Sigma$ of the energy  
\[
\int_{\Sigma_\genus} |\nabla \Phi|^2{\rm dvol}_h \, .
\]
If $A_\genus (\Gamma) < A_{\genus -1} (\Gamma)$, then there is a minimizer, cf. \cite{Douglas,Courant40}, whose image is an immersed surface of genus $\genus$, with possible branch points. 
A different, more intrisic, approach was pioneered by De Giorgi, cf. \cite{DG}, in the codimension $1$ case, and by Federer and Fleming in higher codimension, cf. \cite{FF}. The latter looks at integral currents $T$ (a suitable measure-theoretic generalization of classical oriented submanifolds with boundary) whose boundary is given by $\a{\Gamma}$ and minimizes their mass, a suitable measure-theoretic generalization of the volume of classical submanifolds. The minimizer then always exists via the direct methods of the calculus of variations. 

There is a very natural question relating the two approaches: is every minimizer $T$ found by the Federer-Fleming theory a classical minimal surface with finite topology, namely a parametrized surface of some genus $\genus$? Note that if this were the case, then the sequence $\{A_\genus (\Gamma)\}_{\genus \in \mathbb N}$ would become constant for sufficiently large $\genus$.
When the codimension $n$ equals $1$ and $\Gamma$ is of class $C^{2,\alpha}$ for some $\alpha >0$, the interior regularity theorem of De Giorgi in \cite{De-GiorgiColombiniPiccinini72} and the boundary regularity theorem of Hardt and Simon in \cite{HS} imply that every minimizer $T$ is in fact a $C^{2,\alpha}$ embedded surface up to the boundary. Thus $T$ has finite genus $\genus_0$ and any conformal parametrization $\Phi$ over an abstract Riemann surface $\Sigma_{\genus_0}$ gives a minimizer in the sense of Douglas and Rado. On the other hand, Fleming in \cite{Fleming} showed a closed embedded curve $\Gamma$ in $\mathbb R^3$ of finite length for which $\{A_\genus (\Gamma)\}_{\genus \in \mathbb N}$ is not asymptotically constant.

The question is much more subtle in higher codimension, because singularities might arise, both at the interior and at the boundary.
In the work \cite{White97} White asks whether the topology of the minimizer $T$ is finite when $\Gamma$ is real analytic. If this conjecture were true, then $T$ would have finitely many singularities by the main theorem of \cite{White97}. The aim of this paper is to start a sort of reverse program to White's: under the assumption of real analyticity for the boundary $\Gamma$ we wish to show first that the set of boundary and interior singular points of $T$ is finite and hence to analyze the singularities and conclude that the topology of the minimizer is finite. 

It has been shown by Chang in \cite{Chang} that $T$ is smooth in $\R^n\setminus \Gamma$ up to a discrete set of singular branch points and in sufficiently small neighborhoods of such singular points the resulting branched surface is topologically a disk. We in fact refer to \cite{DSS1, DSS2, DSS3, DSS4} for a complete proof, as Chang needs a suitable modification of the techniques of Almgren's monumental monograph \cite{Alm} to start his argument, and the former has been given in full details in \cite{DSS3}. In order to attack White's conjecture it suffices therefore to deal with boundary regularity. In fact, even for $\Gamma$ of class $C^{2,\alpha}$, under the assumption that $\Gamma$ lies in the boundary of a uniformly convex set, the boundary regularity theorem of Allard \cite{AllB} implies that any minimizer $T$ is smooth at $\Gamma$; the general problem is however very subtle. So far the best available result is given in \cite{DDHM} and shows that the set of boundary regular points is dense in $\Gamma$ when $\Gamma$ is of class $C^{3,\alpha}$ for $\alpha >0$. The work \cite{DDHM} gives also an example of a smooth curve in $\R^4$ for which there is a unique Federer-Fleming minimizer with a sequence of singularities accumulating to a boundary branch point. This example has been modified in \cite{DDH} to produce $C^\infty$ embedded curves in complete $C^\infty$ Riemannian $4$-dimensional manifolds for which there is a unique Federer-Fleming minimizer with infinite topology. In particular there is a strong contrast to the codimension $1$ case: the real analyticity assumption in White's conjecture is, in a certain sense, needed\footnote{The examples of \cite{DDH} are curves in smooth almost K\"ahler manifolds $(\mathbb R^4, g)$, whose smooth metrics can be taken arbitrarily close to the euclidean one. However it is currently not known whether such examples exist in the Euclidean space.}.

\subsection{Linearized model} The analysis of interior singularities of area minimizing currents was pioneered by Almgren's monumental work in \cite{Alm} in the early eighties and recently revisited from a modern perspective by the first author and Emanuele Spadaro in \cite{DS}. The work \cite{DDHM} gives an Almgren type theory at the boundary, whereas the works \cite{DSS1,DSS2,DSS3,DSS4,Spolaor,DHMS1,DHMS2} extend the interior theory to other objects (almost calibrated currents and area minimizing currents modulo $p$). The starting point of all these papers, an essential discovery of Almgren, is to analyze the singularities for a suitable ``linearized model''. The main purpose of the present paper is to state and prove the appropriate linearized counterpart of White's conjecture. 

First of all we recall the notation $\AQ (\R^n)$ for the set of unordered $Q$-tuples of $\R^n$, which we will regard as nonnegative atomic measures with integer coefficients and total mass $Q$, cf. \cite[Introduction]{DS} for the formal definition and for the standard complete metric $\mathcal{G}$ which we will use on it. For atoms we will use the notation $\a{P}$ and thus elements in $\AQ (\R^n)$ will be denoted by $\sum_i \a{P_i}$. In what follows we will often write $\AQ$ instead of $\AQ (\R^n)$. 
We recall that for Sobolev functions $f\in W^{1,2}(\Omega, \AQ)$ (cf. again \cite[Introduction]{DS}) we set
\[ |Df|^2 := \sum_{j=1}^m |\partial_j f|^2, \]
where
\begin{equation}\label{eq:repD}
	|\partial_j f| = \sup_{i\in \NN}| \partial_j \GG(f, T_i)| \quad \text{almost everywhere in } \Omega,
\end{equation}
and $\{T_i\}_{i\in \NN}$ is a countable dense subset of $\AQ$. While such abstract definition is very direct and useful to work with, the Dirichlet energy turns out to be the sum of the Dirichlet energies of the different sheets in all cases where the multifunction $f$ can be ``nicely decomposed''. In an appropriate sense this can be justified also for any Sobolev functions, the reader is again referred to \cite{DS} for the relevant details.  

We now recall the notion of interior regular points.  

\begin{defn}[Interior regular point, Definition 0.10 of \cite{DS}]\label{def:intsing}
	A function $f\in W^{1,2}(\Omega, \AQ)$ is regular at a point $x\in \Omega$ if there exists a neighborhood $B$ of $x$ and $Q$ analytic functions $f_i: B\to \RR^n$ such that
	\[ f(y) = \sum_i \llbracket f_i(y) \rrbracket \quad \text{ for almost every } y\in B, \]
	and either $f_i(y) \neq f_j(y)$ for every $y\in B$, or $f_i \equiv f_j$.
	The complement of interior regular points is called the set of interior singular points.
\end{defn}

The following theorem on the interior regularity of Dir-minimizers was proven in \cite{DS}, refining a previous fundamental result by Almgren in \cite{Alm}:

\begin{theorem}[Theorem 0.12 in \cite{DS}]\label{thm:intsing}
	Let $f \in W^{1,2}(\Omega,\AQ)$ be Dir-minimizing and $m=2$. Then the interior singular set of $f$ consists of isolated points.
\end{theorem}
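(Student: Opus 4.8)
The plan is to combine the monotonicity of Almgren's frequency function with a blow-up analysis that, in the special case $m=2$, can be pushed to a complete classification of the admissible tangent functions.

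\emph{Reductions and the frequency function.} If $f$ is Dir-minimizing then the average $\eta\circ f:=\tfrac1Q\sum_i\a{f_i}$ is a single-valued $\R^n$-valued harmonic, hence real analytic, map, and replacing $f$ by the multifunction $y\mapsto\sum_i\a{f_i(y)-\eta\circ f(y)}$ changes neither Dir-minimality nor the set of interior singular points; so we may assume $\eta\circ f\equiv0$. Moreover Dir-minimizers admit a H\"older continuous representative in every dimension, so $f$ is continuous, and if $f(x_0)=\sum_i\a{p_i}$ has at least two distinct values $p_i$ then near $x_0$ the function $f$ splits as a sum of two Dir-minimizers with strictly fewer sheets, and one concludes by induction on $Q$. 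Hence it suffices to analyse a point with $f(x_0)=Q\a0$, and after a translation we take $x_0=0$. For such a point set $D(r):=\int_{B_r}|Df|^2$, $H(r):=\int_{\partial B_r}\GG\bigl(f,Q\a0\bigr)^2$, and consider Almgren's frequency $I(r):=rD(r)/H(r)$; its monotonicity for Dir-minimizers (established earlier in \cite{DS}) guarantees that $I_0:=\lim_{r\downarrow0}I(r)$ exists and is finite.

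\emph{Tangent functions.} The rescalings $f_r(y):=\bigl(r^{m-1}/H(r)\bigr)^{1/2}f(ry)$ are Dir-minimizing, have unit $L^2(\partial B_1)$ norm and uniformly bounded Dirichlet energy, so along a subsequence $f_{r_k}\to g$ strongly in $W^{1,2}_{\mathrm{loc}}$ to a nontrivial Dir-minimizing \emph{tangent function} $g$; by the monotonicity of the frequency, $g$ is homogeneous of degree $I_0$. It remains to understand the structure of such homogeneous minimizers and to deduce from it that singular points cannot accumulate.

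\emph{Classification when $m=2$ and conclusion.} Write $z=x_1+\mathrm i\,x_2$. A homogeneous Dir-minimizer $g$ on $\R^2$ is the homogeneous extension of its restriction $g|_{\So}$, which is a Dir-minimizer in $W^{1,2}(\So,\AQ)$; Dir-minimizers on the circle admit a complete explicit description (carried out in \cite{DS}), and requiring the homogeneous extension to be Dir-minimizing \emph{in the plane} forces a discreteness constraint on $I_0$ together with a rigid algebraic form: each irreducible piece of $g$ is a $k$-fold branched graph $z\mapsto\sum_{\zeta^k=1}\a{h(\zeta\,z^{1/k})}$ of an $\R^n$-valued harmonic map $h$ that is homogeneous of a single degree in $w=z^{1/k}$ (a combination of $\Real(w^j)$ and $\Imag(w^j)$). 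In particular the interior singular set of any such $g$ is either empty or $\{0\}$. Now suppose the interior singular set of $f$ accumulated at $0$, say $x_j\to0$ with each $x_j$ an interior singular point; rescaling at scale $r_j:=|x_j|$ we may assume $x_j/r_j\to y_\ast\in\So$ and $f_{r_j}\to g$ strongly in $W^{1,2}_{\mathrm{loc}}$. A quantitative version of the local decomposition of a Dir-minimizer near its \emph{regular} points (an $\varepsilon$-regularity statement) gives upper semicontinuity of the interior singular set under strong $W^{1,2}$-convergence of Dir-minimizers, so $y_\ast$ would be an interior singular point of $g$ lying on $\So$, contradicting the classification. Hence the interior singular set of $f$ is discrete.

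\emph{Main obstacle.} The decisive step is the two-dimensional rigidity: upgrading a Dir-minimizer with merely harmonic sheets to the branched-holomorphic normal form above. This rests on the exact classification of Dir-minimizers on $\So$ and on the compatibility of the homogeneity exponent with minimality of the planar extension, and — if one prefers to argue locally rather than through blow-up — on a removable-singularity step to extend a holomorphic branch selection across the potential branch point within the $W^{1,2}$ category. The second delicate ingredient is the $\varepsilon$-regularity underlying the upper semicontinuity of the singular set.
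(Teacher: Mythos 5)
This theorem is not proved in the paper: it is quoted verbatim as Theorem~0.12 of \cite{DS}. However, Section~\ref{s:final} proves the boundary analogue (Theorem~\ref{thm:main_simple}) by precisely the same strategy as \cite{DS}, so a meaningful comparison is possible, and the comparison reveals a genuine gap in your final step.

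Your reductions (subtracting the average, reducing to a point where $f(x_0)=Q\a{0}$, induction on $Q$), the invocation of frequency monotonicity, the passage to homogeneous tangent functions, and the planar classification all align with the actual route. The problem is the concluding argument. You rescale at $r_j=|x_j|$, extract a subsequential tangent $g$, and appeal to ``upper semicontinuity of the interior singular set under strong $W^{1,2}$-convergence'' to conclude that $y_*\in\So$ is a singular point of $g$, contradicting the classification. No such upper semicontinuity is proved in \cite{DS} or in this paper, and it is in fact false as you state it: the classification allows $g$ to carry pieces $k_j g_j$ with multiplicity $k_j\geq 2$ (cf.\ Proposition~\ref{prop:chartang} here and \cite[Proposition~5.1]{DS}). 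Near a point $y_*$ in the support of such a piece, $g$ is regular but has a sheet of multiplicity $k_j$, and uniform convergence only tells you that $f_{r_j}$ has a cluster of $k_j$ nearly-coinciding sheets there. Such a cluster can perfectly well carry a branch point at $x_j/r_j$ while degenerating, as $j\to\infty$, to the multiplicity-$k_j$ graph of regular $g$. So $x_j/r_j$ singular for $f_{r_j}$ does \emph{not} force $y_*$ singular for $g$, and the contradiction you seek does not materialize.

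What the actual proof uses instead is the \emph{uniqueness} of the tangent function together with the quantitative rate of convergence (here Proposition~\ref{prop:rate} and Theorem~\ref{thm:cvtang}), which upgrades soft subsequential compactness to the pointwise estimate $\GG(f(x),g(x))\leq\epsilon|x|^\alpha$ on a \emph{fixed} disk $B_{r_0}$. Choosing $\epsilon$ smaller than a fraction of the mutual separation of the pieces of $g$ on $\So$ then produces a decomposition of the fixed map $f$ itself on all of $B_{r_0}\setminus\{0\}$, either into Dir-minimizing pieces each with strictly fewer than $Q$ sheets, or (in the irreducible case) into everywhere-separated harmonic branches. The inductive hypothesis is applied to these \emph{fixed} pieces, which is what yields discreteness near the origin. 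Your blow-up variant only decomposes $f_{r_j}$ in a scale-$r_j$ neighborhood, and the pieces change with $j$, so the induction never bites. The rate-of-convergence estimate is the ingredient that closes the argument, and it cannot be replaced by a soft $\varepsilon$-regularity step—indeed, you yourself flag this as the ``second delicate ingredient,'' and the honest answer is that it is the wrong ingredient.
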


We now come to the boundary counterpart, following the approach of \cite{DDHM}. 
Suppose a hypersurface $\gamma$ divides a connected open set $\Omega \subset \RR^m$ into two connected components $\Omega^+$ and $\Omega^-$. For any set $K\subset \Omega$ we will use the notation $K^\pm$ for $K \cap \Omega^\pm$. Moreover, in order to avoid confusion, in the rest of the paper we will use the double integral symbol to indicate integration over subsets of $\RR^m$ with respect to the Lebesgue measure, and the single integral symbol to indicate integration over subsets of the hypersurface $\gamma$ with respect to the usual Hausdorff 
$(m-1)$-dimensional measure.

\begin{defn}
	We say that the pair $f=(f^+, f^-)$ is a $\left(Q-\frac{1}{2}\right)$-map with interface $(\gamma, \varphi)$ of class $W^{1,2}$  if there is  some (classical) function $\varphi\in H^{1/2} (\gamma, \mathbb R^n)$ such that
\begin{enumerate}[(i)]
	\item $f^+ \in W^{1,2}(\Omega^+, \AQ)$ and $f^-\in W^{1,2}(\Omega^-, \mathcal{A}_{Q-1} )$;
	\item $f^+|_{\gamma} = f^-|_{\gamma} + \llbracket \varphi \rrbracket$.
\end{enumerate}
We refer to \cite{DS,DDHM} for the trace theorems which allow to make sense of (ii) under our assumptions. For the corresponding set of pairs we will use the shorthand notation $W^{1,2} (\Omega, \AQt)$ and for each $f = (f^+, f^-) \in W^{1,2} (\Omega, \AQt)$ we define its Dirichlet energy as
\[ \Dir(f, \Omega) := \Dir(f^+, \Omega^+) + \Dir(f^-, \Omega^-) = \iint_{\Omega^+ }|Df^+|^2 + \iint_{\Omega^-} |Df^-|^2. \]

Finall, we say that $f=(f^+, f^-) \in W^{1,2}(\Omega, \AQt)$ is \textit{Dir-minimizing with interface $(\gamma,\varphi)$}, if $\Dir (g, \Omega) \geq \Dir (f, \Omega)$ for any other function $g \in W^{1,2}(\Omega, \AQt)$ with interface $(\gamma, \varphi)$ which agrees with $f$ outside of a compact set $K \subset \Omega$.
\end{defn}

The goal of the paper is to show that when the interface $(\gamma, \varphi)$ is real analytic and the domain is $2$-dimensional, Dir-minimizers enjoy a regularity theorem which is analogous to Theorem \ref{thm:intsing}. First of all 	a point $p\in \Omega\setminus \gamma$, namely belonging to either $\Omega^+$ or $\Omega^-$, will be called regular if it is a regular point for, respectively,$f^+$ or $f^-$ (cf. Definition \ref{def:intsing}). Its complement in $\Omega\setminus \gamma$ is the set of interior singular points, denoted by $\Sigma_f^i$. It remains to define regular points at the interface $\gamma$. 

\begin{defn}[Boundary regular point, Definition 2.6 of \cite{DDHM}]
	Let $f=(f^+, f^-)$ be a map in $W^{1,2}(\Omega, \AQt)$ with interface $(\gamma, \varphi)$. A point $p\in \gamma$ is regular if there are a ball $B_r(p)$, $(Q-1)$-functions $u_1, \cdots, u_{Q-1}: B_r(p) \to \RR^n$ and a function $u_Q: B_r^+(p) \to \RR^n$ such that
	\begin{itemize}
		\item $f^+ = \sum_{i=1}^Q \llbracket u_i \rrbracket$ on $B_r^+(p)$ and $f^- = \sum_{i=1}^{Q-1} \llbracket u_i \rrbracket$ on $B_r^-(p)$;
		\item For any pair $i, j \in \{1, \cdots, Q-1\}$ either the graphs of $u_i$ and $u_j$ are disjoint or they completely coincide;
		\item For any $i\in \{1, \cdots, Q-1\}$ either the graphs of $u_i$ and $u_Q$ are disjoint in $B_r^+(p)$ or the graph of $u_Q$ is contained in that of $u_i$.
	\end{itemize}
	The complement in $\gamma$ of the set of regular points is called the set of boundary singular points, denoted by $\Sigma_f^b$.
	\end{defn}

We can now state our main theorem:

\begin{theorem}\label{thm:main}
	Let $\Omega \subset \mathbb R^2$ and $(\gamma, \varphi)$ be an interface for which both $\gamma$ and $\varphi$ are real analytic. If $f\in W^{1,2} (\Omega, \AQt)$ is Dir-minimizing with interface $(\gamma, \varphi)$, then the singular set $\Sigma_f = \Sigma_f^i \cup \Sigma_f^b$ is discrete.
\end{theorem}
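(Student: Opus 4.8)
The plan is to reduce the statement (which is local along $\gamma$) to a normalized model, run an Almgren‑type blow‑up analysis at the interface, and use the real analyticity decisively at the final, quantitative stage to pass from a dimension bound to genuine discreteness.

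\emph{Reductions and blow‑up setup.} First I would exploit that in dimension $m=2$ the (multivalued) Dirichlet energy is conformally invariant, so that — since $\gamma$ is real analytic — a conformal change of coordinates straightens $\gamma$, near any of its points, to a segment of a line, mapping Dir‑minimizers to Dir‑minimizers with a still real analytic interface and \emph{exactly} the Dirichlet energy (no variable coefficients, a genuine $2$‑dimensional simplification). Next, writing $\varphi$ componentwise and complexifying, I would extend $\varphi$ to an $\mathbb R^n$‑valued map $\Phi$, real analytic up to $\gamma$ and harmonic on either side, and replace $f^\pm$ by $f^\pm\ominus\Phi$ (translation by $-\Phi$ in the target). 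Because $\Phi$ is harmonic and because of the interface identity $f^+|_\gamma=f^-|_\gamma+\llbracket\varphi\rrbracket$, the cross term this substitution creates in $\Dir$ integrates by parts to a boundary integral depending only on data that are fixed along any competitor class; hence $f$ is Dir‑minimizing for $(\gamma,\varphi)$ iff $f\ominus\Phi$ is Dir‑minimizing for $(\gamma,0)$, and analyticity of $\Phi$ leaves $\Sigma_f^i,\Sigma_f^b$ unchanged. Using the H\"older continuity at $C^1$ interfaces announced in the abstract to fix continuous representatives, I may then assume $\gamma=\{x_2=0\}$ and $\varphi\equiv 0$, introduce the boundary frequency function $I(f,p,r)$ of \cite{DDHM}, prove its monotonicity in this normalized setting so that $I_p:=\lim_{r\downarrow0}I(f,p,r)$ exists at every $p\in\gamma$, and obtain by compactness homogeneous Dir‑minimizing $(Q-\tfrac12)$‑tangent maps of degree $I_p$; at interior points I would invoke the blow‑up theory behind Theorem \ref{thm:intsing}, keeping track of interior singular points that approach $\gamma$, whose asymptotics must be slaved to the boundary tangent map at the limit point.

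\emph{Classification and discreteness.} In polar coordinates a homogeneous Dir‑minimizing $(Q-\tfrac12)$‑map of degree $\alpha$ with straight interface and $\varphi\equiv0$ solves a coupled eigenvalue problem on the half‑circles $\{0<\theta<\pi\}$ and $\{\pi<\theta<2\pi\}$, matched along the two interface rays $\{\theta=0\}$ and $\{\theta=\pi\}$; this pins $\alpha$ to an explicit discrete set, produces a finite list of profiles (built from $\cos(\alpha\theta)$ and $\sin(\alpha\theta)$ blocks), identifies which $\alpha$ correspond to regular points, and yields a frequency gap at singular points. The decisive step is to upgrade this to \emph{uniqueness of the tangent map} at each $p$, with a rate of convergence: here one wants a {\L}ojasiewicz--Simon / unique‑continuation estimate for the analytic Dirichlet functional or, equivalently in the concrete picture, the observation that off the (a priori small) singular set the sheets are real parts of holomorphic functions which extend holomorphically across the analytic interface by Schwarz reflection, so that branch points and sheet collisions become isolated zeros of non‑trivial holomorphic functions. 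Unique tangents together with the finite classification then force each singular $p$ to be isolated in $\Sigma_f$ — on a definite punctured neighbourhood $f$ is graphical over a profile whose only possible singular point is the origin — and the same reflection/continuation control prevents interior singular points from clustering on $\gamma$; combining this with Theorem \ref{thm:intsing} and a Federer‑type dimension‑reduction argument (which in $m=2$ gives only a $0$‑dimensional singular set, and is refined to discreteness by the previous step), we conclude that $\Sigma_f=\Sigma_f^i\cup\Sigma_f^b$ is discrete.

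\emph{Main obstacle.} The heart of the matter is exactly the passage from ``$\Sigma_f$ is $0$‑dimensional'' to ``$\Sigma_f$ is discrete'' near the interface, which rests on uniqueness of blow‑ups — the property that genuinely requires analyticity and fails already for $C^\infty$ (non‑analytic) interfaces, cf. the accumulation examples of \cite{DDHM,DDH}. Making the {\L}ojasiewicz--Simon scheme work for the \emph{coupled} $(Q-\tfrac12)$‑functional (the half‑sheet $u_Q$ living only on $\Omega^+$ makes the linearized operator and its kernel more delicate than in the interior case), or, in the alternative route, making the Schwarz‑reflection/holomorphic‑sheet argument rigorous in the presence of multivaluedness and possible global monodromy, is the principal difficulty; by comparison the boundary monotonicity formula and the classification of homogeneous $2$‑dimensional profiles, although technical because of the two interface rays, are comparatively routine adaptations of \cite{DS} and \cite{DDHM}.
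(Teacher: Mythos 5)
Your proposal follows the paper's broad architecture — conformal straightening of $\gamma$, subtracting a harmonic extension of $\varphi$ so that the interface becomes $(\mathbb R,0)$, a boundary frequency function, blow-ups to homogeneous $(Q-\tfrac12)$-minimizers, a classification of those, uniqueness of the tangent, and then discreteness — and several of your verbal claims (the conformal reduction being exact in dimension two, the cross-term in $\Dir$ integrating by parts to constants that are fixed over the competitor class) are exactly right. However, there are two genuine gaps in the final leg.

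\emph{The passage from uniqueness of tangents to discreteness is missing the actual mechanism.} You assert that ``unique tangents together with the finite classification force each singular $p$ to be isolated; on a definite punctured neighbourhood $f$ is graphical over a profile whose only possible singular point is the origin.'' This is not correct as stated, and more to the point it is not how the conclusion is reached: the minimizer $f$ is not graphical over the tangent map near $p$. What the rate-of-convergence estimate (Proposition \ref{prop:rate} / Theorem \ref{thm:cvtang}) does yield is $\mathcal G(f(x),g(x))\lesssim\epsilon|x|^\alpha$ on a punctured neighbourhood, and — crucially — the classification (Proposition \ref{prop:chartang}) guarantees that the irreducible pieces $g_0, g_1,\dots,g_J$ of the tangent $g$ have supports separated by a positive distance $\sim|x|^\alpha$. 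Combining these two, $f$ itself decomposes near $p$ as $(h_0^+,h_0^-)+\sum_{j\ge1}h_j$, where $h_0$ is a $(Q_0-\tfrac12)$-minimizer with $Q_0\in\{1,2\}$ and each $h_j$ is an interior $Q_j$-valued minimizer, \emph{with strictly fewer sheets than $f$} (unless we are in the very special $J=0$, $\tfrac23$-homogeneous case, which can be handled directly). Discreteness is then obtained by an \emph{induction on $Q$}: the pieces $h_j$ have discrete singular sets by the interior theory of \cite{DS}, and $h_0$ has discrete singular set by the inductive hypothesis; hence $\Sigma_f\cap B_{r_0}(p)$ is discrete, a contradiction. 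Your sketch never performs this decomposition, never invokes an inductive hypothesis, and the ``Federer-type dimension reduction'' you gesture at gives only a zero-dimensional bound, not discreteness — it does not close the gap.

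\emph{The symmetrization $Q\eta^+=(Q-1)\eta^-$ is essential and absent from your reduction.} Subtracting an extension of $\varphi$ gets you to interface $(\mathbb R,0)$, but the paper additionally subtracts a harmonic function $\phi$ built from $\eta^+$ and $\eta^-$ so that the centre-of-mass condition \eqref{eq:avgsym} holds, and this is preserved by rescaling and uniform limits. Without this normalization the classification in Proposition \ref{prop:chartang} cannot rule out the degenerate tangent $f=(k_0\a{h},(k_0-1)\a{h})$ with $h$ a single harmonic function vanishing on $\mathbb R$: in that case $J=0$, there is no separation into pieces, and the inductive step above has nothing to work with. So a proof along your lines that omits this step would break precisely at the decomposition.

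Finally, a methodological comment: you identify analyticity as intervening ``at the final, quantitative stage'' via a {\L}ojasiewicz--Simon estimate. In the paper analyticity enters only at the very beginning, to produce a single holomorphic diffeomorphism straightening $\gamma$ on a full two-sided neighbourhood (a genuinely two-sided extension of the Riemann map, which requires $\gamma$ analytic) and to extend $\varphi$ harmonically by Cauchy--Kovalevskaya. After that the interface is flat, $\varphi\equiv0$, and the uniqueness of the tangent is proved by a direct Fourier comparison on circles (Proposition \ref{prop:rate}), a purely $2$-dimensional elementary argument that uses no analyticity and no {\L}ojasiewicz--Simon machinery. Your alternative Schwarz-reflection heuristic is in the right spirit — it does underlie the interior classification \cite[Proposition 5.1]{DS} and is used in Proposition \ref{p:exception} — but it presupposes control of the singular set ``off which'' the sheets are holomorphic, which is exactly what needs to be proved, so it cannot be used as the primary engine without a blow-up and decomposition analysis of the kind sketched above.
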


In passing, we need a suitable estimate on the H\"older continuity of minimizers at the interface $\gamma$. The latter result is however not confined to the special dimension $m=2$ nor to real analytic interfaces $(\gamma, \varphi)$ and, although it is not immediately relevant for our main purposes, we state it in a more general case in the following

\begin{theorem}\label{thm:main2}
	Let $m\in \mathbb N \setminus \{0,1\}$ and suppose $(f^+, f^-)$ is a Dir-minimizing $(Q-\frac{1}{2})$-map in $\Omega\subset \R^m$ with interface $(\gamma, \varphi)$ of class $C^1$. Then $(f^+, f^-)$ is H\"older regular.
\end{theorem}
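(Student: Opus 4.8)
The plan is to establish interior Hölder continuity first (which is already available from \cite{DS}) and then concentrate all the work at the interface $\gamma$, proving a quantitative decay of the Dirichlet energy on half-balls centered at points of $\gamma$. The natural strategy mimics the classical Morrey-type argument used for ordinary Dir-minimizers: one shows that there is a constant $\alpha\in(0,1)$ such that for $p\in\gamma$ and small radii $r$,
\[
\Dir(f, B_r(p)) \leq C\,\left(\frac{r}{R}\right)^{m-2+2\alpha}\,\Dir(f, B_R(p)) + C\,r^{m-2+2\alpha}\,[\varphi]^2_{C^1},
\]
and then Hölder continuity follows from Campanato's characterization together with the trace/Poincaré estimates for $Q$-valued Sobolev functions. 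So the heart of the proof is this decay estimate.

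First I would set up the right coordinates, flattening $\gamma$ (this is legitimate since $\gamma$ is $C^1$, and one only loses a controlled error in the energy since the flattening map is $C^1$ and can be taken close to the identity after rescaling), and subtract off an extension of $\varphi$ so as to reduce to the case of a homogeneous interface condition $f^+|_\gamma = f^-|_\gamma + \llbracket 0 \rrbracket$ modulo a controlled $C^1$-perturbation. Then I would prove the decay by the usual dichotomy / compactness-contradiction scheme: either the energy in $B_r(p)$ is already dominated by the ``forcing'' term coming from the $C^1$ norm of $\varphi$ and the curvature of $\gamma$ (in which case there is nothing to prove at that scale), or one can blow up. In the blow-up one rescales $f$ by its energy, obtains in the limit a Dir-minimizing $(Q-\tfrac12)$-map with \emph{flat, linear} interface condition (indeed $\varphi$ linearizes to a linear function, which after a further subtraction becomes $0$), and invokes the frequency function monotonicity at boundary points together with the classification of homogeneous Dir-minimizers to conclude that the frequency is bounded below by $1$ — equivalently, the limiting profile has energy decay with exponent at least corresponding to $\alpha$ close to (or equal to) some explicit positive number. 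The key analytic inputs here are: (a) a boundary version of Almgren's frequency function, monotone for Dir-minimizing $(Q-\tfrac12)$-maps with flat interface — this is essentially already in \cite{DDHM}; (b) a compactness theorem for sequences of Dir-minimizers with converging interface data, again of the type proved in \cite{DS} and \cite{DDHM}; and (c) the lower bound on the frequency, which uses that the $f^-$ part is an honest $\mathcal A_{Q-1}$-valued map and the $f^+$ part is an $\AQ$-valued map whose trace on $\gamma$ equals the trace of $f^-$, so the ``extra sheet'' is pinned at the interface and cannot contribute a homogeneous mode of degree $<1$.

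Once the decay estimate is in hand at interface points, I would combine it with the interior decay (Theorem~\ref{thm:intsing}'s underlying estimates, or more simply the standard $\mathrm{Dir}$-minimizer interior estimates from \cite{DS}) via a covering argument: points far from $\gamma$ relative to their distance to $p$ use interior regularity, points close to $\gamma$ use the boundary decay, and a Campanato-space iteration over dyadic half-balls yields that $\mathcal G(f(x),f(y))\le C|x-y|^\alpha$ in a neighborhood of $\gamma$. Patching with interior Hölder regularity gives the global statement. The main obstacle I anticipate is technical rather than conceptual: making the flattening of $\gamma$ and the subtraction of $\varphi$ compatible with the $(Q-\tfrac12)$-structure (since $f^+$ and $f^-$ have different numbers of sheets, one must be careful that the perturbation respects the interface relation $f^+|_\gamma = f^-|_\gamma + \llbracket\varphi\rrbracket$ and does not destroy minimality beyond a controlled error), and correspondingly verifying that the error terms produced by the $C^1$ (rather than smoother) regularity of the interface are genuinely lower order — i.e. that a $C^1$ change of variables perturbs the Dirichlet energy by a factor $1+o(1)$ as the scale shrinks, uniformly, which is where the $C^1$ (and no better) hypothesis is exactly what is needed for the blow-up to produce a \emph{flat} limiting interface.
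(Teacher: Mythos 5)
Your proposed route relies on a compactness--contradiction scheme, but this is circular: in the $(Q-\tfrac12)$-valued setting the compactness one needs to extract a nontrivial Dir-minimizing limit from a normalized sequence of minimizers is itself established \emph{after}, and using, the equi-H\"older continuity you are trying to prove (cf.\ the proof of Theorem~\ref{thm:tangent}, which invokes Theorem~\ref{thm:Holder}). A uniform energy bound gives only weak $W^{1,2}$ convergence, insufficient to transfer minimality to the limit or to rule out a trivial limit. The paper instead proves the key decay estimate of Proposition~\ref{prop:Dirbd} \emph{directly}, by induction on $Q$: the two ingredients are (i) a maximum principle built on a strengthened Almgren retraction (Lemma~\ref{lm:retraction}, whose new conclusion (iii) is exactly what is needed to preserve the ``zero sheet'' carrying the interface relation), feeding into a splitting lemma (Proposition~\ref{prop:dcp}) that reduces to smaller $Q$; and (ii) explicit radial competitors together with the interpolation Lemma~\ref{lm:interpolation}, giving a strictly subcritical decay constant. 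No blow-up, compactness, or frequency function enters.

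Two further concrete defects in your plan. First, the boundary frequency monotonicity of \cite{DDHM} that you invoke requires a $C^{3,\alpha}$ interface, not $C^1$; the paper proves frequency monotonicity only for the flat interface in $m=2$, precisely because in higher dimensions one cannot straighten $\gamma$ conformally. Second, the claim that the frequency of the tangent function is bounded below by $1$ because the extra sheet is ``pinned'' at the interface is simply false: Lemma~\ref{l:char_irr_hom}(ii) together with Proposition~\ref{p:exception} exhibits a nontrivial $\tfrac32$-valued Dir-minimizer with flat interface that is homogeneous of degree $\tfrac23<1$. What one actually needs is a positive, $Q$-dependent lower bound on the decay exponent, and in dimension $m\geq 3$ (which Theorem~\ref{thm:main2} covers) no classification of homogeneous tangent maps is available to supply it; the direct comparison argument of Proposition~\ref{prop:Dirbd} is what replaces it.
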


In fact it is possible to give a precise estimate on a suitable H\"older seminorm of $f^{\pm}$ in terms of the regularity of the interface $(\gamma, \varphi)$ and the Dirichlet energy of the minimizer. For the precise statement we refer to Theorem \ref{thm:Holder}.

\subsection{Plan of the paper} The remaining sections are organized as follows. First of all in Section \ref{s:preliminaries} we make some preliminary elementary considerations on planar minimizers which will be particularly useful in the planar case of Theorem \ref{thm:main2} and in Theorem \ref{thm:main}. In Section \ref{s:hoelder} we address the general H\"older regularity result and prove therefore Theorem \ref{thm:main2}.  In the subsequent Section \ref{s:frequency} we give the fundamental computations leading to the monotonicity of the frequency function, a celebrated result of Almgren away from interface, extended at general interfaces in \cite{DDHM}: in our case the computations are simpler than in \cite{DDHM} because we can ``straighten the boudary'' using complex analysis. In Section \ref{s:rate} we use the frequency function estimate and the H\"older regularity to prove the existence of suitable blow-ups, or tangent functions, at singular points. A suitable modification of the argument given in \cite{DS} (which in turn borrowed from key ideas in \cite{Chang}) shows then the uniqueness of such objects. In Section \ref{s:classification} we give a list of necessary conditions that tangent functions must satisfy, which in turn leads to a suitable decomposition of them in simpler pieces (which we call irreducible maps). Such decomposition is combined together with the rate of convergence proven in Section \ref{s:rate} in order to decompose general Dir-minimizers at boundary singular points: the latter fact is then used in the final Section \ref{s:final} to conclude the proof of Theorem \ref{thm:main}.

\section{Reduction and preliminaries for the planar case}\label{s:preliminaries}

%
%
%

\subsection{Reduction of Theorem \ref{thm:main}}\label{s:mod_out_average} In this section we use elementary considerations in complex analysis to reduce Theorem \ref{thm:main} to a much simpler case. In order to state our theorem, we recall the definition of the map $\bdeta: \AQ (\R^n)\to \R^n$ which gives the barycenter of the atomic measure $T$:
\[
\bdeta \left(\sum_{i=1}^Q \a{P_i}\right) = \frac{1}{Q} \sum_{i=1}^Q P_i\, .
\]
In particular, if $(f^+, f^-)\in W^{1,2} (\Omega, \AQt)$ we can define two maps $(\eta^+, \eta^-)$ which are, respectively, the center of mass of the maps $f^+$ and $f^-$. In particular $\eta^\pm := \bdeta \circ f^\pm$,
where we make a slight abuse of notation because we keep the same symbol $\bdeta$ for two different maps, one defined on $\AQ$ and the other on $\mathcal{A}_{Q-1}$. Specifically:
\[
\eta^+ (x) = \frac{1}{Q} \sum_{i=1}^Q f^+_i (x)\qquad\mbox{and}\qquad
\eta^- (x) = \frac{1}{Q-1} \sum_{i=1}^{Q-1} f^-_i (x)\, .
\]
Theorem \ref{thm:main} can then be reduced to the following particular case:

\begin{theorem}\label{thm:main_simple}
Let $m=2$ and assume $(f^+,f^-)$ is Dir-minimizing in the unit disk $\mathbb D$ with interface $(\gamma, 0)$, where $\gamma$ is the coordinate axis $\{(x_1,0):x_1\in \mathbb R\}$. Assume further that $Q \eta^+ = (Q-1) \eta^-$. Then the singular set $\Sigma_f$ is discrete. 
\end{theorem}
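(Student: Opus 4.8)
The plan is to show that the reduction to this particular case is legitimate and then to treat the reduced problem by blow-up analysis. The reduction of Theorem \ref{thm:main} to Theorem \ref{thm:main_simple} will be carried out by three elementary moves, all exploiting that we are in the plane. First, since $\gamma$ and $\varphi$ are real analytic, a conformal change of variables (which does not affect the Dirichlet energy in $m=2$) straightens $\gamma$ locally to the coordinate axis; here one uses the Schwarz reflection/Cauchy--Kovalevskaya type extension to keep everything real analytic on both sides. Second, one subtracts off the ``harmonic'' part carrying the prescribed interface datum $\varphi$: using that $\varphi$ is real analytic one extends it to a harmonic (single-valued, $\RR^n$-valued) function near the axis, subtracts it from $f^+$ and from $f^-+\a{\varphi}$ appropriately, and reduces to $\varphi \equiv 0$. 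Third, one ``mods out the average'': the key point is that $Q\eta^+ - (Q-1)\eta^- $, after the previous normalizations, is itself (the trace data of) a harmonic function, because $\eta^\pm$ are Dir-minimizing $\RR^n$-valued maps hence harmonic in $\Omega^\pm$, they agree on $\gamma$ once $\varphi=0$, and a matching-of-normal-derivatives computation (first variation in the direction of a smooth single-valued vector field) shows the combined object extends harmonically across $\gamma$; subtracting suitable multiples of it from $f^+$ and $f^-$ produces a new minimizer with $Q\eta^+=(Q-1)\eta^-$ without creating interior or boundary singularities where there were none. This last step is the one requiring the most care, and I expect it to be the principal obstacle in the reduction, because one must check that the subtraction is compatible with the $\AQt$ structure (it only shifts all sheets by the same single-valued function) and that it genuinely kills the average.

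For the reduced statement itself I would argue as follows. Interior singularities are already handled: away from $\gamma$, $f^\pm$ is an ordinary $\Dir$-minimizing $\AQ$-valued map, so by Theorem \ref{thm:intsing} $\Sigma_f^i$ consists of isolated points. The work is therefore entirely at the interface. By Theorem \ref{thm:main2} (applicable since the axis and $\varphi\equiv 0$ are trivially $C^1$), $f$ is Hölder continuous up to $\gamma$; combined with the frequency function monotonicity — which in this straightened, zero-interface setting is much cleaner than in \cite{DDHM}, essentially because one can reflect and use complex analysis — this gives, at every interface point $p$, a well-defined frequency $I(p) \ge 0$ and the existence of tangent maps (blow-ups) that are $\Dir$-minimizing, homogeneous of degree $I(p)$, with interface $(\{x_2=0\},0)$ and still satisfying $Q\eta^+=(Q-1)\eta^-$. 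One then classifies such homogeneous minimizers: in two dimensions, using complex coordinates $z=x_1+ix_2$ on the upper half-disk and Schwarz reflection across the axis (the $\varphi=0$ interface condition is exactly what allows one to reflect $f^-$ to match $f^+$ and vice versa), each component of a homogeneous tangent map is built from branches of $z^{I(p)}$ times constant vectors, and the constraint $Q\eta^+=(Q-1)\eta^-$ together with the $(Q-\tfrac12)$ sheet-counting forces $I(p)$ to lie in a discrete set and forces genuine ``branching'' (so that $p$ is singular only for certain exceptional frequencies).

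To pass from this classification of tangent maps to discreteness of $\Sigma_f^b$ I would follow the scheme of \cite{DS} (after \cite{Chang}): prove uniqueness of the tangent map at each interface singular point with a quantitative rate of convergence (a Łojasiewicz-type or log-epiperimetric-type estimate obtained by pairing the frequency monotonicity with the Hölder estimate), then use this rate to decompose a general minimizer near an interface singular point into finitely many ``irreducible'' pieces, each of which is, up to higher-order error, a branch of a power map glued along the axis; a branch point of such a piece cannot accumulate, so boundary singular points are isolated in $\gamma$. Finally, interior singular points can only accumulate to $\gamma$ at a boundary singular point (again by the tangent-map analysis, an interior sequence of singularities accumulating at a regular interface point would contradict the local decomposition), so $\Sigma_f=\Sigma_f^i\cup\Sigma_f^b$ is discrete in $\Omega$. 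The hardest technical ingredient, beyond setting up frequency monotonicity at the interface, will be the uniqueness-with-rate of the interface tangent map: this is where the two-dimensionality and the analyticity are used most heavily, and where adapting the \cite{DS}/\cite{Chang} machinery to the presence of the $(Q-\tfrac12)$ interface is most delicate.
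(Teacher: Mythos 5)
The first half of your proposal is off-target: it outlines the reduction of Theorem \ref{thm:main} \emph{to} Theorem \ref{thm:main_simple} (straightening $\gamma$ conformally, subtracting a harmonic extension of $\varphi$, modding out the average), which is the content of Section \ref{s:mod_out_average}, not a proof of Theorem \ref{thm:main_simple} itself. That material is of course needed for the main theorem, but it is the other implication and you should not spend the proof there.

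For the reduced statement your plan has the right ingredients — Hölder regularity up to $\gamma$, monotonicity of a boundary frequency, existence and uniqueness (with a rate) of tangent maps, classification of homogeneous interface minimizers, and a local decomposition — and this is essentially the paper's route. However, there are two substantive gaps.

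First, you do not set up the argument as an \emph{induction on $Q$}, and without it the conclusion does not close. After decomposing $f$ near a putative accumulation point $x_0\in\gamma$ into a boundary piece $(h_0^+,h_0^-)\in W^{1,2}(B_{r_0},\mathcal A_{Q_0}^\pm)$ and interior-type pieces $h_j$, you need the inductive hypothesis (discreteness for $(Q_0-\tfrac12)$-minimizers with $Q_0<Q$) together with the interior theorem \cite[Theorem 0.12]{DS} to deduce that the union of their singular sets is discrete, which contradicts $x_0$ being an accumulation point. Your phrase ``a branch point of such a piece cannot accumulate'' does not by itself justify anything: the pieces are themselves multivalued Dir-minimizers, and the only reason their singular sets are discrete is the inductive hypothesis (boundary piece) and the already-known interior theorem (interior pieces). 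Likewise, the base case $Q=1$ needs to be stated (trivial, since $f^-$ is empty and $f^+$ is harmonic) for the induction to start.

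Second, your description of what the averaging constraint $Q\eta^+=(Q-1)\eta^-$ buys is not accurate. You claim it ``forces $I(p)$ to lie in a discrete set and forces genuine branching.'' The discreteness of possible frequencies is already a consequence of the classification of homogeneous planar minimizers (Lemma \ref{l:char_irr_hom}, Proposition \ref{prop:chartang}, \cite[Proposition 5.1]{DS}), independent of the averaging constraint. What the averaging constraint actually yields is the final clause of Proposition \ref{prop:chartang}: in cases (a) and (b), the portion $\sum_j k_j f_j$ of the tangent map must be nontrivial, i.e. $J\ge 1$. This is precisely what guarantees that the boundary piece has $Q_0<Q$ so that the induction can be applied. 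Without it, the tangent map could be of the form $(Q\a{h},(Q-1)\a{h})$ with $h$ a single harmonic branch, the decomposition would not reduce $Q$, and the inductive step would collapse. Stating the role of the constraint this way is not cosmetic: it is the hinge between the classification of tangent maps and the discreteness conclusion.
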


From now on, we introduce the convention that, if $\gamma = \{(x_1, 0): x_1\in \mathbb R\}$, then the interface $(\gamma, \varphi)$ is denoted by $(\RR, \varphi)$. This is motivated by the fact that we will often identify $\RR^2$ with the complex plane $\mathbb C$, via $(x_1, x_2) \mapsto x_1+ix_2$. The set $\{x_2=0\}$ is then the real axis of $\mathbb C$ after such identification. 
The above theorem will be proved at the end of the paper. In the next paragraph we show how the general case of Theorem \ref{thm:main} follows from it. 

Assume $(f^+,f^-)$ is as in Theorem \ref{thm:main}. First of all observe that, if $\Sigma_f$ is not discrete, then by Theorem \ref{thm:intsing} $\Sigma_f$ must have an accumulation point $p\in \gamma$.
Modulo translation we may assume $p$ is the origin. Since $\gamma$ is analytic, we may choose a coordinate system so that the tangent to $\gamma$ satisfies $T_0 \gamma = \{ x_2 = 0\} = \RR$. In particular $\gamma$ must be (locally) the graph $\{(t, \zeta (t))\}$ of a function $\zeta (t)$ whose Taylor series at the origin is $\sum_{k\geq 2} \alpha_k t^k$ (where $\alpha_k = \frac{\zeta^{(k)} (0)}{k!} \in \RR$). Identify $\mathbb R^2$ with the complex plane and consider, in a neighborhood of the origin, the holomorphic map $\Phi$ given by $\Phi(z) = z+ \sum_{k\geq 2} i\alpha_k z^k$. By the inverse function theorem the latter map is invertible in a sufficiently small neighborhood $U$ of the origin (which can be assumed to be a disk) and its inverse over $\Phi (U)$ is also holomorphic. Since $\Phi$ is conformal, $(f^+\circ \Phi^{-1}, f^-\circ \Phi^{-1})$ is clearly a minimizer in 
$V := \Phi (U)$ and the interface is $(T_0 \gamma, \varphi \circ \Phi^{-1})$. Moreover $\Phi$ maps the segment $\{\Imag z = 0\}\cap U$ onto $\gamma$. 
We can thus assume, without loss of generality, that $\gamma = \mathbb R$. 

Next, since $\varphi$ is real analytic, by the Cauchy-Kowalevski Theorem $\varphi$ has a harmonic extension in a neighborhood of the origin, still denoted by $\varphi$. We then replace $f=(f^+, f^-)$ with
\[ f^+(x) \mapsto g^+ (x) := \sum_{i=1}^Q \llbracket f_i^+(x) - \varphi(x) \rrbracket , \quad f^-(x) \mapsto g^- (x) := \sum_{i=1}^{Q-1} \llbracket f_i^-(x) - \varphi(x) \rrbracket. \]
Indeed, given a map $(\bar{g}^+, \bar{g}^-)$ with interface $(\RR, 0)$ and same trace on $\partial \mathbb D$ as $(\bar g^+, \bar g^-)$, consider the corresponding map $(h^+, h^-)$ where we add $\varphi$ on each side. The latter has interface $(\RR, \varphi)$ and coincides with $(f^+, f^-)$ on $\partial \mathbb D$. Moreover we compute
\begin{align*}
\iint_{\mathbb D^+} |Dh^+|^2 &= \iint_{\mathbb D^+} |D \bar g^+|^2 +Q \iint_{\mathbb D^+} |D \varphi|^2 + 2 \underbrace{Q \iint_{\mathbb D^+} D\bdeta \circ \bar g^+ : D \varphi}_{=: I^+}\\
\, \\
\iint_{\mathbb D^-} |Dh^-|^2 &= \iint_{\mathbb D^-} |D \bar g^-|^2 +(Q-1) \iint_{\mathbb D^-} |D \varphi|^2 + 2\underbrace{(Q-1) \iint_{\mathbb D^-} D\bdeta\circ \bar g^- : D \varphi}_{=:I^-}
\end{align*}
Using that the function $\varphi$ is harmonic we compute
\begin{align*}
I^+ 
&= \underbrace{Q \int_{(\partial \mathbb D)^+} \bdeta \circ \bar g^+ \cdot \frac{\partial \varphi}{\partial \nu}}_{=:J^+}
- \underbrace{Q \int_{\RR\cap \mathbb D} \bdeta \circ \bar g^+ \cdot \frac{\partial\varphi}{\partial x_2}}_{=:K^+}\\ 
I^- &= \underbrace{(Q-1) \int_{(\partial D)^-} \bdeta \circ \bar g^- \cdot \frac{\partial \varphi}{\partial \nu}}_{=:J^-}  +  \underbrace{(Q-1) \int_{\RR\cap \mathbb D} \bdeta\circ \bar g^- \cdot \frac{\partial \varphi}{\partial x_2}}_{=:K^-}
\end{align*}
Observe that $J^+$ and $J^-$ are both independent of the choice of $(\bar g^+, \bar g^-)$, because the traces of the respective maps on $(\partial \mathbb D)^\pm$ equals those of $(g^+, g^-)$. On the other hand $Q \bdeta \circ \bar g^+ - (Q-1) \bdeta \circ \bar g^-= 0$ on $\RR\cap \DD$. Therefore $K^--K^+ =0$.  This implies that the difference 
\[
\iint_{\mathbb D^+} |Dh^+|^2 + \iint_{\mathbb D^-} |Dh^-|^2 - \iint_{\mathbb D^+} |D \bar g^+|^2 - \iint_{\mathbb D^-} |D\bar g^-|^2
\]
is actually a constant. In particular, if we could find a competitor for $(g^+, g^-)$ with lower energy, then we could transform it into a competitor for $(f^+, f^-)$ with lower energy: we conclude that $(g^+, g^-)$ must be a Dir minimizer with interface $(\RR, 0)$. 

Observe next that $\eta^+ = \bdeta \circ f^+$ and $\eta^- = \bdeta \circ f^-$ are harmonic functions in $\DD^+$ and $\DD^-$, respectively. For any $x=(x_1, x_2)\in \RR^2$, we denote $\bar x = (x_1, -x_2)$ the reflection point of $x$ across $\RR$. We define a function $\phi: \mathbb D \to \RR^n$ as 
\begin{equation}
	\phi(x) = \left\{ 
	\arraycolsep=1.4pt\def\arraystretch{2.2} 
	\begin{array}{ll}
		\dfrac{ Q\eta^+(x) - (Q-1) \eta^-(\bar x) }{ 2Q-1}, & \quad x_2 \geq 0, \\
		\dfrac{(Q-1) \eta^-(x) - Q \eta^+(\bar x)}{2Q-1}, & \quad x_2 \leq 0.
	\end{array} \right.
\end{equation}
Clearly $\phi$ is harmonic in $\mathbb D \setminus \RR$.
By the boundary condition $f^+|_{\gamma} = f^-|_{\gamma} + \llbracket 0 \rrbracket$, we know 
\[ Q\eta^+ = (Q-1) \eta^- \quad \text{ on } \RR. \]
Hence $\phi$ is continuous and odd in the variable $x_2$. In particular $\phi$ is harmonic on all of $\mathbb D$. Therefore by modifying $(f^+, f^-)$ as follows 
\[ f^+(x) \mapsto \widetilde f^+(x):= \sum_{i=1}^Q \llbracket f_i^+(x) - \phi(x) \rrbracket, \quad x\in \RR^m_+, \]
\[ f^-(x) \mapsto \widetilde f^-(x):= \sum_{i=1}^{Q-1} \llbracket f_i^-(x) - \phi(x) \rrbracket, \quad x\in \RR^m_- \]
and repeating the same computations as above we conclude that
the new function $(\widetilde f^+, \widetilde f^-)$ is still a Dir-minimizer with the same interface $(\RR, 0)$. Notice also that
\[
	\sum_{i=1}^Q \widetilde f^+_i(x) = Q \eta^+(x) - Q \phi(x) = \frac{Q(Q-1)}{2Q-1} \left( \eta^+(x) + \eta^-(\bar x) \right),
\]
\[
	\sum_{i=1}^{Q-1} \widetilde f^-_i(x) = (Q-1) \eta^-(x) - (Q-1) \phi(x) = \frac{Q(Q-1)}{2Q-1} \left( \eta^-(x) + \eta^+(\bar x) \right),
\]
and thus
\[
	\sum_{j=1}^Q \widetilde f^+_j(x) = \sum_{j=1}^{Q-1} \widetilde f^-_j(\bar x).
\] 
For simplicity we still denote the new function as $(f^+, f^-)$, except that its center of mass $(\eta^+, \eta^-)$ now enjoys an additional symmetry:
\begin{equation}\label{eq:avgsym}
	Q \eta^+(x) = (Q-1) \eta^-(\bar x).
\end{equation}
This symmetry is invariant under translation, scaling and uniform limit.

\subsection{Decomposition into irreducible maps} In this section we extend a suitable decomposition of $Q$-valued maps on the circle to the case of $(Q-\frac{1}{2})$-valued maps.
Recall that map $g\in W^{1,p}(\So, \AQ)$ is called \textit{irreducible} if there is no decomposition of $g$ into two simpler $W^{1,p}$ functions (cf. \cite{DS}), namely if there are no integers $Q_1, Q_2>0$ and maps $g_1\in W^{1,p} (\So, \mathcal{A}_{Q_1}), g_2 \in W^{1,p} (\So, \mathcal{A}_{Q_2})$ such that $g = g_1+g_2$ (in particular $Q_1+Q_2=Q$).

\begin{defn}[Irreducible $(Q-\frac{1}{2})$-maps on $\mathbb S^1$]
	A map $g = (g^+, g^-)\in W^{1,p}(\So, \AQt)$ with interface $(\RR, \varphi)$ is called \textit{irreducible} if there is no decomposition of $g$ into the ``sum'' of a map $g_1 \in W^{1,p}(\So,\mathcal{A}_{Q_1})$ and a map $g_2 \in W^{1,p}(\So, \mathcal{A}_{Q_2}^{\pm})$ with the same interface $(\gamma,\varphi)$, where the positive integers $Q_1, Q_2$ satisfy $Q_1 + Q_2 = Q$.
	The ``sum'' is understood in the following sense:
\begin{align*}
&g^+ = g_1 + g_2^+\qquad \mbox{on $(\mathbb S^1)^+ = \{z\in \mathbb C: |z|=1, \text{Re}\, z >0\}$}\\
&g^- = g_1 + g_2^-\qquad \mbox{on $(\mathbb S^1)^- = \{z\in \mathbb C: |z|=1, \text{Re}\, z <0\}$.} 
\end{align*}
\end{defn}
\begin{remark}
	By the above definition, clearly any function $g\in W^{1,p}([0,\pi], \RR^n)$ satisfying $g(0) = g(\pi) = 0$ is irreducible with $Q=1$ (the interface being $(\RR, 0)$).
\end{remark}

The decomposition of $W^{1,p}$ $(Q-\frac{1}{2})$-valued map on the circle is then a corollary of the following proposition for $Q$-valued maps, where, for any interval $I=[a,b]\subset \RR$, we denote by $\AC(I,\AQ)$ the space of absolutely continuous functions taking values in the metric space $(\AQ, \mathcal{G})$.

\begin{prop}[Proposition 1.2 of \cite{DS}]\label{prop:oddecomp}
	Let $g\in W^{1,p}(I, \AQ)$. Then
	\begin{enumerate}[(a)]
		\item $g\in \AC(I, \AQ)$ and moreover, $g\in C^{0,1-\frac1p}(I,\AQ)$ for $p>1$;
		\item There are $g_1, \cdots, g_Q\in W^{1,p}(I, \RR^n)$ s.t. $f =\sum_i \a{g_i}$ and $|Dg_i| \leq |Dg|$ a.e.
	\end{enumerate}
\end{prop}

\begin{prop}[Decomposition of $W^{1,p}(\mathbb{S}^1, \mathcal{A}_Q^{\pm})$]\label{prop:decomp}
	A map $g\in W^{1,p}(\So, \AQt)$ with interface $(\RR,\varphi)$ is either irreducible, or it can be decomposed as $g= g_0 + \sum_{j=1}^J g_j$, where $g_0\in W^{1,p}(\So, \mathcal{A}_{Q_0}^{\pm}) $ is irreducible with interface $(\RR, \varphi)$, and each $g_j \in W^{1,p}(\So, \mathcal{A}_{Q_j})$ is irreducible. Moreover, a map $g\in W^{1,2}(\So, \AQt)$ with interface $(\RR, \varphi)$ is irreducible if and only if the following two conditions are satisfied:
	\begin{enumerate}[(i)]
		\item $\card(g^+(\theta)) = Q$ for every $\theta\in [0,\pi]$, and $\card(g^-(\theta)) = Q-1$ for every $\theta \in [\pi,2\pi]$.
		\item There exists a $W^{1,p}$ map $\zeta:\So \to \RR^n$ with $\zeta(0) = \varphi (1)$ and $\zeta(2\pi) = \varphi (-1)$ such that $g$ unwinds to $\zeta$, in the following sense: $g^+ = \sum_{j=1}^Q \llbracket g_j^+ \rrbracket$ and $g^-= \sum_{j=1}^{Q-1} \llbracket g_j^- \rrbracket$ with
		\begin{equation}\label{eq:unwind1}
			g_j^+(\theta) = \zeta\left(\frac{2\theta}{2Q-1} + \frac{4\pi}{2Q-1} (j-1) \right), \quad \theta\in [0,\pi], j=1,\cdots,Q, 
		\end{equation} 
		\begin{equation}\label{eq:unwind2}
			g_j^-(\theta) = \zeta\left(\frac{2\theta}{2Q-1} + \frac{4\pi}{2Q-1} (j-1) \right), \quad \theta\in[\pi,2\pi], j=1,\cdots,Q-1. 
	\end{equation}
	\end{enumerate}
\end{prop}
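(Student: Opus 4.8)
### Proof proposal for Proposition~\ref{prop:decomp}

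The plan is to reduce everything to the single-valued picture on the circle and to the known decomposition result for pure $Q$-valued maps (Proposition~\ref{prop:oddecomp}), by ``unwinding'' $g$ to an auxiliary single-valued curve $\zeta$ on $\So$. First I would deal with the \emph{structure of the decomposition}. Given $g=(g^+,g^-)\in W^{1,p}(\So,\AQt)$ with interface $(\RR,\varphi)$, apply Proposition~\ref{prop:oddecomp} separately to $g^+$ on the arc $[0,\pi]$ and to $g^-$ on the arc $[\pi,2\pi]$ to write them as (unordered) sums of absolutely continuous single-valued branches; in particular each branch is continuous up to the endpoints $1=e^{i0}$ and $-1=e^{i\pi}$, so the traces $g^+(0),g^+(\pi),g^-(\pi),g^-(2\pi)$ make classical sense and the interface condition $g^+|_\gamma = g^-|_\gamma + \a{\varphi}$ says precisely that, after removing from $g^+(0)$ one branch equal to $\varphi(1)$ and from $g^+(\pi)$ one branch equal to $\varphi(-1)$, the remaining $Q-1$ branches agree as atomic measures with $g^-(\pi)$ and $g^-(2\pi)$ respectively. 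Now consider the ``continuation graph'': starting from a branch of $g^+$ at $\theta=0$, follow it continuously to $\theta=\pi$; there it either is the extra branch (hits $\varphi(-1)$) and stops, or it matches a branch of $g^-$, which we follow from $\pi$ to $2\pi$; there it matches a branch of $g^+$ again, and so on. Each maximal such chain is an absolutely continuous curve, it is closed unless it begins or ends at $\varphi(\pm1)$, and if $g$ admits \emph{no} nontrivial splitting then there must be exactly one chain which exhausts all $2Q-1$ branch-arcs and runs from $\varphi(1)$ to $\varphi(-1)$. Indeed, the union of the arcs traced by any subcollection of closed chains together with (optionally) the single open chain forms a sub-map which is itself a $W^{1,p}$ $\mathcal{A}_{Q_j}$- or $\mathcal{A}_{Q_0}^\pm$-map with the appropriate interface, giving a decomposition; so irreducibility forces a single open chain and no closed ones, and conversely a single open chain that uses all arcs cannot be split because any would-be summand would have to be a union of whole chains. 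This proves the dichotomy (irreducible vs.\ $g=g_0+\sum_j g_j$), with $g_0$ the part carrying the open chain (which contains $\varphi(\pm1)$) and the $g_j$ the closed chains; irreducibility of the pieces is immediate since each is a single chain.

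Next I would establish the \emph{explicit ``unwinding'' characterization} (i)--(ii) for irreducibility. If $g$ is irreducible, run the single chain above: it is an absolutely continuous curve that passes through $\varphi(1)$ at one end and $\varphi(-1)$ at the other, alternately traversing arcs of $g^+$ over $[0,\pi]$ (there are $Q$ of them) and arcs of $g^-$ over $[\pi,2\pi]$ (there are $Q-1$), for a total ``time'' of $2\pi - \pi = $ no---rather, the chain consists of $Q$ intervals of length $\pi$ coming from $g^+$ and $Q-1$ intervals of length $\pi$ coming from $g^-$, $2Q-1$ intervals in all, of total length $(2Q-1)\pi$. Reparametrizing this chain by arclength-fraction onto $\So\cong[0,2\pi]$ (i.e. rescaling the parameter by $\tfrac{2}{2Q-1}$) yields the desired $\zeta\colon\So\to\RR^n$ with $\zeta(0)=\varphi(1)$, $\zeta(2\pi)=\varphi(-1)$, and the branches of $g^\pm$ are exactly the $2Q-1$ ``windows'' of $\zeta$ described in \eqref{eq:unwind1}--\eqref{eq:unwind2}; the affine shifts $\tfrac{4\pi}{2Q-1}(j-1)$ are forced by the bookkeeping that two consecutive $g^+$-arcs are separated by one $g^-$-arc, so each carries $\zeta$ over a parameter-window of length $\tfrac{2\pi}{2Q-1}$ with successive windows advanced by twice that amount. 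One has to check $\card(g^+(\theta))=Q$ for \emph{every} $\theta\in[0,\pi]$ (not just a.e.): if two branches of $g^+$ coincided at some interior $\theta_0$, the chain would split into two shorter closed/open chains at that point, contradicting irreducibility; this gives (i). Conversely, if (i)--(ii) hold then the formulas \eqref{eq:unwind1}--\eqref{eq:unwind2} show $g$ is a single chain through $\varphi(\pm1)$, which by the first part is irreducible. The Sobolev regularity $\zeta\in W^{1,p}$ and the bound $|D\zeta|$ in terms of $|Dg|$ follow from Proposition~\ref{prop:oddecomp}(b) and the fact that the reparametrization is affine on each window.

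The \textbf{main obstacle} I expect is the careful handling of the interface/endpoint combinatorics: one must track precisely which branch of $g^+$ at $\theta=0$ and at $\theta=\pi$ is the ``extra'' one (the $\varphi$-branch) and verify that the chain construction is well-defined even when several branches collide on $\gamma$ or when the extra branch coincides in value with a matched branch (the third bullet of the boundary-regular-point definition, $u_Q$ contained in $u_i$, is exactly this degenerate situation). This is where the proof in \cite{DS} for the pure $Q$-valued case must be genuinely adapted rather than quoted: the ``half-integer'' asymmetry between the two arcs means the chain has odd combinatorial length $2Q-1$, the two endpoints are distinguished (they carry $\varphi(1)$ and $\varphi(-1)$), and one cannot simply cyclically permute as in the closed-curve case. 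Once the chain bookkeeping is set up correctly, checking that each maximal chain yields a genuine $W^{1,p}$ sub-map with the right interface, and that the reparametrized $\zeta$ lands in $W^{1,p}(\So,\RR^n)$, is routine via Proposition~\ref{prop:oddecomp}.
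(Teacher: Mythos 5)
Your proof follows essentially the same route as the paper's: both reduce to the one-dimensional branch structure of Proposition~\ref{prop:oddecomp}, track how branches of $g^+$ at $\theta=0,\pi$ match up with branches of $g^-$ (modulo the $\varphi$-atom), and identify $\zeta$ by concatenating the $2Q-1$ matched arcs. The only real difference is presentational: you phrase the bookkeeping in terms of ``maximal chains'' in a matching graph, while the paper fixes a selection, reorders indices so the open chain occupies $g_1^+,g_1^-,\dots,g_{Q_0}^+$, and then argues $Q_0=Q$ by exhibiting an explicit splitting. Your rerouting argument for (i) --- if two branches of $g^+$ coincide at an interior $\theta_0$, swap them there to detach a shorter closed chain --- is exactly the paper's construction of $\widetilde g^+$ and $(f_1^+,f_1^-)$, just described more informally. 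The ``main obstacle'' you flag (ambiguity in the matching when several branches collide on $\gamma$, or when the $\varphi$-branch coincides in value with a matched one) is real but is handled automatically: one only needs \emph{some} admissible matching, and any choice of selection in Proposition~\ref{prop:oddecomp} gives one; the paper bypasses the worry by simply reordering indices. So I would say the proof is correct and is in substance the paper's proof.
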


\begin{proof}
	The existence of an irreducible decomposition in the above sense is an obvious consequence of the definition of irreducible maps. It remains to show the characterization of irreducible maps.
	
	By Proposition \ref{prop:oddecomp} a map satisfying (i) and (ii) is clearly irreducible with interface $(\gamma, \varphi)$. Suppose $g\in W^{1,p}(\So, \AQt)$ with interface $(\gamma, \varphi)$ is irreducible. Without loss of generality (i.e. after possible subtracting to all sheets an extension of $\varphi$) we can assume $\varphi \equiv 0$. Namely 
	\begin{equation}\label{eq:bcirred}
		g^+|_{\gamma} = g^-|_{\gamma} + \llbracket 0 \rrbracket. 
	\end{equation} 
	Recalling Proposition \ref{prop:oddecomp}, we consider  a selection $g^+_1, \cdots, g^+_Q \in W^{1,p}([0,\pi], \RR^n)$ of the map $g^+\in W^{1,p}([0,\pi], \AQ)$, and  a selection $g^-_1, \cdots, g^-_{Q-1} \in W^{1,p}([\pi, 2\pi], \RR^n)$ of $g^- \in W^{1,p}([\pi,2\pi], \mathcal{A}_{Q-1})$. We assume without loss of generality that $g_1^+(0) = \llbracket 0 \rrbracket$. By the boundary condition \eqref{eq:bcirred}, there exists an integer $Q_0$, $1\leq Q_0 \leq Q$, such that after reordering the selections $g^+_i(\pi) = g^-_{i}(\pi) \neq 0$ and $g_i^-(2\pi) = g_{i+1}^+(0) $ for all $i = 1, \cdots, Q_0-1$, $g^+_{Q_0}(\pi) = 0$. Suppose $Q_0 < Q$, then we define
	\[ f_1^+ = \sum_{i=1}^{Q_0} \llbracket g_i^+ \rrbracket, \quad f_1^- = \sum_{i=1}^{Q_0-1} \llbracket g_{i}^- \rrbracket, \]
	and
	\[ f_2 = \left\{ \begin{array}{ll} \sum\limits_{i=Q_0 +1 }^{Q} \llbracket g_i^+ \rrbracket, & \theta\in [0,\pi] \\
	\sum\limits_{i=Q_0}^{Q-1} \llbracket g_i^- \rrbracket, & \theta\in [\pi,2\pi]	
 		\end{array}\right.
 	\]
 	By \eqref{eq:bcirred}, the map $f_1:= (f_1^+, f_1^-)$ lies in $ W^{1,p}(\So, \mathcal{A}_{Q_0}^{\pm})$ with interface $(\gamma, \varphi)$; the map $f_2$ is well-defined on $\gamma$, i.e. $f_2(\pi-) = f_2 (\pi+)$ and $f_2(2\pi) = f_2(0)$, and moreover $f_2 \in W^{1,p}(\So, \mathcal{A}_{Q - Q_0})$. In other words, this gives a nontrivial decomposition of the irreducible map $g$, contradiction. Hence $Q_0 = Q$, and we define the function $\zeta$ by following $g_i^+, g_i^-$, $g_{i+1}^+$ in order.
 	
 	Suppose $\card(g^+) \neq Q$, that is, there exist $\theta_0 \in [0,\pi]$ and $i_1<i_2$ such that $g^+_{i_1}(\theta_0) = g^+_{i_2}(\theta_0)$. Let
 	\[ \widetilde g^+ = \left\{\begin{array}{ll}
 		g_{i_1}^+, & \theta \in [0,\theta_0] \\
 		g_{i_2}^+, & \theta \in [\theta_0, \pi].
 	\end{array} \right. \]
 	Then the following map gives a decomposition of $g$:
 	\[ f_1^+ = \sum_{i=1}^{i_1-1} \llbracket g_i^+ \rrbracket + \llbracket \widetilde g \rrbracket + \sum_{i=i_2+1}^{Q} \llbracket g_i^+ \rrbracket, \quad  f_1^- = \sum_{i=1}^{i_1-1} \llbracket g_i^- \rrbracket + \sum_{i=i_2}^{Q-1} \llbracket g_i^- \rrbracket. \]
 	Since $(f_1^+, f_1^-) \in W^{1,p}(\So, \mathcal{A}_{Q+i_1-i_2}^\pm)$ with interface $(\gamma, \varphi)$, and $Q+i_1-i_2 < Q$, this is a nontrivial decomposition of the irreducible map $g$, contradiction. Hence $\card(g^+) = Q$. Similarly $\card(g^-) = Q-1$.
\end{proof}

\subsection{Rolling and unrolling}
 The decomposition of the previous section can be used to construct efficient competitors to Dirichlet minimizers in the planar case. Again the situation is similar to that of $Q$-valued maps. Keeping our identification $\mathbb R^2 = \mathbb C$ we will denote by $[0,1]$ the ``slit'' $\{(x_1, 0): 0\leq x_1 \leq 1\}$ and on the domain $\DD\setminus [0,1]$ we will consider polar coordinates $(r, \theta) \in ]0,1[\times ]0, 2\pi[$, via the usual parametrization $(r, \theta)\mapsto r e^{i\theta}$. Given a map $\zeta\in W^{1,2} (\DD\setminus [0,1], \RR^n)$ we can define two maps $\zeta^u, \zeta^l \in H^{1/2} ([0,1], \RR^n)$ which are, respectively, the ``upper'' and ``lower'' traces of $\zeta$ on the slit $[0,1]$. In particular in polar coordinates we can naturally extend $\zeta$ to $]0,1[\times [0, 2\pi]$ setting $\zeta (r, 0) = \zeta^u (r)$ and to $\zeta (r, 2\pi) = \zeta^l (r)$. In the next lemma and its applications we will follow the latter convention.

\begin{lemma}[Unrolling, analogue of Lemma 3.12 in \cite{DS}]\label{lm:unrolling}
	Suppose $\zeta\in W^{1,2}(\DD\setminus [0,1], \RR^n)$ and consider the $\left(Q-\frac12 \right)$-valued function $f=(f^+, f^-)$ defined as follows:
	\begin{align}
		f_j^+(r,\theta) &= \zeta\left(r^{\frac{2}{2Q-1}}, \frac{2\theta}{2Q-1} + \frac{4\pi}{2Q-1} (j-1) \right), \quad \theta\in [0,\pi], j=1,\cdots,Q, \label{eq:unwindsld1}\\
		f_j^-(r,\theta) &= \zeta\left(r^{\frac{2}{2Q-1}}, \frac{2\theta}{2Q-1} + \frac{4\pi}{2Q-1} (j-1) \right), \quad \theta\in[\pi,2\pi], j=1,\cdots,Q-1. \label{eq:unwindsld2}
	\end{align} 
	(For $Q=1$ we just ignore $f^-$.)
	Then $f\in W^{1,2}(\DD, \AQt)$ and
	\begin{equation}\label{eq:unrollD}
		\Dir(f,\mathbb{D}) = \iint_{\mathbb{D}} |D\zeta|^2. 
	\end{equation} 
	Moreover, if $\zeta|_{\mathbb{S}^1} \in W^{1,2}(\mathbb{S}^1,\RR^n)$, then $f|_{\So} \in W^{1,2}(\So,\AQt)$ and
	\begin{equation}\label{eq:unrollH}
		\Dir(f|_\So,\So) = \frac{2}{2Q-1} \int_{\So} |\partial_\tau \zeta|^2,
	\end{equation}
	where $\partial_\tau$ denotes the tangential derivative on $\So$.
\end{lemma}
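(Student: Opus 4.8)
The plan is to realize each sheet $f_j^{\pm}$ as the composition of $\zeta$ with an explicit conformal map of $\DD^{\pm}$ onto a circular sector of $\DD$, and then to invoke the conformal invariance of the planar Dirichlet integral; the half–integer counting is encoded in the exponent $\alpha:=\tfrac{2}{2Q-1}$, which makes the $Q$ sheets coming from $\DD^{+}$ and the $Q-1$ sheets coming from $\DD^{-}$ correspond to $2Q-1$ sectors of opening $\alpha\pi=\tfrac{2\pi}{2Q-1}$ that tile $\DD$. Concretely, let $g_1^{+},\dots,g_Q^{+}$ and $g_1^{-},\dots,g_{Q-1}^{-}$ be the maps which in polar coordinates read $g_j^{\pm}(r,\theta)=\bigl(r^{\alpha},\ \alpha\theta+\tfrac{4\pi(j-1)}{2Q-1}\bigr)$, with $(r,\theta)$ ranging over $\DD^{+}$ for the $g_j^{+}$ and over $\DD^{-}$ for the $g_j^{-}$; each of these is (a branch of) the holomorphic map $z\mapsto e^{ic_j}z^{\alpha}$, hence a conformal diffeomorphism of $\DD^{\pm}$ onto a circular sector $S_j^{\pm}\subset\DD$ of opening $\alpha\pi$, and by \eqref{eq:unwindsld1}--\eqref{eq:unwindsld2} one has $f_j^{+}=\zeta\circ g_j^{+}$ on $\DD^{+}$ and $f_j^{-}=\zeta\circ g_j^{-}$ on $\DD^{-}$. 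A direct inspection shows that, listed in the cyclic order $S_1^{+},S_1^{-},S_2^{+},\dots,S_{Q-1}^{-},S_Q^{+}$, these $2Q-1$ sectors have pairwise disjoint interiors and cover $\DD$ up to the origin and a finite union of radii, exactly one of which is the slit $[0,1]$.

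For the interior energy identity: since $g_j^{\pm}$ is holomorphic with non-vanishing derivative on $\DD^{\pm}\setminus\{0\}$ and $\zeta\in W^{1,2}(\DD\setminus[0,1],\RR^n)$, the chain rule gives $f_j^{\pm}\in W^{1,2}_{\mathrm{loc}}(\DD^{\pm}\setminus\{0\},\RR^n)$ and, by conformal invariance of the two-dimensional Dirichlet integral, $\iint_{\DD^{\pm}}|Df_j^{\pm}|^{2}=\iint_{S_j^{\pm}}|D\zeta|^{2}<\infty$. Since a point has zero $W^{1,2}$-capacity in $\RR^2$, finiteness of the energy upgrades this to $f_j^{\pm}\in W^{1,2}(\DD^{\pm},\RR^n)$, so $f^{+}:=\sum_{j=1}^{Q}\a{f_j^{+}}\in W^{1,2}(\DD^{+},\AQ)$ and $f^{-}:=\sum_{j=1}^{Q-1}\a{f_j^{-}}\in W^{1,2}(\DD^{-},\mathcal A_{Q-1})$. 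Using that the Dirichlet energy of a $W^{1,2}$ $\AQ$-valued map equals the sum of the energies of any $W^{1,2}$ selection of it (cf. \cite{DS}), together with the tiling of $\DD$ and $|[0,1]|=0$,
\[
\Dir(f,\DD)=\sum_{j=1}^{Q}\iint_{S_j^{+}}|D\zeta|^{2}+\sum_{j=1}^{Q-1}\iint_{S_j^{-}}|D\zeta|^{2}=\iint_{\DD\setminus[0,1]}|D\zeta|^{2}=\iint_{\DD}|D\zeta|^{2},
\]
which is \eqref{eq:unrollD}.

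Next I would identify the interface. The bounding radii of the sectors are carried by the $(g_j^{\pm})^{-1}$ onto the two rays $\{\theta=0\}$ and $\{\theta=\pi\}$ of $\DD^{\pm}$, i.e. onto the interface $\RR\cap\DD$; going around the cyclic list, the common radius of two consecutive sectors is interior to $\DD\setminus[0,1]$ except for the slit $[0,1]$ itself. Since $\zeta$ is continuous across an interior radius, the matching sheets of $f^{+}$ and $f^{-}$ agree on $\RR\cap\DD$, whereas across the slit one picks up precisely the one-sided traces $\zeta^{u}$ (on $\{0\le x_1<1\}$) and $\zeta^{l}$ (on $\{-1<x_1\le0\}$); hence $f^{+}|_{\RR}=f^{-}|_{\RR}+\a{\varphi}$ with $\varphi:=Q\,(\bdeta\circ f^{+})|_{\RR}-(Q-1)(\bdeta\circ f^{-})|_{\RR}$, which coincides with $\zeta^{u}$ on one side of the interface and with $\zeta^{l}$ on the other. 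As $\bdeta\circ f^{+}=\tfrac1Q\sum_j f_j^{+}$ and $\bdeta\circ f^{-}=\tfrac1{Q-1}\sum_j f_j^{-}$ lie in $W^{1,2}(\DD^{\pm},\RR^n)$, the trace theorem gives $\varphi\in H^{1/2}(\RR\cap\DD,\RR^n)$, so $f=(f^{+},f^{-})\in W^{1,2}(\DD,\AQt)$. (For $Q=1$ the map $f^{-}$ is absent, $f^{+}=\zeta\circ(z\mapsto z^{2})$, and the computation above still yields $\Dir(f,\DD)=\iint_{\DD}|D\zeta|^{2}$.) Finally, if $\zeta|_{\So}\in W^{1,2}(\So,\RR^n)$, then $g_j^{\pm}|_{\So}$ maps the semicircle $(\So)^{\pm}$ onto the arc $\So\cap S_j^{\pm}$ dilating arclength by the constant $\alpha$, so $|\partial_\tau f_j^{\pm}|=\alpha\,(|\partial_\tau\zeta|\circ g_j^{\pm})$ on $(\So)^{\pm}$ and
\[
\int_{(\So)^{\pm}}|\partial_\tau f_j^{\pm}|^{2}=\alpha^{2}\cdot\tfrac1\alpha\int_{\So\cap S_j^{\pm}}|\partial_\tau\zeta|^{2}=\tfrac{2}{2Q-1}\int_{\So\cap S_j^{\pm}}|\partial_\tau\zeta|^{2};
\]
summing over $j$ and the two signs and using that the arcs $\So\cap S_j^{\pm}$ tile $\So$ gives $f|_{\So}\in W^{1,2}(\So,\AQt)$ and \eqref{eq:unrollH}.

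I expect the only genuinely delicate point to be the bookkeeping in the third step: keeping track, in the correct cyclic order, of which sheet of $f^{\pm}$ borders which side of the interface, so that the relation $f^{+}|_{\RR}=f^{-}|_{\RR}+\a{\varphi}$ comes out with the right datum $\varphi$ (namely $\zeta^{u}$ on one side and $\zeta^{l}$ on the other) and with no spurious interior discontinuities of the $f_j^{\pm}$. The ingredients this leans on — conformal invariance of the planar Dirichlet energy, vanishing $W^{1,2}$-capacity of a point in $\RR^2$, additivity of the energy over a $W^{1,2}$ selection, and $W^{1,2}$-regularity of the barycenter — are all standard, and the two energy identities are then immediate changes of variables.
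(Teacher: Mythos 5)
Your proof is correct and follows the same route as the paper: both realize each sheet $f_j^\pm$ as the composition of $\zeta$ with a conformal map $z\mapsto e^{ic_j}z^{2/(2Q-1)}$ sending the half-disk onto one of the $2Q-1$ sectors of opening $2\pi/(2Q-1)$ that tile $\DD$, then invoke conformal invariance of the planar Dirichlet energy for \eqref{eq:unrollD} and the constant angular dilation factor for \eqref{eq:unrollH}. You supply a couple of details the paper leaves implicit (the removability of the origin via zero $W^{1,2}$-capacity, and the explicit identification of the interface datum $\varphi$ with the one-sided traces $\zeta^u$, $\zeta^l$ of $\zeta$ across the slit), but the argument is the same in substance.
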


\begin{proof}
	We define the following subsets of the unit disk,
	\[ \mathcal{C} = \left\{re^{i\theta}: 0<r<1, \theta \neq 0\right\}, \]
	\[ \mathcal{C}^+ = \left\{re^{i\theta}: 0<r<1, 0< \theta <\pi \right\}, \quad \mathcal{C}^- = \left\{re^{i\theta}: 0<r<1, \pi < \theta <2\pi \right\}; \]
	\[ \mathcal{D}_j = \left\{re^{i\theta}: 0<r<1,\, \frac{2\pi}{2Q-1} 2(j-1) < \theta <  \frac{2\pi}{2Q-1} 2j \right\}, \quad j=1, \cdots, Q-1, \]
	\[ \mathcal{D}_j^+ = \left\{re^{i\theta}: 0<r<1,\, \frac{2\pi}{2Q-1} 2(j-1)< \theta <  \frac{2\pi}{2Q-1} (2j-1) \right\}, \quad j=1, \cdots,Q, \]
	\[ \mathcal{D}_j^- = \left\{re^{i\theta}: 0<r<1,\, \frac{2\pi}{2Q-1} (2j-1)< \theta <  \frac{2\pi}{2Q-1} 2j \right\}, \quad j=1, \cdots,Q-1. \]
	For $j=1, \cdots, Q-1$, we define $\varphi_j: \mathcal{C} \to \mathcal{D}_j$ as
	\[ \varphi_j(re^{i\theta}) = r^{\frac{2}{2Q-1}} e^{i\left( \frac{2\theta}{2Q-1} + \frac{4\pi}{2Q-1}(j-1) \right)}; \]
	and we define $\varphi_Q: \mathcal{C}^+ \to \mathcal{D}_Q^+$ as
	\[ \varphi_Q(re^{i\theta}) = r^{\frac{2}{2Q-1}} e^{i\left( \frac{2\theta}{2Q-1} + \frac{4\pi}{2Q-1}(Q-1) \right)}. \]
	Then 
	\[ f^+ = \sum_{j=1}^Q \llbracket \zeta \circ \varphi|_{\mathcal{C}^+} \rrbracket \text{ and } f^- = \sum_{j=1}^{Q-1} \llbracket \zeta \circ \varphi|_{\mathcal{C}^-} \rrbracket. \]
	Since $re^{i\theta} \mapsto r^{\frac{2}{2Q-1}} e^{i \frac{2\theta}{2Q-1}}$ is a conformal map, each $\varphi_j$ is conformal.
	So by the invariance of the Dirichlet energy under conformal mappings, we deduce that $f^{+} \in W^{1,2}(\mathcal{C}^+, \AQ)$, $f^- \in W^{1,2}(\mathcal{C}^-, \mathcal{A}_{Q-1})$ and
	\begin{align*}
		\Dir(f, \mathcal{C}) & = \Dir(f^+, \mathcal{C}^+) + \Dir(f^-, \mathcal{C}^-) 
		= \sum_{j=1}^Q \Dir(\zeta \circ \varphi_j, \mathcal{C}^+) + \sum_{j=1}^{Q-1} \Dir(\zeta \circ \varphi_j, \mathcal{C}^-) \\
		& = \sum_{j=1}^Q \Dir(\zeta , \mathcal{D}_j^+) + \sum_{j=1}^{Q-1} \Dir(\zeta , \mathcal{D}_j^-) = \Dir\left(\zeta, \cup_{j=1}^{Q-1} \mathcal{D}_j \cup \mathcal{D}_Q^+ \right) 
		= \iint_{\DD} |D\zeta|^2.
	\end{align*}
	On the other hand, since
	\[ \partial_\tau \left( \zeta \circ \varphi_j \right) = \partial_\theta \left(\zeta \circ \varphi_j \right) = \frac{2}{2Q-1} \partial_\tau \zeta \circ \varphi_j, \]
	we have
	\[ \Dir(\zeta\circ \varphi_j|_{\So}, (\So)^+) = \int_{(\So)^+} \left( \frac{2}{2Q-1} \right)^2 | \partial_\tau \zeta \circ \varphi_j|^2 = \frac{2}{2Q-1} \int_{\frac{2\pi}{2Q-1} 2(j-1) }^{\frac{2\pi}{2Q-1} (2j-1)} |\partial_\tau \zeta|^2.  \]
	An entirely analogous computations on $(\So)^-$ makes it straightforward to show that the restriction $f|_{\So}$ of $f$ to $\So$ belongs to  $W^{1,2}(\So, \AQt)$ and that
	\[ \Dir(f|_{\So}, \So) = \frac{2}{2Q-1} \int_{\So} |\partial_\tau \zeta|^2. \qedhere\]
\end{proof}

\section{H\"older continuity at the interface}\label{s:hoelder}

In this section we prove the H\"older regularity Theorem \ref{thm:main2}, whose conclusion we make more quantitative in the following statement. 

\begin{theorem}[Boundary H\"older regularity of Dir-minimizer, analogue of Theorem 3.9 in \cite{DS}]\label{thm:Holder}
	For every $0<\delta< \frac12$, there exist constant $\alpha = \alpha(m,Q)\in (0,1)$ and $C=C(m,n,Q, \delta)$ with the following property. Assume that $\gamma$ is a $C^1$ graph of a function $\zeta$ over $\mathbb R$ passing through the origin with $\|\zeta\|_{C^1}\leq 1$ and that $\varphi \in C^1 (\gamma)$. If $f\in W^{1,2}(B_1, \AQt)$ is Dir-minimizing with interface $(\gamma,\varphi)$, then
	\begin{align}
		[f]_{C^{0,\alpha}(\overline B_{\delta})} := &\; \max \left\{  \sup_{x,y\in \overline{B^+_{\delta}}}  \frac{\GG(f^+(x),f^+(y))}{|x-y|^\alpha}, \sup_{x,y\in \overline{B^-_{\delta}}}  \frac{\GG(f^-(x),f^-(y))}{|x-y|^\alpha} \right\}\nonumber\\
 \leq &\; C \Dir(f,B_1)^{\frac{1}{2}} + C \|D \varphi\|_{C^0}.
	\end{align}
\end{theorem}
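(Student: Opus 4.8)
The plan is to prove the decay estimate $\Dir(f, B_r(p)) \le C (r/\rho)^{2\alpha}\,\Dir(f, B_\rho(p))$ (up to an error term controlled by $\|D\varphi\|_{C^0}$) for all balls centered at interface points and for interior balls that do not reach the interface, and then to upgrade this to Hölder continuity by the classical Morrey-type argument (already used for $Q$-valued maps in \cite{DS}). First I would reduce to the normalized situation: after subtracting a harmonic extension of $\varphi$ from every sheet (exactly the computation carried out in Section \ref{s:mod_out_average}, whose algebra shows the Dirichlet energy changes only by a controlled quantity), and after the conformal straightening of $\gamma$, I may assume $\gamma = \mathbb R$ and $\varphi \equiv 0$, at the cost of additive errors of size $\|D\varphi\|_{C^0}$ and $\|\zeta\|_{C^1}$ in the final estimate. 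For interior balls the decay is just the interior estimate of \cite{DS}, so the whole content is the estimate at interface points, and by scaling it suffices to prove a single ``one-step'' energy-improvement: there is $\lambda < 1$ with $\Dir(f, B_{1/2}^\pm) \le \lambda\, \Dir(f, B_1^\pm)$ whenever the energy on $B_1$ is below a fixed threshold.

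The main tool for the one-step estimate is the comparison with a competitor built by the unrolling Lemma \ref{lm:unrolling}. Concretely: on the circle $\mathbb S^1$ the restriction $f|_{\mathbb S^1}$ decomposes (Proposition \ref{prop:decomp}) into irreducible $Q$-valued pieces, which are genuinely multivalued and enjoy a Poincaré-type inequality on $\mathbb S^1$ with a gap (a $k$-valued irreducible map unwinds to a single-valued map on a circle of length $2k\pi$, whose lowest nonzero frequency is $1/k$), plus one irreducible $(Q-\tfrac12)$-piece which, by the interface condition $g^+|_{\mathbb R} = g^-|_{\mathbb R}$, unwinds to a single function on an interval/circle of length $2\pi(2Q-1)$ with vanishing trace relations, giving its own spectral gap $\frac{2}{2Q-1}$ in the energy identity \eqref{eq:unrollH}. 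Taking as competitor on $B_1$ the radially-homogeneous extension of $f|_{\mathbb S^1}$ of the appropriate degree (constant barycenter having been arranged, or else handled separately since the barycenter is harmonic), minimality of $f$ forces $\Dir(f, B_1) \le \Dir(\text{competitor}, B_1)$, and the spectral gap converts this into $\Dir(f, B_\rho) \le \rho^{2\alpha}\Dir(f, B_1)$ with $\alpha = \alpha(m, Q)$ depending only on the worst gap; for $m > 2$ one runs the same scheme on geodesic balls of $\mathbb S^{m-1}$ split by an equator, using the corresponding eigenvalue estimate for the ``half-sphere with interface'' — this is where $\alpha$ acquires its dimensional dependence.

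The step I expect to be the main obstacle is making the competitor construction and the spectral gap fully rigorous in dimension $m \ge 3$, where ``unrolling'' is no longer a literal conformal change of variables: one must replace the planar branched-cover picture with an eigenvalue lower bound for the Dirichlet form on the spherical cap complement of $\gamma \cap \mathbb S^{m-1}$ acting on $(Q-\tfrac12)$-valued maps with the interface constraint, and show the lowest such eigenvalue is strictly larger than the one obtained from single-valued competitors — the interface condition couples the $+$ and $-$ sides, so the naive separation-of-variables estimate needs the trace matching to produce the gap. A secondary technical point is controlling the error terms coming from the non-flatness of $\gamma$ and from $\varphi \ne 0$ uniformly in the scale $r$: the harmonic-extension subtraction must be redone at every scale, but since $\varphi$ is only $C^1$ its harmonic extension has Dirichlet energy on $B_r$ that does not decay, so one tracks it additively and absorbs it into the $\|D\varphi\|_{C^0}$ term in the final seminorm bound. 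Once the uniform decay is in hand, the passage to the $C^{0,\alpha}$ seminorm on $\overline B_\delta$ is the standard Campanato/Morrey argument together with the elementary fact that the $\mathcal G$-distance is controlled by the mean oscillation of $f$ on balls, exactly as in \cite[Theorem 3.9]{DS}.
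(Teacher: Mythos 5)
Your outline for $m=2$ is essentially the paper's: decompose $f|_{\mathbb S^1}$ into irreducible pieces (Proposition \ref{prop:decomp}), unroll the boundary piece to a single-valued function vanishing at the endpoints, build a competitor by Fourier-mode extension, and read off the gap $\frac{2}{2Q-1}$ from \eqref{eq:unrollH}. The Campanato--Morrey upgrade at the end is also exactly as in the paper.

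For $m\geq 3$, however, your proposed route departs from the paper and, as you yourself flag, leaves the main step open. You want an eigenvalue lower bound for the Dirichlet form on the spherical cap with interface constraint, strictly better than the single-valued value $m-2$, as the mechanism producing the decay. The paper does \emph{not} prove (and does not need) any such spectral gap. Instead, Proposition \ref{prop:Dirbd} is established by a completely different scheme: (Step~1) after fattening $f^-$ by a zero sheet, one tries the $0$-homogeneous radial competitor; since that competitor is never a solution of the one-dimensional Euler--Lagrange equation, the constant $\frac{1}{m-2}$ is already beaten by a definite $\gamma(Q,M)>0$ \emph{provided} the diameter $d(\bar g)$ of an averaged boundary value is at most $M$; (Step~2) if instead $d(\bar g)>M$, one uses Almgren's collapsing lemma together with the retraction Lemma \ref{lm:retraction} (chosen to preserve the $\a{0}$ sheet) and the maximum principle to force, in a slightly smaller ball, a nontrivial splitting of the minimizer into a $K$-valued piece and an $(L-\frac12)$-piece with $K,L\le Q-1$; the inductive hypothesis on $Q$ then gives the improved constant, and the interpolation Lemma \ref{lm:interpolation} re-attaches the boundary data with a controlled cost. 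So the gap is produced by compactness plus induction on $Q$, not by an explicit eigenvalue estimate. This is the concrete missing idea in your proposal: the decomposition/maximum-principle machinery (Lemma \ref{lm:retraction}, Proposition \ref{prop:dcp}, Lemma \ref{lm:interpolation}) that replaces the unrolling argument in dimensions $m\ge 3$.

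Two secondary points. First, the ``conformal straightening of $\gamma$'' you invoke in the normalization step is available only in $m=2$; for $m\ge 3$ the paper keeps $\gamma$ as a $C^1$ graph, extends $\varphi$ to a $C^1$ function $\phi$ on $B_1$, and tracks the error $\|D\phi\|_{C^0}$ additively through both the Campanato--Morrey lemma and the decay estimate (see the reduction to $\varphi\equiv 0$ at the start of the proof of Proposition \ref{prop:Dirbd}). Second, even in the half-sphere picture you sketch, there is no branched-cover/conformal trick to pass from a $(Q-\frac12)$-valued map to a single-valued one when $m\ge 3$, so the ``spectral gap from trace matching'' would have to be proved by some independent means — which, absent an argument like the paper's, you have not supplied.
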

The proof consists of two main steps. A comparison argument is used to prove a suitable decay of the Dirichlet energy on balls with vanishing radius. The decay is then combined with a Campanato-Morrey estimate to show H\"older regularity. 

\subsection{Campanato-Morrey estimate}
We first record the following extension of a classical result by Morrey. In the case of $Q$-valued maps we refer to \cite{DS}. In our case we need a suitable additional argument to treat the case of $\left(Q-\frac{1}{2} \right)$-valued functions.

\begin{lemma}[Campanato-Morrey estimate]\label{lm:CM}
	Suppose $(f^+,f^-)\in W^{1,2}(B_1, \AQt)$ is a map with interface $(\gamma, \varphi)$ as in Theorem \ref{thm:Holder}. If there exist $\beta\in (0,1]$ and $A\geq 0$ such that
	\begin{equation}
		\iint_{B_r(y)} |Df|^2 \leq A r^{m-2+2\beta} \quad \text{ for every } y\in B_1 \text{ and almost every } r\in (0,1-|y|),
	\end{equation}
	then for every $0<\delta<1$, there is a constant $C=C(m, \beta, \delta, \gamma)$ such that
	\[ [f]_{C^{0, \beta}(\overline B_\delta)} \leq C \sqrt{A} + C \delta^{1-\beta} \|D \varphi\|_{C^0}\, . \]
\end{lemma}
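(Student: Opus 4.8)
The plan is to adapt the classical Campanato–Morrey argument to the $\left(Q-\frac12\right)$-valued setting, treating separately the sheets that touch the interface and those that do not. First I would reduce to $\varphi\equiv 0$: since $\varphi\in C^1(\gamma)$ extends (via any convenient $C^1$ extension, or a harmonic extension as done later in the paper) to a $C^1$ function $\Phi$ on $B_1$ with $\|D\Phi\|_{C^0}\lesssim\|D\varphi\|_{C^0}$, subtract $\Phi$ from every sheet of $f^+$ and $f^-$; the new map $g=(g^+,g^-)$ has interface $(\gamma,0)$, and its gradient satisfies $\iint_{B_r(y)}|Dg|^2\lesssim Ar^{m-2+2\beta}+\|D\Phi\|_{C^0}^2 r^m$, so the same Morrey-type hypothesis holds with $A$ replaced by $A'\lesssim A+\|D\Phi\|_{C^0}^2$ (after absorbing $r^m\le r^{m-2+2\beta}$ on $B_1$). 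It therefore suffices to prove: under the decay hypothesis for $g$ with interface $(\gamma,0)$, one has $[g]_{C^{0,\beta}(\overline B_\delta)}\le C\sqrt{A'}$, and then undo the reduction, picking up the $C\delta^{1-\beta}\|D\varphi\|_{C^0}$ term from the oscillation of $\Phi$ on $B_\delta$.

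Next, for the reduced problem I would distinguish two cases for a pair of points $x,y$, say both in $\overline{B_\delta^+}$. If the ball $B=B_{2|x-y|}(x)$ stays away from $\gamma$ — more precisely if $2|x-y|<\tfrac12\dist(x,\gamma)$ — then on a slightly smaller ball $f^+$ is an honest $Q$-valued Sobolev map with the Morrey decay, and the interior Campanato–Morrey estimate for $\AQ$-valued functions from \cite{DS} (the analogue of the lemma cited there) gives $\GG(g^+(x),g^+(y))\le C\sqrt{A'}\,|x-y|^\beta$ directly. The genuinely new case is when $x,y$ are comparably far apart relative to their distance to $\gamma$; here I would use a chaining argument through a point $z\in\gamma$ near $x$, reducing everything to the case $x\in\gamma$. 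So the crux is: \emph{estimate the oscillation of $g^\pm$ at a boundary point.} For $x_0\in\gamma\cap B_\delta$ and small $r$, I would control $\GG(g^+(x),g^+(x_0))$ for $x\in B_r^+(x_0)$ by averaging: on the half-circles $\gamma_\rho=\partial B_\rho(x_0)\cap\Omega^+$ the one-dimensional estimate (Proposition \ref{prop:oddecomp}, part (a), applied to the restriction of $g^+$) bounds $\GG(g^+|_{\gamma_\rho})$-oscillation in terms of $\big(\rho\int_{\gamma_\rho}|\partial_\tau g^+|^2\big)^{1/2}$, and since the trace of $g^+$ on $\gamma$ is single-valued and equal to $0$ (because $g^-$ is $(Q-1)$-valued there and $g^+|_\gamma=g^-|_\gamma$ forces all $Q$ sheets of $g^+$ to coincide with the $Q-1$ atoms of $g^-$, hence $\card g^+|_\gamma\le Q-1$; combined with continuity up to $\gamma$ this pins the oscillation), one telescopes over dyadic radii: $\GG(g^+(x),g^+(x_0))\le\sum_{k}\big(\text{osc on }\gamma_{2^{-k}r}\big)\lesssim\sum_k\big(2^{-k}r\int_{\gamma_{2^{-k}r}}|Dg|^2\big)^{1/2}$, and the Morrey hypothesis integrated in $\rho$ makes this a convergent geometric series summing to $C\sqrt{A'}\,r^\beta$.

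The step I expect to be the main obstacle is precisely this boundary oscillation estimate — specifically, making rigorous the claim that the multiplicity of $g^+$ on the interface is controlled and that $g^+$ attains its $\gamma$-trace continuously from $\Omega^+$, so that the telescoping over shrinking half-circles actually closes. This requires care because a priori $g^+$ is only $W^{1,2}$ and its trace on $\gamma$ matches that of a $(Q-1)$-valued map; one must argue that for a.e. $\rho$ the slice $g^+|_{\partial B_\rho(x_0)\cap\Omega^+}$ is in $W^{1,2}$ of the half-circle (Fubini), apply the one-dimensional Sobolev embedding of Proposition \ref{prop:oddecomp}, and then pass to the limit $\rho\to 0$ using the energy decay; a subtlety is that the half-circle has two endpoints on $\gamma$ where $g^+$ agrees with $g^-\pm\llbracket 0\rrbracket$, and one must use both endpoints' values together with the total variation bound to get oscillation control that does not degenerate. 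Once that is in place, the rest is bookkeeping: combine the interior estimate and the boundary estimate via the triangle inequality along a path $x\to z\in\gamma\to y$, sum the geometric series, and finally reinstate $\varphi$ to obtain the stated bound $[f]_{C^{0,\beta}(\overline B_\delta)}\le C\sqrt A+C\delta^{1-\beta}\|D\varphi\|_{C^0}$.
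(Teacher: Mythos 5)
Your reduction to $\varphi\equiv 0$ is fine, and the way you extract the $C\delta^{1-\beta}\|D\varphi\|_{C^0}$ term from the oscillation of the extension is essentially correct. But there is a genuine error in the key boundary step, and the overall route is much harder than the one the paper actually takes.

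The flaw is in your claim that ``the trace of $g^+$ on $\gamma$ is single-valued and equal to $0$,'' justified by saying the matching condition ``forces all $Q$ sheets of $g^+$ to coincide with the $Q-1$ atoms of $g^-$, hence $\card g^+|_\gamma\le Q-1$.'' This is not what the interface condition says. After reducing to $\varphi\equiv 0$, the condition is $g^+|_\gamma = g^-|_\gamma + \llbracket 0\rrbracket$ as atomic measures (mass $Q$ on the left, mass $Q-1$ plus a Dirac at $0$ on the right). The support of the right-hand side can perfectly well have $Q$ distinct points — take $g^+=\llbracket 0\rrbracket+\llbracket c\rrbracket$, $g^-=\llbracket c\rrbracket$ with $c\ne 0$: the interface condition holds, yet $g^+|_\gamma$ is genuinely $2$-valued and not $0$. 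So the cardinality does not drop and the trace is not pinned at a single value, which removes the anchor for your dyadic telescoping. One can still try to salvage the telescoping by controlling only the \emph{sheet at $0$} (which does vanish on $\gamma$) and treating the other sheets via the $(Q-1)$-valued map $g^-$, but that requires exactly the kind of sheet-matching bookkeeping whose absence you flag as the ``main obstacle.''

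The paper avoids this entirely with a reflection trick: define a genuine $Q$-valued map $g$ on \emph{all} of $B_1$ by setting $g=f^+$ on $B_1^+\cup\gamma$ and $g=f^-+\llbracket\phi\rrbracket$ on $B_1^-$, where $\phi$ is the $C^1$ extension of $\varphi$. The matching condition at $\gamma$ makes $g\in W^{1,2}(B_1,\AQ)$, the Morrey decay hypothesis transfers to $g$ with constant $A+\|D\phi\|_{C^0}^2$, and one simply invokes the already-proved $Q$-valued Campanato--Morrey estimate of \cite[Proposition 2.14]{DS}. The H\"older estimate for $f^+$ then falls out immediately because $\mathcal{G}(g(x),g(y))=\mathcal{G}(f^+(x),f^+(y))$ on $B_1^+$; for $f^-$ one needs a short triangle-inequality argument to strip off the artificial $\llbracket\phi\rrbracket$ sheet from the optimal matching, producing the bound $\mathcal{G}(f^-(x),f^-(y))^2\le 2\|D\phi\|_{C^0}^2|x-y|^2+5\,\mathcal{G}(g(x),g(y))^2$. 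This reduction-to-the-known-case is shorter, avoids any boundary telescoping, and sidesteps the trace issue that trips up your argument. If you want to keep your telescoping framework, the right replacement for the false claim is precisely this: compare $g^+$ not to a single value on $\gamma$ but to the $Q$-valued map $g^-|_\gamma+\llbracket 0\rrbracket$, which is what the paper's $Q$-valued extension does implicitly.
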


\begin{proof}
	We first extend $(f^+,f^-)$ to a function $g:B_1 \to \AQ(\RR^n)$ as follows. We use the $C^1$ regularity of $\gamma$ and $\varphi$ to extend $\varphi$ to a $C^1$ function $\phi$ over $B_1$ satisfying the estimate $\|D \phi\|_{C^0(B_1)}\leq C \|D \varphi\|_{C^0(\gamma)}$, where $C$ depends on $m$ and the $C^1$-norm of $\gamma$.
	\begin{equation}\label{eq:extension}
		g(x) := \left\{\begin{array}{ll}
		f^+(x), & x\in B_1^+ \cup \gamma, \\
		f^-(x) + \llbracket \phi (x) \rrbracket, & x\in B_1^-.
	\end{array}\right. 
	\end{equation} 
	Since $f^+|_{\gamma} = f^-|_{\gamma} + \llbracket \varphi \rrbracket$, the function $g$ belongs to $W^{1,2} (B_1, \AQ)$, by the trace theory of \cite{DS}. Moreover, the theory in \cite{DS} can be easily used to prove that 
\[ \iint_{B_r} |Dg|^2 = \iint_{B_r} |Df|^2 + \iint_{B_r^-} |D\phi|^2  \leq r^{m-2+2\beta} \left(A+ r^{2-2\beta} \|D \phi\|_{C^0}^2\right). \] 	
	By the Campanato-Morrey estimate for $Q$-valued functions (see \cite[Proposition 2.14]{DS}), we conclude that 
	\[ \sup_{x,y\in \overline{B_\delta}} \frac{\mathcal{G}(g(x),g(y))}{|x-y|^{\beta}} \leq C \left( \iint_{B_1} |Dg|^2 \right)^{\frac12}\, .\]
Since clearly $\mathcal{G} (g(x), g(y)) = \mathcal{G} (f^+ (x), f^+ (y))$ for every $x,y\in B_1^+$, we conclude the desired esimate on the H\"older continuity of $f^+$. The one for $f^-$ is slightly more subtle. Consider indeed two points $x,y \in B_1^-$. It then turns out that there are $i, j\in \{1, Q-1\}$ and an invertible map $\sigma : \{1, \ldots , Q-1\}\setminus \{j\} \to \{1, \ldots, Q-1\}\setminus \{i\}$ with the property that
\[
\mathcal{G} (g(x), g(y))^2 = |\phi (x) - f^-_i (y)|^2 + |f^-_j (x) - \phi (y)|^2 + \sum_{k\in \{1, \ldots , Q-1\}\setminus \{j\}} |f^-_k (x) - f^-_{\sigma (k)} (y)|^2\, .
\]
Observe therefore that, by the triangle inequality
\[
|f^-_j (x) - f^-_i (y)|\leq |\phi (x) - \phi (y)| + 2 \mathcal{G} (g (x), g (y))\, .
\]
In particular, using the observation
\[
\mathcal{G} (f^- (x), f^- (y))^2 \leq |f^-_j (x) - f^-_i (y)|^2 + \sum_{k\in \{1, \ldots , Q-1\}\setminus \{j\}} |f^-_k (x) - f^-_{\sigma (k)} (y)|^2\, ,
\]
we achieve
\[
\mathcal{G} (f^- (x), f^- (y))^2 \leq 2|\phi (x) - \phi (y)|^2 + 5 \mathcal{G} (g (x), g (y))^2 \leq 2\|D\phi\|_{C^0}^2 |x-y|^2 + 5 \mathcal{G} (g (x), g (y))^2\, .
\]
Combinining the latter inequality with the estimate for $[g]_{C^{0, \beta}}$ we conclude the desired estimate for the H\"older seminorm of $f^-$. 
\end{proof}

\subsection{Almgren's retractions and maximum principle} An important tool in proving the decay of the Dirichlet energy for $Q$-valued minimizers is a family of retraction maps which can be used, for instance, to prove suitable generalizations of the classical maximum principle for harmonic functions. These maps were introduced by Almgren in his pioneering work and we refer to \cite{DS} for an elementary account of them. In order to deal with $\left( Q-\frac{1}{2} \right)$-maps we need an additional property of such retractions, which is not recorded in \cite{DS} (nor in \cite{Alm}). We start by recalling the following notation:

\begin{defn}[Diameter and separation]
	Let $T= \sum_i \llbracket P_i \rrbracket \in \AQ$. The \textit{diameter} and \textit{separation} of $T$ are defined, respectively, as
	\[ d(T):= \max_{i,j} |P_i - P_j| \text{ and } s(T):= \min \{|P_i - P_j|: P_i \neq P_j \}, \]
	with the convention that $s(T) = +\infty$ if $T= Q\llbracket P \rrbracket$.
	\end{defn}
	
	For $Y = \sum_i \a{P_i}$ we denote by $\supp\, (T)$ the set of points $\{P_1, \ldots , P_Q\}\subset \RR^n$. Clearly
	\begin{equation}\label{eq:defmin}
		\dist (\supp (T), q) = \min_i |P_i -q|. 
	\end{equation} 
	We have a triangle inequality
	\begin{equation}\label{eq:trgmin}
		\dist (\supp (T), q) \leq \dist (\supp (S), q) + \GG(T,S), \quad \text{ for every } T, S \in \AQ. 
	\end{equation} 
	
	\begin{lemma}\label{lm:retraction}
		Let $T\in \AQ$ and $r< s(T)/4$. Then there exists a retraction $\vartheta: \AQ \to \overline{B_r(T)}$ such that
		\begin{enumerate}[(i)]
			\item $\GG(\vartheta(S_1), \vartheta(S_2)) < \GG(S_1, S_2)$ if $S_1 \notin \overline{B_r(T)}$,
			\item $\vartheta(S) = S$ for every $S\in \overline{B_r(T)}$,
			\item If a point $q$ belongs to $\supp (T)$ and to $\supp (S)$, then it belongs to $\supp (\vartheta(S))$ too.
		\end{enumerate}
	\end{lemma}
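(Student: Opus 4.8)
The plan is to build $\vartheta$ explicitly from Almgren's classical retraction onto a single ball, applied ``coordinate-wise'' in the atoms. Write $T = \sum_{i=1}^Q \a{P_i}$ and group the $P_i$ according to coincidences, so that the \emph{distinct} points among them are $q_1, \dots, q_L$, with $q_\ell$ appearing with multiplicity $m_\ell$. Since $r < s(T)/4$, the closed balls $\overline{B_r(q_\ell)} \subset \RR^n$ are pairwise disjoint and, in fact, separated by distance at least $2r$. Recall (cf. \cite{DS, Alm}) that for a single point $q \in \RR^n$ there is a $1$-Lipschitz retraction $\rho_{q,r} \colon \RR^n \to \overline{B_r(q)}$ which is the identity on $\overline{B_r(q)}$ and is a \emph{strict} contraction off $\overline{B_r(q)}$ (e.g.\ the nearest-point projection onto the ball, whose strict contractivity outside the ball is elementary), and which moreover fixes $q$. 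The idea is then: given $S = \sum_j \a{R_j} \in \AQ$, for each atom $R_j$ pick the distinct point $q_{\ell(j)}$ nearest to $R_j$ (breaking ties by, say, the smallest index $\ell$), and set $\vartheta(S) := \sum_j \a{\rho_{q_{\ell(j)}, r}(R_j)}$. One checks that this indeed lands in $\overline{B_r(T)}$: by construction every atom of $\vartheta(S)$ sits in some $\overline{B_r(q_\ell)}$, and one must verify the multiplicities can be matched to those of $T$ — this is where the separation hypothesis $r < s(T)/4$ is used, since it forces the partition of the atoms of $S$ by ``nearest $q_\ell$'' to be stable under the small moves $R_j \mapsto \rho(R_j)$ and compatible with an optimal coupling to $T$.

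Granting the construction, property (ii) is immediate: if $S \in \overline{B_r(T)}$ then, by definition of $\mathcal G$ and the separation of the balls $\overline{B_r(q_\ell)}$, there is an optimal matching of $S$ to $T$ sending each atom of $S$ into the ball around the corresponding $q_\ell$, so each atom already lies in the correct $\overline{B_r(q_\ell)}$ and $\rho_{q_\ell,r}$ fixes it; hence $\vartheta(S) = S$. Property (iii) is built in: if $q \in \supp(T)$ then $q = q_\ell$ for some $\ell$, and if also $q \in \supp(S)$, say $R_j = q$, then $q_\ell$ is (one of) the nearest distinct point(s) of $T$ to $R_j$ at distance $0$, the tie-break selects it, and $\rho_{q_\ell,r}(q) = q$ since $q \in \overline{B_r(q_\ell)}$; thus $q \in \supp(\vartheta(S))$. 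For property (i), take $S_1 \notin \overline{B_r(T)}$ and any $S_2$; fix an optimal matching realizing $\mathcal G(S_1, S_2)$, write the atoms accordingly as pairs $(R_j, R_j')$, and estimate
\[
\mathcal G(\vartheta(S_1), \vartheta(S_2))^2 \;\le\; \sum_j \bigl|\rho_{q_{\ell(j)},r}(R_j) - \rho_{q_{\ell(j')},r}(R_j')\bigr|^2,
\]
where the subtlety is that $R_j$ and $R_j'$ may be assigned to \emph{different} nearest points $q_\ell, q_{\ell'}$. To handle this one argues termwise: if $R_j, R_j'$ get the same $q_\ell$, the corresponding term is $\le |R_j - R_j'|^2$ with strict inequality whenever $R_j \notin \overline{B_r(q_\ell)}$; if they get different $q_\ell \ne q_{\ell'}$, then (using $r < s(T)/4$, which puts the two target balls at distance $> 2r$ while the ``separating'' of $R_j$ from $R_j'$ forces $|R_j - R_j'|$ to be comparably large) one still gets $\le |R_j - R_j'|^2$, again strictly unless both points are already in their target balls. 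Since $S_1 \notin \overline{B_r(T)}$, at least one atom $R_j$ of $S_1$ lies outside all $\overline{B_r(q_\ell)}$, producing a strictly contracted term, and summing yields the strict inequality $\mathcal G(\vartheta(S_1),\vartheta(S_2)) < \mathcal G(S_1,S_2)$.

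The main obstacle I anticipate is precisely the bookkeeping in property (i) for the ``mixed'' terms, where the optimal matching between $S_1$ and $S_2$ pairs up atoms steered to different centers $q_\ell$: one needs a clean quantitative lemma of the form ``if $|a - q_\ell| \le |a - q_{\ell'}|$, $|b - q_{\ell'}| \le |b - q_\ell|$, $|q_\ell - q_{\ell'}| > 2r$, and $\rho$ is nearest-point projection onto the respective radius-$r$ balls, then $|\rho(a) - \rho(b)| \le |a - b|$, strictly unless $a,b$ already lie in those balls.'' This should follow from an elementary planar reduction (project onto the line through $q_\ell, q_{\ell'}$ and use monotonicity of the nearest-point retraction along that direction together with $1$-Lipschitzness transverse to it), but it requires care with the radial projection formula; the hypothesis $r < s(T)/4$ — rather than merely $r < s(T)/2$ — gives the extra room that makes all these estimates strict. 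A secondary, more routine point is checking that $\vartheta$ is well defined on the measure-theoretic quotient $\AQ$ (independence of the chosen representative/tie-breaking on the measure-zero set of ambiguous atoms), which follows because the ambiguous set has $\mathcal G$-measure zero and $\vartheta$ is continuous off it by the above estimates.
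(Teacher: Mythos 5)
Your construction does not actually produce a map into $\overline{B_r(T)}$, which is the most basic requirement of the lemma. The "nearest $q_\ell$" partition of the atoms of $S$ is a hypothesis-free assignment that depends only on $S$, while the separation bound $r < s(T)/4$ is a hypothesis on $T$ alone; nothing forces the number of atoms of $S$ assigned to $q_\ell$ to equal the multiplicity $m_\ell$. Concretely, take $T = \a{P_1} + \a{P_2}$ with $|P_1 - P_2| = 5r$ and $S = 2\a{P_1}$. Both atoms of $S$ are nearest $q_1 = P_1$, and the nearest-point projection onto $\overline{B_r(P_1)}$ fixes them, so your $\vartheta(S) = S$. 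But $\GG(S,T) = |P_1 - P_2| = 5r > r$, so $\vartheta(S) \notin \overline{B_r(T)}$. The sentence claiming that the separation hypothesis "forces the partition ... to be ... compatible with an optimal coupling to $T$" is the gap: this is true only when $\GG(S,T) < 2r$ (it is precisely how one proves that such $S$ decompose uniquely relative to $T$), but $\vartheta$ must be defined on all of $\AQ$, including points where the multiplicity structure of $S$ relative to the clusters of $T$ is arbitrary. An atom-by-atom nearest-ball projection cannot recover from such mismatches, since a retraction onto $\overline{B_r(T)}$ must, for far-away $S$, transport whole atoms across the gaps between the $\overline{B_r(q_\ell)}$.

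For the record, the paper's own proof is simply to cite the construction of \cite[Lemma~3.7]{DS}, where $\vartheta$ is built by a linear interpolation between $S$ and $T$ (contracting $S$ towards $T$ by a scalar factor tied to $\GG(S,T)$), not by independent nearest-point projections of atoms onto $\RR^n$-balls. Properties (i)--(ii) are exactly the statement there, and the paper observes that (iii) follows immediately from that explicit formula, since interpolating towards $T$ leaves fixed any atom that already coincides with an atom of $T$. The interpolation approach sidesteps the multiplicity-matching problem entirely because it uses an optimal coupling of $S$ with $T$ from the outset. If you want to repair your argument, you would need to first establish a Lipschitz retraction of all of $\AQ$ onto a small ball around $T$ in which the cluster multiplicities are forced (e.g., $\overline{B_{2r}(T)}$), and only then apply a local, atom-wise map — but that first step is essentially the content of the lemma, so there is no shortcut here.
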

	\begin{proof}
		We define $\vartheta$ in the same way as \cite[Lemma 3.7]{DS}. The properties (i) and (ii) are proved in \cite[Lemma 3.7]{DS} whereas (iii) is an obvious consequence of the explicit formula given in there.
	\end{proof}
	
	\begin{prop}[Maximum principle]
		Let $f\in W^{1,2}(\Omega, \AQt)$ be a Dir-minimizer with interface $(\gamma, 0)$. Suppose $T\in \AQ$, $0\in \supp\, (T)$ and $0<r< s(T)/4$. If
\begin{align}
\GG(f(x), T) \leq r & \qquad\qquad\mbox{for $\mathcal{H}^{m-1}$-a.e. $x\in (\partial\Omega)^+$ and}\label{e:assum1}\\
\GG (f(x) + \a{0}, T)\leq r & \qquad\qquad \mbox{for $\mathcal{H}^{m-1}$-a.e. $x\in (\partial \Omega)^-$,}\label{e:assum2}\, ,
\end{align}
then
\begin{align}
\GG(f, T) \leq r &\qquad\qquad \mbox{a.e. in $\Omega^+$ and}\\
\GG (f + \a{0}, T) \leq r &\qquad\qquad\mbox{a.e. in $\Omega^-$}\, .
\end{align}
\end{prop}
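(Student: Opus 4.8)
The plan is to argue by contradiction, constructing a competitor with strictly smaller Dirichlet energy by composing $f$ with the retraction $\vartheta$ from Lemma \ref{lm:retraction}. Suppose $\GG(f,T)>r$ on a set of positive measure in $\Omega^+$, or $\GG(f+\a{0},T)>r$ on a set of positive measure in $\Omega^-$. Define $g^+:=\vartheta\circ f^+$ on $\Omega^+$ and, on $\Omega^-$, set $g^-:=\vartheta\circ(f^-+\a{0})$ with one sheet stripped off — the key point being that property (iii) of Lemma \ref{lm:retraction} guarantees that, since $0\in\supp(T)$ and (because $f^-+\a{0}$ always contains the atom $\a{0}$) $0\in\supp(f^-(x)+\a{0})$ for a.e.\ $x\in\Omega^-$, we have $0\in\supp(\vartheta(f^-(x)+\a{0}))$, so $\vartheta(f^-+\a{0})=g^-+\a{0}$ for a well-defined $g^-\in W^{1,2}(\Omega^-,\mathcal A_{Q-1})$. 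Then check $g=(g^+,g^-)$ has the right interface: on $\gamma$ we have $f^+=f^-+\a{0}$, hence $f^++\a{0}$ and $f^-+\a{0}$ agree up to… more precisely on $\gamma$, $g^+=\vartheta\circ f^+=\vartheta\circ(f^-+\a{0})=g^-+\a{0}$, so $g$ is a $(Q-\tfrac12)$-map with interface $(\gamma,0)$.

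Next I would verify $g$ is an admissible competitor: by assumptions \eqref{e:assum1}–\eqref{e:assum2} together with property (ii) of Lemma \ref{lm:retraction}, $\vartheta$ fixes $f^+$ on $(\partial\Omega)^+$ and fixes $f^-+\a{0}$ on $(\partial\Omega)^-$, so $g$ agrees with $f$ on $\partial\Omega$; moreover the modification is supported in the compact set where $\GG(f,T)>r$ (or $\GG(f+\a{0},T)>r$), which has closure in $\Omega$. By the strict contraction property (i), $|Dg^+|\le|Df^+|$ a.e.\ and $|Dg^-|=|D(\vartheta\circ(f^-+\a{0}))|\le|D(f^-+\a{0})|=|Df^-|$ a.e.\ (using that adding a constant atom does not change the derivative), with \emph{strict} inequality on a set of positive measure — here I would invoke the standard fact, as in \cite{DS}, that the Lipschitz, strictly-contracting-off-$\overline{B_r(T)}$ map $\vartheta$ composed with a $W^{1,2}$ multifunction that exits $\overline{B_r(T)}$ on a positive-measure set strictly decreases the energy there. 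Hence $\Dir(g,\Omega)<\Dir(f,\Omega)$, contradicting minimality. Therefore $\GG(f,T)\le r$ a.e.\ in $\Omega^+$ and $\GG(f+\a{0},T)\le r$ a.e.\ in $\Omega^-$.

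The main obstacle is the bookkeeping at the interface and in $\Omega^-$: one must be careful that the object $f^-+\a{0}$ really is the right thing to feed to $\vartheta$ (so that the result is a genuine $Q$-valued map whose "extra" atom sits at $0$ and can be removed consistently), and that property (iii) — the new ingredient over \cite{DS} — is exactly what makes $g^-$ well-defined as a $(Q-1)$-valued map. A secondary technical point is to confirm that $\vartheta\circ f^\pm\in W^{1,2}$ with the chain-rule derivative bound; this is routine given the Lipschitz bound on $\vartheta$ and the calculus for $W^{1,2}$ $Q$-valued maps developed in \cite{DS}, so I would simply cite it. One should also note that the hypothesis $0<r<s(T)/4$ is used only to apply Lemma \ref{lm:retraction}, and that $s(f^-(x)+\a{0})$ could in principle be small, but this is irrelevant: $\vartheta$ is a map \emph{on all of} $\AQ$ depending only on $T$ and $r$, so no separation hypothesis on $f$ itself is needed.
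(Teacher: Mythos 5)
Your proof follows essentially the same route as the paper's: argue by contradiction, form the auxiliary $Q$-valued map by adjoining $\a{0}$ on $\Omega^-$, compose with the retraction $\vartheta$ of Lemma \ref{lm:retraction}, invoke property (iii) to strip the zero sheet and obtain a $(Q-\tfrac12)$-valued competitor with interface $(\gamma,0)$, verify trace agreement via property (ii) together with \eqref{e:assum1}--\eqref{e:assum2}, and conclude by strict energy decrease. The one slip is the remark that the modification set $\{\GG(f,T)>r\}$ ``has closure in $\Omega$''---that need not hold, and what you actually use (correctly) is trace agreement on $\partial\Omega$; note also that the paper passes to a subset where $\GG(f,T)>r+\delta$ to make the pointwise strict decrease of the gradient quantitative, a step you deferred to \cite{DS}.
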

	
	\begin{proof}
		We argue by contradiction. Suppose $f\in W^{1,2}(\Omega, \AQt)$ is a Dir-minimizer with interface $(\gamma, 0)$ satisfying \eqref{e:assum1} and \eqref{e:assum2} and assume in addition that there exists a set of positive measure $E\subset \Omega$, such that $f(x) \notin \overline{B_r(T)}$ for every $x\in E\cap \Omega^+$ and $f(x) + \{0\}\notin \overline{B_r (T)}$ for every $x\in E \cap \Omega^-$.
	
In particular there exist $\delta>0$ and a set $E' \subset E$ with positive measure such that $f(x) \notin \overline{B_{r+\delta}(T)}$ for every $x\in E'\cap \Omega^+$ and $f(x)+\a{0}\notin \overline{B_{r+\delta} (T)}$ for every $x\in E'\cap \Omega^-$. As in the proof of Lemma \ref{lm:CM} we consider the $Q$-valued function on $\Omega$ which coincides with $f^+$ on $\Omega^+$ and with $f+\a{0}$ in $\Omega^-$. Let $\vartheta: \AQ \to \overline{B_r(T)} $ be the retraction operator in Lemma \ref{lm:retraction}. By (iii) $\supp (\vartheta\circ g(x))$ contains the origin for every $x\in \Omega^-$. We can thus consider the $(Q-1)$-valued function on $\Omega^-$ given by $\vartheta \circ g - \a{0}$. If we set $h^+ = \vartheta \circ g$ on $\Omega^+$ we then get a $(Q-\frac{1}{2})$-valued map $(h^+, h^-)$ with interface $(\gamma, 0)$. By Lemma \ref{lm:retraction}(ii) we also know that $h^\pm = f^\pm$ on $(\partial \Omega)^\pm$. Therefore $h= (h^+, h^-)$ is a suitable competitor for $f=(f^, f^-)$. On the other hand, by Lemma \ref{lm:retraction} (i) we know $|D(\vartheta \circ f)| \leq |Df|$ a.e. on $\Omega$ and moreover, recalling the definition of $\vartheta$ by linear interpolation and that $\GG(f(x), T) > r+\delta$, we get that 
		\begin{equation}\label{eq:strineq}
			|D (\vartheta \circ f)| \leq t_0 |Df| < |Df| \quad \text{ a.e. on } E', 
		\end{equation} 
		where $t_0 \leq \frac{r-\delta}{r+\delta} < 1$. Here we compute the partial derivatives by the first order approximation, see the definition and discussions in Definition 1.9, Corollary 2.7 and Proposition 2.17 of \cite{DS}.
		We conclude that $\Dir(h, \Omega) < \Dir(f, \Omega)$, contradicting the minimality of $f$.
	\end{proof}
	
\subsection{Decomposition} The maximum principle of the previous section triggers a decomposition lemma for Dir-minimizers with $(\gamma, 0)$ interface.

	\begin{prop}[Decomposition of $(Q-\frac{1}{2})$-valued Dir-minimizers]\label{prop:dcp}
		There exists a positive constant $\alpha(Q)>0$ with the following property. Assume that $f\in W^{1,2}(\Omega, \AQt)$ is a Dir-minimizer with interface $(\gamma, 0)$, and that there exists $T\in \AQ$ with $0\in \supp\, (T)$ such that  \eqref{e:assum1} and \eqref{e:assum2} hold with $r= \alpha (Q) f (T)$.
		Then there exists a decomposition $f = (f^+, f^-) = (g^++h, g^- +h)$, where $h$ is a $Q_1$-valued Dir-minimizer, $(g^+, g^-)$ a $(Q_2-\frac{1}{2})$ Dir-minimizer with interface $(\gamma, 0)$, $Q_1+Q_2=Q$ and $1\leq Q_1 \leq Q-1$.
	\end{prop}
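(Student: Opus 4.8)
\emph{Proof plan.} The engine is the Maximum Principle just proved: I would use it to confine $f$ to a small neighbourhood of the cluster structure of $T$ and then simply read off the decomposition. Set $r=\alpha(Q)\,s(T)$ and choose $\alpha(Q)$ small enough (e.g. $\alpha(Q)\le\frac18$) that $r<\frac14 s(T)$. Consider the auxiliary $Q$-valued map $g$ on $\Omega$ defined, exactly as in the proofs of Lemma~\ref{lm:CM} and of the Maximum Principle, by $g=f^+$ on $\Omega^+$ and $g=f^-\oplus\a{0}$ on $\Omega^-$; since the interface condition $f^+|_\gamma=f^-|_\gamma+\a{0}$ is precisely the matching of the traces of $g$ across $\gamma$, one has $g\in W^{1,2}(\Omega,\AQ)$. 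The Maximum Principle applied to $f$ with this $T$ and this $r$ then gives $\GG(g(x),T)\le r$ for a.e. $x\in\Omega$.

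Next I would invoke the cluster decomposition from the theory of $Q$-valued maps. Write $T=\bigoplus_{j=1}^{k} m_j\a{y_j}$ with the $y_j$ distinct, so $\min_{i\ne j}|y_i-y_j|=s(T)$. The bound $r<\frac14 s(T)$ forces a canonical splitting $g=\bigoplus_{j=1}^{k}g_j$ with $g_j\in W^{1,2}(\Omega,\mathcal A_{m_j})$ and $\GG(g_j(x),m_j\a{y_j})\le r$ a.e.; since the targets $\{m_j\a{y_j}\}$ are mutually separated by more than $\frac12 s(T)$, the Dirichlet densities add, $|Dg|^2=\sum_j|Dg_j|^2$ a.e., and the trace operator commutes with this splitting. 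Because $0\in\supp(T)$ exactly one $y_j$, say $y_1$, equals $0$; for $j\ne1$ we have $|y_j|=|y_j-y_1|\ge s(T)>4r$, so the atom $\a{0}$ that was added on $\Omega^-$ must belong to the cluster $g_1$. Consequently, on $\Omega^-$ the map $f^-$ itself splits as $f^-=(g_1|_{\Omega^-}\ominus\a{0})\oplus\bigoplus_{j\ge2}g_j|_{\Omega^-}$, with $g_1|_{\Omega^-}\ominus\a{0}\in W^{1,2}(\Omega^-,\mathcal A_{m_1-1})$ and the energy still additive (removing the constant atom $\a 0$ does not change the Dirichlet density). If $k=1$, i.e. $T=Q\a{0}$, the statement is trivial: the Maximum Principle then forces $f^+\equiv Q\a{0}$ and $f^-\equiv(Q-1)\a{0}$, which obviously admit the desired decomposition; so assume $k\ge2$.

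Now I would build the decomposition. Pick any index $j_0$ with $y_{j_0}\ne0$, and set $h:=g_{j_0}\in W^{1,2}(\Omega,\mathcal A_{Q_1})$ with $Q_1:=m_{j_0}$, together with $Q_2:=Q-Q_1$, $g^+:=\bigoplus_{j\ne j_0}g_j|_{\Omega^+}$ and $g^-:=(g_1|_{\Omega^-}\ominus\a{0})\oplus\bigoplus_{j\ne1,j_0}g_j|_{\Omega^-}$. Then $g^+$ is $Q_2$-valued, $g^-$ is $(Q_2-1)$-valued, $f^\pm=g^\pm\oplus h$ on $\Omega^\pm$, and, because the trace commutes with the splitting and $g^+|_\gamma$, $g^-|_\gamma$ arise from the clusters of the same $g_j$'s viewed from the two sides, one gets $g^+|_\gamma=g^-|_\gamma\oplus\a{0}$; the bounds $1\le Q_1\le Q-1$ come from $m_{j_0}\ge1$ and $m_1\ge1$. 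It remains to see $h$ and $(g^+,g^-)$ are themselves minimizers. Here I would use the subadditivity of the Dirichlet energy under superposition, $\Dir(u\oplus v,U)\le\Dir(u,U)+\Dir(v,U)$ for Sobolev $Q$-valued maps, which follows from the approximate differentiability theory of \cite{DS}. Combined with the separated-cluster identity this gives $\Dir(f,\Omega)=\Dir((g^+,g^-),\Omega)+\Dir(h,\Omega)$. If $h'\in W^{1,2}(\Omega,\mathcal A_{Q_1})$ agreed with $h$ off a compact $K\Subset\Omega$ with strictly smaller energy, then $(g^+\oplus h',g^-\oplus h')$ would be an admissible competitor for $f$ — same interface $(\gamma,0)$, since $g^+|_\gamma\oplus h'|_\gamma=(g^-|_\gamma\oplus h'|_\gamma)\oplus\a{0}$, and equal to $f$ off $K$ — with $\Dir\le\Dir((g^+,g^-),\Omega)+\Dir(h',\Omega)<\Dir(f,\Omega)$, contradicting the minimality of $f$; symmetrically one rules out improving $(g^+,g^-)$.

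The only non-bookkeeping point — what I expect to be the main obstacle to state cleanly — is making the three $Q$-valued calculus facts precise in exactly the form needed: that the separation $r<\frac14 s(T)$ produces a genuinely $W^{1,2}$ cluster decomposition of $g$ with additive Dirichlet energy, that the trace operator commutes with that decomposition (so the interface condition $(\gamma,0)$ is inherited by $(g^+,g^-)$), and that superposition is subadditive in energy (so competitors for $h$ and for $(g^+,g^-)$ lift to competitors for $f$). All three are contained in, or immediate consequences of, the theory of \cite{DS}; once they are in hand the proof reduces to tracking which cluster absorbs the extra atom $\a{0}$ on $\Omega^-$.
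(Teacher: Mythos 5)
The key problem is your choice $r=\alpha(Q)\,s(T)$ in the very first line. The proposition's ``$r=\alpha(Q)\,f(T)$'' is a typo, but it must be read as $r=\alpha(Q)\,d(T)$ (diameter), not $\alpha(Q)\,s(T)$ (separation): if it were the separation, then for $T=Q\llbracket 0\rrbracket$ one has $s(T)=+\infty$ by the paper's convention, the hypothesis becomes vacuous, and the claimed conclusion would assert that \emph{every} Dir-minimizer with interface $(\gamma,0)$ decomposes nontrivially --- which is false (e.g.\ the irreducible $\tfrac{3}{2}$-valued example of Lemma~\ref{l:char_irr_hom}(ii)). Your treatment of the $k=1$ case betrays the inconsistency: you assert that the Maximum Principle forces $f\equiv T$, but that requires $r=0$, i.e.\ $r=\alpha(Q)\,d(T)$ with $d(T)=0$, whereas your declared $r=\alpha(Q)\,s(T)=+\infty$ gives no constraint whatsoever.

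Once one reads the radius as $\alpha(Q)\,d(T)$, the argument splits into two regimes, and you have only covered one. When $\alpha(Q)\,d(T)<s(T)/4$, your cluster decomposition of the auxiliary $Q$-valued map $g$, the trace matching across $\gamma$, and the subadditivity/competitor argument for separate minimality are all correct and do amount to a careful write-up of the paper's one-sentence ``easy case.'' But the ratio $d(T)/s(T)$ can be arbitrarily large, so for no fixed $\alpha(Q)$ does $\alpha(Q)\,d(T)<s(T)/4$ always hold. In the complementary regime $4\alpha(Q)\,d(T)\geq s(T)$ the direct cluster decomposition is unavailable, and the paper instead applies the collapse Lemma 3.8 of \cite{DS} to $T$ to obtain $S=\sum_j k_j\llbracket S_j\rrbracket$ with $\beta(\epsilon,Q)\,d(T)\leq s(S)<+\infty$ and $\GG(S,T)\leq\epsilon\,s(S)$; it then relocates the atom of $S$ nearest to the origin to $0$ to form $\widetilde S$, verifies $s(\widetilde S)\geq(1-\epsilon)s(S)$ and $\GG(f,\widetilde S)\leq s(\widetilde S)/8$ via the triangle inequality and a judicious choice of $\epsilon$ and $\alpha(Q)=\epsilon\beta(\epsilon,Q)$, and finally applies the Maximum Principle with $\widetilde S$ in place of $T$. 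This reduction to a ``collapsed'' target with controlled separation is the essential content of the proposition, and it is absent from your proposal.
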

	
	\begin{proof}
		When $d(T) = 0$, our assumption implies $\GG(f(x), T) = 0$, namely $f \equiv T$, and there is nothing to prove. So we assume $d(T)>0$.
		If $\alpha(Q) d(T) < s(T)/4$ (for a fixed value of $\alpha(Q)$), the proposition follows directly by the maximum principle and the definition of $s(T)$. Suppose therefore $4 \alpha (Q) d(T) \geq s(T)$. We fix a positive real number $\epsilon$ so that
		\[ \left( \sqrt{Q}+2 \right) \frac{\epsilon}{1-\epsilon} = \frac{1}{8}. \]
		Recalling \cite[Lemma 3.8]{DS}, we may \textit{collapse} some points in the support $T$ and find an element $S= \sum_{j=1}^J k_j \llbracket S_j \rrbracket \in \AQ$ (with $J\geq 2$) satisfying
		\begin{equation}
			\beta(\epsilon, Q) d(T) \leq s(S) < +\infty,
		\end{equation}
		\begin{equation}\label{tmp:dcpQ2}
			\GG(S,T) \leq \epsilon s(S).
		\end{equation}
		We set $\alpha(Q) = \epsilon \beta(\epsilon, Q)$, so that
		\begin{equation}\label{tmp:dcpQ1}
			\GG(f(x), T) \leq \alpha(Q) d(T)\leq \epsilon s(S) \quad \text{ for } \mathcal{H}^{m-1} \text{-a.e. } x\in \partial\Omega. 
		\end{equation} 
		Since $0\in \supp (T)$, we have, by the triangle inequality \eqref{eq:trgmin},
		\[ \dist (\supp\, (S), 0) \leq \min \dist (\supp\, (T), 0) + \GG(S, T) \leq \epsilon s(S). \]
		Without loss of generality, we assume $|S_1| = \dist (\supp (S), 0)$. Let $\widetilde S = k_1 \llbracket 0 \rrbracket + \sum_{j=2}^J k_j \llbracket S_j \rrbracket$. Clearly
		\begin{equation}\label{tmp:dcpQ3}
			\GG(S, \widetilde S) = \sqrt{k_1|S_1|^2} \leq \sqrt{Q} \min S \leq \epsilon \sqrt{Q} \, s(S). 
		\end{equation} 
		On the other hand $s(\widetilde S) \geq (1-\epsilon) s(S)$. In fact, either $s(\widetilde S) = |S_i - S_j|$ for some $i, j \neq 1$, in which case $s(\widetilde S) \geq s(S)$; or $s(\widetilde S) = |S_i|$ for some $i \neq 1$, and then
		\begin{equation}\label{tmp:dcpQ4}
			s(\widetilde S) =  |S_i| \geq |S_i - S_1| - |S_1| \geq s(S) - \epsilon s(S). 
		\end{equation} 
		Combining \eqref{tmp:dcpQ1}, \eqref{tmp:dcpQ2}, \eqref{tmp:dcpQ3}, \eqref{tmp:dcpQ4} and the choice of $\epsilon$, we conclude
		\begin{align*}
			\GG(f(x), \widetilde S) \leq \GG(f(x), T) + \GG(S, T) + \GG(S, \widetilde S) & \leq \epsilon s(S) + \epsilon s(S) + \epsilon \sqrt{Q} \, s(S) \\
			& \leq \left( \sqrt{Q} + 2 \right) \frac{\epsilon}{1-\epsilon} \, s(\widetilde S)
			 = \frac18 s(\widetilde S),
		\end{align*}
		for $\mathcal{H}^{m-1}$-a.e. $x\in \partial\Omega$. Again it follows by the maximum principle that $\GG(f, \widetilde S) \leq s(\widetilde S)/8$ almost everywhere on $\Omega$. We thus have a decomposition of $f$ into simpler multiple-valued functions.
	\end{proof}

\subsection{Interpolation that preserves the interface value}

In this subsection, we construct interpolations between pairs of $(Q-\frac{1}{2})$ maps with a common interface $(\gamma, 0)$ defined on concentric spheres and estimate its Dirichlet energy. Later we will use the interpolation to construct competitors for Dir-minimizing maps, so it is crucial that the interpolation has the same interface $(\gamma, 0)$. This is also the major difference from the interior case, proved in \cite[Lemma 2.15]{DS}. For our current purpose, namely the proof of the decay of the Dirichlet energy for minimizers, we actually need the existence of the interpolation only in the case $m\geq 3$. However later on Lemma \ref{lm:interpolation} will be used on planar maps to show the compactness of minimizers, a crucial point in the proof of Theorem \ref{thm:main}. We therefore state and proof also the $2$-dimensional case (separately). 

\begin{lemma}[Interpolation when $m=2$]\label{lm:interpolation}
	Let $f, g$ be maps in $ W^{1,2}(\partial B_1, \AQt(\RR^n))$ satisfying 
	\begin{equation}\label{eq:tmpbd2}
		f^+|_{\gamma} = f^-|_{\gamma} + \llbracket 0 \rrbracket, \quad g^+|_{\gamma} = g^-|_{\gamma} + \llbracket 0 \rrbracket, 
	\end{equation} 
	and $\sup_{x\in \partial B_1} \GG(f(x), g(x)) < +\infty $. Let $\delta= \frac{1}{N}$ for some $N\in \mathbb N\setminus \{0,1,2,3\}$. Then there exists $h\in W^{1,2}(B_1\setminus B_{1-\delta}, \AQt(\RR^n))$ satisfying $h^+|_{\gamma} = h^-|_{\gamma} + \llbracket 0 \rrbracket$ and
	\[ h(x) = f(x) \text{ for } x\in \partial B_1, \quad h(x) = g \left( \frac{1}{1-\delta}x \right) \text{ for } x\in \partial B_{1-\delta}. \]
	Moreover
	\begin{equation}\label{eq:interpolation2}
		\Dir(h, B_1 \setminus B_{1-\delta}) \leq C\delta \Dir(f, \partial B_1) + C\delta \Dir(g, \partial B_1) + \frac{C}{\delta} \sup_{x\in \partial B_1} \GG(f(x), g(x)).
	\end{equation}
\end{lemma}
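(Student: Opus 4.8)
The plan is to reduce the $(Q-\frac12)$-valued interpolation to the known $Q$-valued interpolation of \cite[Lemma 2.15]{DS} in a way that respects the interface condition. First I would reparametrize $\partial B_1 = \So$ by the standard angle $\theta \in [0,2\pi)$, with the interface points corresponding to $\theta = 0$ and $\theta = \pi$. The key idea is to "unroll" the two maps $f,g$ to single-valued maps on a larger circle, as in Lemma \ref{lm:unrolling} and Proposition \ref{prop:decomp}: after subtracting (extensions of) the zero interface there is nothing to subtract, so I would use Proposition \ref{prop:oddecomp} to choose measurable selections $f_1^{\pm},\dots$ and $g_1^{\pm},\dots$ on the two arcs, and then splice them, using the interface conditions \eqref{eq:tmpbd2}, into $W^{1,2}$ maps $\zeta_f, \zeta_g : \So \to \RR^n$ (reparametrizing the circle so that the $2Q-1$ "sheets" — $Q$ on the upper arc, $Q-1$ on the lower — line up around one full turn), with the property that $\zeta_f(0) = \zeta_g(0) = 0$ and $\zeta_f(2\pi) = \zeta_g(2\pi) = 0$ because each chain of sheets starts and ends at the origin along the interface (this is exactly the unwinding picture of \eqref{eq:unwind1}--\eqref{eq:unwind2} with $\varphi \equiv 0$). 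The subtlety here is that such a global selection/unwinding need not exist if $f$ or $g$ fails to be irreducible; so I would first apply Proposition \ref{prop:decomp} to write $f = \sum_k f^{(k)}$ and $g = \sum_\ell g^{(\ell)}$ as sums of irreducible pieces, handle each irreducible $(Q_0-\frac12)$-piece (and each ordinary irreducible $Q_j$-piece) separately, and add the resulting interpolations. For an ordinary irreducible $Q_j$-valued piece not touching the interface this is literally \cite[Lemma 2.15]{DS}; for the irreducible $(Q_0-\frac12)$-piece we are in the unwound situation just described.

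In the unwound situation, $\zeta_f$ and $\zeta_g$ are single-valued $W^{1,2}(\So,\RR^n)$ maps that \emph{agree at $\theta=0$ and $\theta = 2\pi$} (both equal $0$). I would then linearly interpolate between $\zeta_f$ and $\zeta_g$ on the annulus $B_1 \setminus B_{1-\delta}$ in the radial variable — the classical one-dimensional interpolation — but done so that the point $\theta = 0 \equiv 2\pi$ is \emph{fixed equal to $0$ throughout the annulus}. Concretely, writing $\delta = 1/N$, I subdivide the angular circle into $N$ arcs and on each radial layer interpolate over one arc, in the spirit of the proof of \cite[Lemma 2.15]{DS}; because $\zeta_f(\{\theta=0\}) = \zeta_g(\{\theta = 0\}) = 0$, the interpolation can be arranged to keep $\zeta_h(r,0) = 0$ for all $r \in [1-\delta,1]$, which after rewinding translates precisely into $h^+|_\gamma = h^-|_\gamma + \a{0}$ on the slit. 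This is the mechanism that forces the interface condition to be preserved, and it is the only genuinely new point compared with the interior case.

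The energy bound follows by tracking constants through the unrolling. By the conformal-invariance computation in Lemma \ref{lm:unrolling}, $\Dir(f,\partial B_1)$ is comparable (up to the factor $\tfrac{2}{2Q-1}$, which is absorbed into $C=C(Q)$) to $\int_{\So}|\partial_\tau \zeta_f|^2$, and similarly for $g$; moreover $\sup \GG(f,g)$ controls $\sup|\zeta_f - \zeta_g|$ up to a dimensional constant (a selection of a $\GG$-distance realizing matching). Applying the standard single-valued annular interpolation estimate to $\zeta_f,\zeta_g$ gives
\[
\iint_{\mathbb D\setminus \mathbb D_{1-\delta}} |D\zeta_h|^2 \leq C\delta \int_{\So}|\partial_\tau \zeta_f|^2 + C\delta \int_{\So}|\partial_\tau \zeta_g|^2 + \frac{C}{\delta}\sup_{\So}|\zeta_f - \zeta_g|^2 ,
\]
and then rewinding via Lemma \ref{lm:unrolling} (the conformal rescaling $r \mapsto r^{2/(2Q-1)}$ multiplies the annular energy only by a bounded factor depending on $Q$, and maps $\mathbb D \setminus \mathbb D_{(1-\delta)^{(2Q-1)/2}}$ onto the target annulus, whose outer/inner radii are comparable to $1$ and $1-\delta$ with $\delta$-independent distortion) yields \eqref{eq:interpolation2}; the contributions of the finitely many ordinary $Q_j$-pieces are estimated directly by \cite[Lemma 2.15]{DS} and added.

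The main obstacle I expect is bookkeeping rather than a conceptual difficulty: making the reduction to irreducible pieces clean (ensuring the selections on the upper and lower arcs glue \emph{consistently} at both interface points so that the unwound $\zeta_f,\zeta_g$ are genuinely $W^{1,2}$ with the claimed boundary values $0$), and verifying that the radial rescaling $r\mapsto r^{2/(2Q-1)}$ does not spoil the $\delta$-dependence in \eqref{eq:interpolation2} — i.e. that it maps a shell of thickness $\sim\delta$ to a shell of thickness $\sim\delta$ with constants depending only on $Q$. Both are routine once set up carefully, but they are where the proof has to be written with care.
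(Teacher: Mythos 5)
The core idea of your proposal---decomposing $f$ and $g$ into irreducible pieces, unwinding each piece to a single-valued map, and interpolating the single-valued unwindings---has a genuine gap that cannot be repaired by bookkeeping. The irreducible decompositions of $f$ and $g$ are unrelated to each other, so there is no canonical (or even well-defined) way to pair the piece $f^{(k)}$ of $f$ with a piece $g^{(\ell)}$ of $g$. For example, $f$ could be irreducible as a full $(Q-\tfrac12)$-valued map on $\So$ while $g^+ \equiv Q\a{0}$, $g^-\equiv (Q-1)\a{0}$, which decomposes into $2Q-1$ trivial single-valued pieces; the hypothesis $\sup \GG(f,g)<+\infty$ is automatic for $W^{1,2}$ maps on a circle and does not rule this out. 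Piece-by-piece interpolation is therefore not even defined in general, and the scheme cannot get off the ground.

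Even in the favorable case where both $f$ and $g$ are, say, irreducible $(Q_0-\tfrac12)$-valued, your claim that ``$\sup\GG(f,g)$ controls $\sup|\zeta_f-\zeta_g|$ up to a dimensional constant (a selection of a $\GG$-distance realizing matching)'' is not correct: the unwindings $\zeta_f,\zeta_g$ are \emph{fixed} selections determined by the topology of the sheets, while the optimal matching realizing $\GG(f(x),g(x))$ varies with $x$ and need not respect those selections. A small $\sup\GG(f,g)$ does not force $|\zeta_f-\zeta_g|$ to be small on the unwound circle unless $\sup\GG(f,g)$ is already small compared to the sheet separation of $f$, which is not assumed. The paper sidesteps both obstructions by never separating $f$ and $g$ into pieces: it builds a cubical grid on $\partial B_1^\pm\times[0,\delta]$, interpolates on the interior $1$-skeleton via Almgren's Lipschitz embedding--retraction pair $\bxi,\brho$ (which handles arbitrary $Q$-tuples regardless of their structure), and only at the two interface columns uses the constraint \eqref{eq:tmpbd2} to pin a $\a{0}$ sheet and interpolate the remaining $Q-1$ values by a pointwise-optimal linear matching; the squares are then filled in using the planar case of \eqref{eq:Dirbd}. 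Your instinct that the new point is to keep the interface value pinned to $0$ through the annulus is right, but it must be implemented without splitting the maps.
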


\begin{proof} By applying a diffeomorphism, we can assume that $\gamma = \mathbb R$. 
	We first interpolate $f^+$ and $g^+$ in the upper half annulus $B_1^+\setminus B_{1-\delta}^+$. After parametrizing a biLipschitz diffeomorphism $\phi: [0,1] \to \partial B_1^+$ to the functions $f^+$ and $g^+$, we may assume $f^+, g^+$ are $W^{1,2}$ maps defined on $[0,1]$. We will interpolate $f^+$ and $g^+$ and get a $W^{1,2}$ map on $[0,1]\times [0,\delta]$. 
	
	We define a cubical decomposition $D_i = [i\delta, (i+1)\delta]\times [0,\delta]$ with $i=0,1, \cdots, N-1$, and vertical lines $\ell_i = \{ i\delta\} \times [0,\delta]$ with $i=0, 1, \cdots, N$. For $i=1,2, \cdots, N-1$, we define
	\[ h(x,t)= \bxi^{-1} \circ \brho \left( \left(1-\frac{t}{\delta} \right) \bxi \circ g^+(x) + \frac{t}{\delta} \bxi \circ f^+(x) \right), \quad (x,t) \in \ell_i,  \]
	where $\bxi: \AQ(\RR^n) \to \RR^N$ is the embedding of $Q$-valued metric space, and $\brho: \RR^N \to \bxi( \AQ)$ is the retraction, see \cite[Theorem 2.1]{DS}. It is clear that
	\begin{equation}
		\left| Dh(x,t) \right| \leq \frac{C}{\delta} \GG(g^+(x), f^+(x)),
	\end{equation}
	where the constant depends on the Lipschitz constants of $\bxi$ and $\brho$. 
	For $i=0$ or $N$ and $x=i\delta$, by recalling \eqref{eq:tmpbd2} we denote 
	\begin{equation}\label{eq:tmpbdexplicit}
		g^+(x) = \sum_{j=1}^Q \llbracket a_j\rrbracket = \llbracket 0 \rrbracket + g^-(x), \quad f^+(x) = \sum_{j=1}^Q \llbracket b_j\rrbracket = \llbracket 0 \rrbracket + f^-(x). 
	\end{equation} 
	Here we assume $a_1 = b_1 = 0$ without loss of generality.
	Suppose $\tau$ is a permutation of $\{2, \cdots, Q\}$ such that
	\[ \GG(g^-(x), f^-(x)) = \sqrt{\sum_{j=2}^Q |a_j - b_{\tau(j)}|^2 }. \]
	We define
	\begin{equation}\label{eq:tmp}
		h(x,t) = \llbracket 0 \rrbracket + \sum_{j=2}^Q \left \llbracket \left( 1- \frac{t}{\delta} \right) a_j + \frac{t}{\delta} b_{\tau(j)}  \right\rrbracket.
	\end{equation}
	\eqref{eq:tmpbdexplicit} implies that
	\begin{equation}\label{eq:chgaftzero}
		\GG(g^+(x), f^+(x)) \leq \GG(g^-(x), f^-(x)) \leq \sqrt{2} \GG(g^+(x), f^+(x)). 
	\end{equation} 
	Hence
	\begin{equation}
		\left|Dh(x,t) \right| = \frac{1}{\delta} \sqrt{\sum_{j=2}^Q |a_j - b_{\tau(j)}|^2 } = \frac{1}{\delta} \GG(g^-(x), f^-(x)) \leq \frac{\sqrt{2}}{\delta} \GG(g^+(x), f^+(x)).
	\end{equation}
	
	In this way $h$ is well-defined for each $\partial D_i$. We now wish to use \eqref{eq:Dirbd} (and a biLipschitz homeomorphism of squares to disks) and claim the existence of an extension $h$ on $D_i$ satisfying
	\begin{equation}
		\Dir(h, D_i) \leq C \delta \Dir(h, \partial D_i).
	\end{equation}
Note that this can be done because the proof of \eqref{eq:Dirbd} given later in the planar case is {\em not} using the current proposition (it uses interpolation, however, if the domain is at least $3$-dimensional).
	Summing up we get
	\begin{align*}
		\Dir(h, [0,1]\times [0,\delta]) & = \sum_{i=0}^{N-1} \Dir(h, D_i) \\
		 & \leq C \delta \left(  \Dir(h, [0,1]\times \{0\}) + \Dir(h,[0,1] \times \{\delta\}) + \sum_{i=0}^N \Dir(h, \ell_i) \right) \\
		 & \leq C\delta \Dir(g,[0,1]) + C\delta\Dir(f,[0,1]) + C \sum_{i=0}^N \GG(g^+( i \delta), f^+(i\delta)) \\
		 & \leq C\delta \Dir(g,[0,1]) + C\delta\Dir(f,[0,1]) + \frac{C}{\delta} \sup_{x\in [0,1]} \GG(g^+( x), f^+(x)).
	\end{align*}
	Applying the biLipschitz homeomorphism $\phi:[0,1] \to \partial B_1^+$, we get an interpolation $h^+ \in W^{1,2}(B_1^+ \setminus B_{1-\delta}^+, \AQ)$.
	
	Similarly, we define an interpolation $h^- \in W^{1,2}(B_1^-\setminus B_{1-\delta}^+, \mathcal{A}_{Q-1})$ between $g^-$ and $f^-$. By \eqref{eq:tmpbd} and the construction \eqref{eq:tmp}, we know $h^+|_{\gamma} = h^-|_{\gamma} + \llbracket 0 \rrbracket$, $h\in W^{1,2}(B_1 \setminus B_{1-\delta}, \AQt)$ and moreover
	\begin{equation}
		\Dir(h, B_1\setminus B_{1-\delta}) \leq C\delta \Dir(g, B_1) + C\delta\Dir(f, B_1) + \frac{C}{\delta} \sup_{x\in \partial B_1} \GG(g(x), f(x)).
	\end{equation}
\end{proof}

\begin{lemma}[Interpolation when $m\geq 3$]\label{lm:interpolation}
	Let $f, g$ be maps in $ W^{1,2}(\partial B_1, \AQt(\RR^n))$ satisfying 
	\begin{equation}\label{eq:tmpbd}
		f^+|_{\gamma} = f^-|_{\gamma} + \llbracket 0 \rrbracket, \quad g^+|_{\gamma} = g^-|_{\gamma} + \llbracket 0 \rrbracket, 
	\end{equation} 
	and $\int_{ \partial B_1} \GG(f, g) < +\infty $. Let $\delta= \frac{1}{N}$ for some $N\in \mathbb N\setminus \{0,1,2,3\}$. Then there exists $h\in W^{1,2}(B_1\setminus B_{1-\delta}, \AQt(\RR^n))$ satisfying $h^+|_{\gamma} = h^-|_{\gamma} + \llbracket 0 \rrbracket$ and
	\[ h(x) = f(x) \text{ for } x\in \partial B_1, \quad h(x) = g \left( \frac{1}{1-\delta}x \right) \text{ for } x\in \partial B_{1-\delta}. \]
	Moreover
	\begin{equation}\label{eq:interpolation}
		\Dir(h, B_1 \setminus B_{1-\delta}) \leq C\delta \Dir(f, \partial B_1) + C\delta \Dir(g, \partial B_1) + \frac{C}{\delta} \int_{ \partial B_1} \GG(f, g).
	\end{equation}
\end{lemma}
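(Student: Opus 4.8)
The plan is to build $h$ by mimicking the interior interpolation of \cite[Lemma 2.15]{DS} and the $m=2$ case treated above, the only genuinely new point being to enforce $h^+|_\gamma = h^-|_\gamma + \a{0}$ in a neighbourhood of $\gamma$. First I would reduce to the model situation $\gamma = \RR$ by a bi-Lipschitz diffeomorphism, exactly as in the $m=2$ argument; this only affects the constants. I would then parametrize the upper half-annulus $B_1^+\setminus B_{1-\delta}^+$ by a product $\mathcal{P}\times[0,\delta]$ through a bi-Lipschitz chart, chosen so that $\mathcal{P}\times\{\delta\}$ corresponds to $(\partial B_1)^+$, $\mathcal{P}\times\{0\}$ to $(\partial B_{1-\delta})^+$, and the part of the half-annulus lying on $\gamma$ to $\partial\mathcal{P}\times[0,\delta]$; under this chart $f^+$ and $g^+$ become $W^{1,2}$ maps $\mathcal{P}\to\AQ$.

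Away from $\gamma$ the construction is the interior one. I would decompose $\mathcal{P}$ into cubes of side comparable to $\delta$, cross them with $[0,\delta]$ to obtain $m$-dimensional cells of size $\sim\delta$, set $h$ on the horizontal faces of the resulting skeleton equal to $f$ and $g$ (rescaled), on the vertical faces equal to the radial interpolation $\bxi^{-1}\circ\brho\big((1-\tfrac{t}{\delta})\,\bxi\circ g^+ + \tfrac{t}{\delta}\,\bxi\circ f^+\big)$ of \cite[Theorem 2.1]{DS}, and then fill each cell using the extension estimate \eqref{eq:Dirbd}, which gains a factor $\sim\delta$ per cell. It is essential here that $m\ge 3$: the proof of \eqref{eq:Dirbd} in this range uses the interior interpolation of \cite{DS} and not the present statement, so there is no circularity (the $m=2$ case has its own independent proof of \eqref{eq:Dirbd}). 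Choosing the decomposition grid generically (a Fubini argument in the spirit of Almgren's constructions) one guarantees that the restrictions of $f^+,g^+$ to the lower-dimensional skeleton are of class $W^{1,2}$ and that the corresponding energies are controlled by $\delta^{-1}$ times the energies on $\mathcal{P}$. Summing the cell contributions against the skeleton energy, together with the symmetric construction on the lower half-annulus ($\mathcal{A}_{Q-1}$-valued, interpolating $f^-$ and $g^-$), produces the terms of \eqref{eq:interpolation}.

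The work is concentrated on the cells abutting $\partial\mathcal{P}\times[0,\delta]$. On the faces of these cells that lie on $\gamma$ the value of $h$ cannot be assigned by the generic embedding--retraction device, since it would destroy the interface condition; instead, following the explicit formula \eqref{eq:tmp} of the $m=2$ case, one keeps the null sheet $\a{0}$ separated and interpolates only the remaining $Q-1$ sheets, so that the traces from the two sides automatically satisfy $h^+|_\gamma = h^-|_\gamma + \a{0}$. To glue this $\left(Q-\frac12\right)$-valued interpolation to the $\AQ$-valued interpolation carried out in the bulk one transitions between the two in a collar of $\gamma$, using property (iii) of the retraction in Lemma~\ref{lm:retraction} to control the cost of the transition by the squared distance between the two configurations, which near $\gamma$ is bounded by $\GG(f,g)^2$ on the interface and is therefore absorbed into the right-hand side of \eqref{eq:interpolation}.

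The hard part is precisely this near-$\gamma$ surgery. The traces of $f^{\pm}$ and $g^{\pm}$ on $\gamma$ are only of class $H^{1/2}$, so one cannot naively interpolate them while keeping the map in $W^{1,2}$ and, unlike in the bulk, one cannot translate the decomposition grid away from $\gamma$ to improve them; the null-sheet--peeling therefore has to be organized so that the interface condition is imposed \emph{without} differentiating the interface traces, and the transition collar and the generic grid have to be calibrated against one another so that the extra energy is still of the order of the right-hand side of \eqref{eq:interpolation}. This coordination between the interior-type interpolation of \cite{DS} in the bulk, the explicit $\left(Q-\frac12\right)$-valued interpolation on $\gamma$, and the additional property of Almgren's retraction recorded in Lemma~\ref{lm:retraction}, is the single place where the statement requires more than a citation of \cite{DS}.
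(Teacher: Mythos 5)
Your bulk construction (radial projection or chart onto a product, cubical decomposition with a Fubini--Almgren choice of grid, skeleton interpolation by $\bxi^{-1}\circ\brho$, filling cells by a $0$-homogeneous extension) matches the paper and is fine; the reduction to $\gamma=\{x_m=0\}$ and the observation that there is no circularity with \eqref{eq:Dirbd} for $m\geq 3$ are also correct.

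The near-$\gamma$ part is where your proposal diverges from the paper and where it has a genuine gap, which you yourself flag but do not close. Your plan is to \emph{prescribe} the interpolation on the cell faces lying on $\gamma$ (imitating the explicit one-dimensional formula \eqref{eq:tmp} from the $m=2$ case), work there with $\left(Q-\frac12\right)$-valued data, and then stitch this to the $\AQ$-valued bulk interpolation through a transition collar using Lemma~\ref{lm:retraction}(iii). This route cannot be carried out as stated: the traces of $f^\pm$, $g^\pm$ on $\gamma$ are only $H^{1/2}$, so there is no $W^{1,2}$ control of a Sobolev interpolation built from those traces, and unlike in the bulk you cannot translate the grid to improve them --- exactly the obstruction you identify at the end. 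Moreover, Lemma~\ref{lm:retraction}(iii) is used in the paper for the maximum principle and for the decomposition step of the decay estimate, not in the interpolation lemma, and it does not give the collar estimate you sketch. In short, you have correctly localised the difficulty but not resolved it; the approach of prescribing $h$ on $\gamma$ is a dead end here.

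The idea you are missing is that one need not and should not touch $\gamma$ at all. The paper works throughout with the $Q$-valued maps $\hat f_0,\hat g_0$ obtained by adjoining the null sheet on $\mathcal{C}^-$, and performs the \emph{same} cubical skeleton construction everywhere. The single extra requirement is an additional generic condition in the Fubini choice of the grid offset $v$: whenever a cell $E$ meets $\gamma$, its center $x_E$ must lie in $\mathcal{C}^+$. Then, in the $0$-homogeneous filling of $E\times[0,\delta]$ from $x_E\times\{\delta/2\}$, every point of $(E\cap\mathcal{C}^-)\times(0,\delta)$ lies on the ray from the (above-$\gamma$) center to a boundary point that is itself in $\mathcal{C}^-\times[0,\delta]$; by induction on the skeleton dimension (with base case the $0$-skeleton, where the $\a{0}$ sheet is explicit) the value at that boundary point contains $\a{0}$, hence so does the value at the interior point. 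The interface condition is therefore propagated automatically by the geometry of the homogeneous extension, with no surgery, no retraction, and no need to assign or differentiate any data on $\gamma$. After that one peels off the $\a{0}$ sheet to recover a $\left(Q-\frac12\right)$-valued $h$, and the energy estimate comes out exactly as in the interior case. If you replace your near-$\gamma$ surgery and transition collar with this one extra condition on the grid, your proof closes along the lines of the paper.
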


\begin{proof} By applying a diffeomorphism, we can assume that $\gamma = \{x_m =0\}$. 
%
	Let $\mathcal{C}$ be the boundary of the cube $[-1, 1]^m$. Notice that $\mathcal{C}$ is tangent to the sphere $\partial B_1$. 
	We define the functions $\hat f$ and $\hat g$ on $\mathcal{C}$ by radial projection:
	\[ \hat f(z) := f\left( \frac{z}{|z|} \right), \quad \hat g(z) := g\left( \frac{z}{|z|} \right), \quad \text{ for every } z\in \mathcal{C}. \]
	After the radial projection, the tangential derivative on $\mathcal{C}$ at $z$ is just a multiple of the tangential derivative on $\partial B_1$ at $z/|z|$, where the factor is uniformly bounded above and below by dimensional constants. In particular $\hat f, \hat g \in W^{1,2}(\mathcal{C}, \AQt)$, that is, 
	\[ \hat f^+, \hat g^+ \in W^{1,2}(\mathcal{C}^+, \AQ), \quad \hat f^-, \hat g^- \in W^{1,2}(\mathcal{C}^-, \mathcal{A}_{Q-1}), \]
	and
	\[ \hat f^+|_\gamma = \hat f^-|_\gamma +\llbracket 0 \rrbracket, \quad \hat g^+|_\gamma = \hat g^-|_\gamma +\llbracket 0 \rrbracket,  \]
	where $\mathcal{C}^+ = \mathcal{C}\cap \{x_m >0\}$, $\mathcal{C}^- = \mathcal{C} \cap \{x_m <0\}$ and $\gamma = \{x_m = 0\}$.
%
%
	 We want to construct a function $\hat h: \mathcal{C} \times [0,\delta] \to \AQt$ which satisfies $\hat h(\cdot, 0) = \hat g$, $\hat h(\cdot, \delta) = \hat f$, $\hat h \in W^{1,2}$ and
	 \[ \hat h^+|_{\gamma \times[0,\delta]} = \hat h^-|_{\gamma \times[0,\delta]} + \llbracket 0 \rrbracket; \] and in turn, we define a function $h: B_1\setminus B_{1-\delta} \to \AQt $ by
	 \[ h\left( t\, \frac{z}{|z|} \right) := \hat h\left(z, t-(1-\delta) \right), \quad \text{ for each } z\in \mathcal{C} \text{ and } 1-\delta < t < 1, \]
	 such that $h\in W^{1,2}(B_1 \setminus B_{1-\delta}, \AQt)$ with the desired boundary data.
	 
	 Let $F$ be any of the $2m$ faces of $\mathcal{C}$, then it is an $(m-1)$-dimensional solid cube (i.e. including the interior) with side length $2$. Take for example 
	 \[ F=\left\{\left(-1, x_2, \cdots, x_m \right): -1 \leq x_j \leq 1 \text{ for every } j=2, \cdots, m \right\}. \] 
	 We will first define $\hat h$ on $F\times [0, \delta]$ using the similar construction as in the interior case, see Step 1 of \cite[Lemma 4.12]{DS} and the erratum therein. To that end we first need to extend $\hat f$ and $\hat g$ to a fatter region
	 \[ F_\delta:= \left\{\left(-1, x_2, \cdots, x_m \right): -1-\delta \leq x_j \leq 1 + \delta \text{ for every } j=2, \cdots, m \right\}, \]
	  by using their respective values on neighboring faces of $F$ and scaling appropriately on the corners. For example, for any $x_2 \in [-1 - \delta, - 1)$ fixed (the other possibility being $x_2 \in (1, 1+\delta]$), we consider the slice 
	  \[ S_{x_2} := \left\{ \left(-1, x_2, x_3, \cdots, x_m \right): -|x_2| \leq x_j \leq |x_2| \text{ for every } j=3, \cdots, m \right\} \subset F_{\delta}, \]
	  and define $\hat f, \hat g$ by their values on a neighboring face of $F$:
	  \begin{equation}\label{eq:defFng}
	  	F':= \left\{(x_1, -1, x_3, \cdots, x_m): -1\leq x_j \leq 1 \text{ for every } j= 1, 3, \cdots, m \right\}. 
	  \end{equation} 
	  To be precise on $S_{x_2}$ we define
	  \begin{equation}\label{eq:fatdef1}
	  	\hat f\left(-1, x_2, x_3, \cdots, x_m \right) := \hat f \left( |x_2|-2,\, - 1, \, \varphi_\delta( x_3), \cdots, \varphi_\delta(x_m) \right) 
	  \end{equation} 
	  where $\varphi_\delta: [-|x_2|, |x_2|] \to [-1, 1]$ is a piecewise linear function as follows
	  \begin{equation}\label{eq:fatdef2}
	  	\varphi_\delta(t) = \left\{\begin{array}{ll}
	  	-1 + \dfrac{\delta}{-1 + \delta+ |x_2|}\left( t + |x_2| \right), & -|x_2| \leq t \leq -1 + \delta \\
	  	t, & -1 + \delta \leq t \leq 1 -\delta \\
	  	1 + \dfrac{\delta}{-1 + \delta + |x_2|} \left( t-|x_2| \right), & 1 - \delta \leq t \leq |x_2|.
	 	  \end{array}\right. 
	  \end{equation} 
	That is, in the inner region of $S_{x_2}$, $\hat f$ (as well as $\hat g$) takes value on $F'$ faithfully; in the outer region $\hat f$ (as well as $\hat g$) is a scaled version of its value on $F'$, with a scaling factor at most $2$. The former is to guarantee that the construction of $\hat h$ remains faithful to $\hat f, \hat g$ near the boundary $\gamma \times [0,\delta]$.

	
	For any vector $v\in [-1-\delta,-1]^{m-1}$, consider the cubical decomposition of $F_\delta$ induced by the lattice points $\{- 1 \} \times \left( v+ \delta \mathbb{Z}^{m-1} \right)$. For $k\in \{0, \cdots, m-1\}$ we define accordingly the $k$-dimensional skeleta contained in $F_\delta$, which are the families $\mathcal{S}^k(v)$ of all closed $k$-dimensional faces of the cubes. By Fubini, for almost every $v$ and face $E\in \mathcal{S}^k(v)$, we have that $\hat f|_E, \hat g|_E \in W^{1,2}$, and moreover
	\begin{align*}
& \int_{v\in [-1 - \delta, -1]^{m-1} } \left( \sum_{E\in \mathcal{S}^k(v)} \int_E \left( |D\hat f|^2 + |D\hat g|^2 + \mathcal{G}(\hat f,\hat g)^2 \right) \right) dv\\
 \leq & C(k,m) \delta^{k } \int_{F_{\delta}} \left( |D\hat f|^2 + |D\hat g|^2 + \mathcal{G}(\hat f,\hat g)^2 \right). 
\end{align*}
	By standard arguments we can choose a vector $v$ such that
	\begin{itemize}
		\item For every $k\geq 1$, for each $E\in \mathcal{S}^k(v)$ and each $G\in \mathcal{S}^{k-1}(v)$ with $G\subset E$, the restrictions $\hat f|_E, \hat f|_G, \hat g|_E, \hat g|_G$ are all $W^{1,2}$ and moreover the traces of $\hat f|_E$ and $\hat g|_E$ on $G$ are precisely $\hat f|_G$ and $\hat g|_G$;
		\item For every $k\geq 1$,
			\[ \sum_{E\in \mathcal{S}^k(v)} \int_E \left( |D\hat f|^2 + |D\hat g|^2 \right) \leq C \delta^{k-(m-1)} \int_{F_{\delta}} \left( |D\hat f|^2 + |D\hat g|^2 \right); \]
		\item For $k=0$,
			\[ \sum_{p\in \mathcal{S}^0(v)} \mathcal{G}(\hat f(p), \hat g(p))^2 \leq C\delta^{-(m-1)} \int_{F_\delta} \mathcal{G}(\hat f,\hat g)^2; \]
		\item Whenever $E\in \mathcal{S}^{k}(v)$ intersects $\gamma$, the center of $E$, denoted by $x_E$, lies in $\mathcal{C}^+$, in other words $x_E$ lies above the boundary $\gamma$.
	\end{itemize}
	For any $k=0, \cdots, m-1$ and any $E\in \mathcal{S}^{k}(v)$ not intersecting $\gamma$, we follow the same construction as in the interior case (for $Q$-valued or $(Q-1)$-valued functions) and define $\hat h$ on $E \times [0,\delta]$ by interpolation of $\hat f^+$ and $\hat g^+$, or $\hat f^-$ and $\hat g^-$ respectively. Across the boundary $\gamma$, we temporarily extend the functions trivially by zero, that is, we set
	\[ \hat f_0 = \left\{\begin{array}{ll}
		\hat f^+, & \text{ on } \mathcal{C}^+ \\
		\hat f^- + \llbracket 0 \rrbracket, & \text{ on } \mathcal{C}^-,
	\end{array} \right.
	\quad
	\hat g_0 = \left\{\begin{array}{ll}
		\hat g^+, & \text{ on } \mathcal{C}^+ \\
		\hat g^- + \llbracket 0 \rrbracket, & \text{ on } \mathcal{C}^-,
	\end{array} \right. \]
	 so that $\hat f_0, \hat g_0$ are $Q$-valued functions. Notice that the values of $|D\hat f|,\, |D\hat g|,\, \mathcal{G}(\hat f^+, \hat g^+)$ stay the same, and on $\mathcal{C}^-$
	 \[ \frac{1}{\sqrt{2}} \mathcal{G}(\hat f^-, \hat g^-) \leq \mathcal{G}(\hat f_0, \hat g_0) \leq \mathcal{G}(\hat f^-, \hat g^-), \]
	  see \eqref{eq:chgaftzero}.
	  Recall that for any $p\in \mathcal{S}^0(v)$ contained in $\mathcal{C}^-$, we define $\hat h$ on $p\times [0,\delta]$ as a linear interpolation between $\hat f$ and $\hat g$, and that
	  \begin{equation}\label{eq:intpbase}
	  	\Dir(\hat h, p\times[0,\delta]) \leq \frac{C}{\delta} \mathcal{G}(\hat f^-(p), \hat g^-(p))^2 \leq \frac{C'}{\delta} \mathcal{G}(\hat f_0(p), \hat g_0(p))^2. 
	  \end{equation} 
	  
	  Now we construct $\hat h$ by an induction on the dimension $k$.  Suppose $E\in \mathcal{S}^k (v)$ intersects $\gamma$, where $k = 1, \cdots, m-1$. Either by the inductive hypothesis or by the base case $k=0$ (see \eqref{eq:intpbase} and assume $\hat h_0(p) = \hat h(p) + \llbracket 0 \rrbracket$), we assume that for all lower skeleta $G\in \mathcal{S}^{k-1}(v)$ with $G\subset E$, we have defined a $Q$-valued function $\hat h_0$ on $G \times [0,\delta]$ with the desired properties. Since 
	  \[ \partial\left( E\times [0,\delta] \right) = \bigcup_{G\in \mathcal{S}^{k-1}(v) \atop{G\subset E}} \left( G\times[0,\delta]\right) \, \bigcup \, \left(E\times \{0\} \right) \, \bigcup \, \left( E \times \{\delta\} \right), \] 
	  we can define $\hat h_0$ on $E\times [0,\delta]$ as the $0$-homogeneous extension of $\hat h_0|_{\partial \left( E\times [0,\delta]\right)}$. Simple computations show that
	  \[ \Dir(\hat h_0, E \times[0,\delta] ) \leq C \delta \Dir\left(\hat h_0, \partial\left( E \times [0,\delta] \right) \right). \]
	  More importantly, notice that every point on $\left( E \cap \mathcal{C}^-\right) \times (0,\delta)$ lies in a line segment between the center $x_E \times \{\delta/2\}$ and some point in $\left( E \cap \mathcal{C}^- \right) \times \{0\}$, $\left( E \cap \mathcal{C}^- \right) \times \{\delta\}$ or $\left(G\cap \mathcal{C}^- \right) \times [0,\delta]$ for some $G \in \mathcal{S}^{k-1}(v)$ and $G \subset E$, hence this construction guarantees that on $\mathcal{C}^- \times [0,\delta]$, the $Q$-valued function $\hat h_0$ always has an element $\llbracket 0 \rrbracket$; in particular we may define $\hat h \in \AQt$ accordingly and it satisfies the desired boundary condition. To sum up, we construct a function $\hat h_F$ defined on $\widetilde F_\delta \times [0,\delta]$, where $F \subset \widetilde F_\delta \subset F_\delta$, and it satisfies 
	  \[ \hat h_F(\cdot, 0) = \hat g, \quad \hat h_F(\cdot, \delta) = \hat f \text{ on } F; \]
	  \[ \hat h_F^+(\cdot, t)\big|_{\gamma} = \hat h_F^-(\cdot, t)\big|_{\gamma} + \llbracket 0 \rrbracket \quad \text{ for every } t\in [0,\delta]; \]
	  \[ \Dir(\hat h_F, F\times [0,\delta]) \leq C \delta \Dir(\hat f, F_\delta) + C\delta \Dir(\hat g, F_\delta) + \frac{C}{\delta} \int_{F_\delta } \mathcal{G}(\hat f, \hat g)^2. \]
	  
	  We would like to repeat the same argument for any neighboring face of $F$, take for example $F'$ as in \eqref{eq:defFng}; but we need to be careful and make sure the new function $\hat h_{F'}$ is consistent with $\hat h_F$ on their domains of overlap, since $\hat h_F$ is defined on a small neighborhood near $F \cap F'$ by projecting the fattened region $\widetilde F_\delta$ onto $F'$:
	  \[ \Ng(F):= F' \cap \left\{-1 \leq x_1 \leq -1 + \delta' \right\},\]
	  where $\delta' \in [0, \delta)$ is determined by the choice of $v$.
	  
	   We sketch the necessary technical modifications below. As before, we consider a fattened region $F'_\delta$ of $F'$; and we then choose a cubical decomposition of $F'_\delta$ to satisfy, in addition to the requirements stated above, that all skeleta (orthogonal to $x_1$-axis) ought to be at least $\delta/2$-distance away from $\Ng(F)$. On the interior region
	   \[ \Ng^i(F):= \Ng(F) \cap \left\{ - 1 + \delta \leq x_j \leq 1 - \delta \text{ for every } j= 3, \cdots, m \right\}, \]
	   we use $\hat h_F$ as the boundary condition to construct $\hat h_{F'}$ to make sure they agree; outside, on each $(m-1)$-dimensional $\delta$-cube $E$ contained in $\Ng(F) \setminus \Ng^i(F)$, we replace and reconstruct $\hat h_F$ on $E\times [0,\delta]$ as above. This way $\hat h_F = \hat h_{F'}$ on their domains of overlap $\Ng(F)$; moreover, since we do not redefine $\hat h_F$ near the boundary $\gamma \times [0,\delta]$, it still satisfies the desired boundary condition.
%
\end{proof}

\subsection{Decay estimate} The key point in the proof of Theorem \ref{thm:Holder} is a suitable decay estimate for the Dirichlet energy, which is essentially the content of the following proposition.

\begin{prop}\label{prop:Dirbd}
	Suppose $f$ is a $(Q-\frac{1}{2})$ Dir-minimizing map on $B_1$ with interface $(\gamma, \varphi)$ and assume that $\gamma$ is the graph of a function $\zeta$ with $\|\zeta\|_{C^1}\leq 1$. Let $0<r<1$ and assume that $f|_{\partial B_r} \in W^{1,2}(\partial B_r, \AQt)$.
	Then we have
	\begin{equation}\label{eq:Dirbd}
		\Dir(f,B_r) \leq C(m) r \Dir (f, \partial B_r) + C r^m \|D\varphi\|_{C^0}^2\, ,
	\end{equation}
	where $C(m) < (m-2)^{-1}$.
\end{prop}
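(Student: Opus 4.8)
The plan is to prove the energy decay estimate \eqref{eq:Dirbd} by a direct comparison argument: we construct an explicit competitor $g$ on $B_r$ agreeing with $f$ on $\partial B_r$, which is a suitably ``coned-off'' version of the boundary data, and then invoke the minimality of $f$ together with a rescaling of the unrolling Lemma \ref{lm:unrolling}. First I would reduce to the case $r=1$ by scaling: if $f$ is Dir-minimizing with interface $(\gamma,\varphi)$ on $B_r$, then $f_r(x):=f(rx)$ is Dir-minimizing on $B_1$ with interface $(\gamma_r,\varphi_r)$ where $\gamma_r$ is the graph of $\zeta_r(t)=r^{-1}\zeta(rt)$ (which still has $\|\zeta_r\|_{C^1}\le 1$ since $r<1$) and $\|D\varphi_r\|_{C^0}=r\|D\varphi\|_{C^0}$; the scaling of the two sides of \eqref{eq:Dirbd} then matches if we prove $\Dir(f_1,B_1)\le C\Dir(f_1,\partial B_1)+C\|D\varphi_1\|_{C^0}^2$. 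Actually, since the harmonic-extension reduction (subtracting a $C^1$, indeed harmonic, extension of $\varphi$) was carried out in Section \ref{s:mod_out_average} and only changes the energy by controlled amounts, the cleanest route is to first dispose of $\varphi$ and then produce the decay for $\varphi\equiv 0$ interfaces, picking up the $Cr^m\|D\varphi\|_{C^0}^2$ term from the energy of the extension of $\varphi$ on $B_r$ (which contributes $\le Q\iint_{B_r}|D\varphi|^2\le Cr^m\|D\varphi\|_{C^0}^2$, exactly of the right order).

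For the core estimate with $\varphi\equiv 0$, I would do the following. Straighten the interface to $\gamma=\mathbb R$ by a conformal (hence Dirichlet-energy preserving) change of variables as in Section \ref{s:mod_out_average}; this costs only dimensional constants in $\Dir(f,\partial B_r)$ because the diffeomorphism and its inverse are controlled by $\|\zeta\|_{C^1}\le 1$. Next, given the boundary trace $f|_{\partial B_1}\in W^{1,2}(\partial B_1,\AQt)$ with interface $(\mathbb R,0)$, apply the decomposition Proposition \ref{prop:decomp}: $f|_{\partial B_1}=g_0+\sum_{j=1}^J g_j$ into an irreducible $(Q_0-\tfrac12)$-piece $g_0$ and irreducible $Q_j$-valued pieces $g_j$. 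Each irreducible $Q_j$-valued piece on $\mathbb S^1$ is ``$g_j(\theta)=\sum \llbracket \xi_j(\tfrac{\theta}{Q_j}+\cdots)\rrbracket$'' for a single-valued $\xi_j\in W^{1,2}(\mathbb S^1,\RR^n)$, and the irreducible $(Q_0-\tfrac12)$-piece unwinds via \eqref{eq:unwind1}--\eqref{eq:unwind2} to a single $\zeta_0\in W^{1,2}(\mathbb S^1,\RR^n)$. For the $Q_j$-valued pieces I would cone off each $\xi_j$ harmonically (the classical device for $Q$-valued maps, cf. \cite{DS}), and for the $(Q_0-\tfrac12)$-piece I would harmonically extend $\zeta_0$ inside the disk and then apply the rescaled unrolling Lemma \ref{lm:unrolling} to produce a competitor on $B_1$ with the prescribed boundary trace $g_0$ and interface $(\mathbb R,0)$, with $\Dir=\iint_{B_1}|D(\text{harm. ext.})|^2$. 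Adding up these pieces gives a global competitor $g=(g^+,g^-)\in W^{1,2}(B_1,\AQt)$ with interface $(\mathbb R,0)$, $g|_{\partial B_1}=f|_{\partial B_1}$. Finally, for a single-valued harmonic function $u$ on $B_1$ one has the standard spherical-harmonic estimate $\iint_{B_1}|Du|^2\le \tfrac{1}{m-2}\int_{\partial B_1}|\partial_\tau u|^2$ (coming from the eigenvalue $\lambda_k=k(k+m-2)\ge m-1$ on $\mathbb S^{m-1}$, so that the radial-derivative contribution is dominated termwise); the factor $\tfrac{2}{2Q-1}\le 1$ from \eqref{eq:unrollH} only helps. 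Summing over the (boundedly many) pieces and using minimality $\Dir(f,B_1)\le\Dir(g,B_1)$ yields $\Dir(f,B_1)\le \tfrac{C(m)}{m-2}\Dir(f,\partial B_1)$ with $C(m)<(m-2)^{-1}$ after absorbing the conformal-distortion constant; reinstating $\varphi$ and undoing the scaling gives \eqref{eq:Dirbd}.

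The main obstacle, and the place I expect the real work to lie, is making the competitor genuinely admissible at the interface, i.e. guaranteeing that the coned-off map still satisfies $g^+|_{\mathbb R}=g^-|_{\mathbb R}+\llbracket 0\rrbracket$ and lies in the right $W^{1,2}$ class. The harmonic extension of a single-valued $\RR^n$-map does not individually respect the ``difference one sheet'' structure, so one cannot just harmonically extend $f^+$ and $f^-$ separately. This is exactly why the decomposition into irreducible pieces plus the unrolling lemma is needed: the unrolling construction \eqref{eq:unwindsld1}--\eqref{eq:unwindsld2} automatically builds a $(Q-\tfrac12)$-valued map whose two traces on the slit/real axis differ by the single missing sheet, so that the interface condition holds by construction. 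A secondary subtlety is dimension $m=2$, where $(m-2)^{-1}=\infty$ and the stated inequality degenerates; here the bound \eqref{eq:interpolation2} of Lemma \ref{lm:interpolation} (interpolation at $m=2$) is the substitute device alluded to in the remark ``the proof of \eqref{eq:Dirbd} given later in the planar case is not using the current proposition'' — in the planar case one proves the (weaker, $C(m)$-free) decay directly via the $m=2$ unrolling/interpolation, and the constant-$<(m-2)^{-1}$ claim is only asserted for $m\ge 3$. One must also track that the number of irreducible pieces $J$ is bounded by $Q$, so the summation of constants is harmless, and that the conformal straightening constant multiplies $\Dir(f,\partial B_r)$ but not $r$, so the homogeneity in $r$ is preserved.
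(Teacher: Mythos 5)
Your planar ($m=2$) outline essentially reproduces the paper's argument: decompose $f|_{\partial B_1}$ into irreducible pieces via Proposition~\ref{prop:decomp}, unwind the $(Q_0-\tfrac12)$-piece to a single-valued $\zeta_0$ with $\zeta_0(0)=\zeta_0(2\pi)=0$, extend (the paper uses $\overline\zeta_0(r,\theta)=\sum c_l r^{l/2}\sin(l\theta/2)$, which is exactly the harmonic extension on the slit disk respecting the zero Dirichlet data on $\mathbb R_{\ge 0}$), roll back via Lemma~\ref{lm:unrolling}, and harmonically extend the full $Q_j$-valued pieces. You also correctly flag the key subtlety that the competitor's interface condition must be built in by the unrolling construction, not recovered after a naive harmonic extension of $f^\pm$ separately. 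That part of your plan is sound, up to the cosmetic point that the paper arrives at $C(2)\approx 3Q$ (the $(m-2)^{-1}$ bound is, as you note, vacuous in the plane).

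The genuine gap is that your argument for $m\ge 3$ does not work, and the paper's actual non-planar proof is of an entirely different nature. The decomposition into irreducible pieces (Proposition~\ref{prop:decomp}) and the unrolling device (Lemma~\ref{lm:unrolling}) are two-dimensional constructions: Proposition~\ref{prop:decomp} is a statement about maps on $\mathbb S^1$, and the unrolling uses the conformal map $re^{i\theta}\mapsto r^{2/(2Q-1)}e^{i2\theta/(2Q-1)}$, which has no analogue on $\mathbb S^{m-1}$ for $m\ge 3$ (indeed $\mathbb S^{m-1}$ is simply connected, so there is no way to ``unwind'' an irreducible $Q$-valued trace into a single-valued function by reparametrization). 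Consequently your plan of reducing to single-valued harmonic extensions and invoking the eigenvalue bound $\lambda_k=k(k+m-2)\ge m-1$ never gets off the ground in higher dimensions. In the paper, the non-planar case is instead proved by an induction on $Q$ with a ``diameter dichotomy'' for the mean $\bar g$ of the boundary trace: when $d(\bar g)\le M$, the $Q$-valued Poincar\'e inequality together with an explicit radial competitor $\hat f(x)=\varphi(|x|)g(x/|x|)$ (and the observation that $\varphi\equiv 1$ does not solve the resulting Euler--Lagrange equation) yields a strict improvement $\Dir(f,B_1)\le(m-2)^{-1}-\gamma$; when $d(\bar g)>M$, one collapses the mean via the Almgren Lemma, applies the retraction/maximum principle (Lemma~\ref{lm:retraction} and Proposition~\ref{prop:dcp}) to split $f$ into lower-$Q$ pieces, invokes the inductive hypothesis together with the interior estimate of~\cite{DS}, and glues everything back using the interpolation Lemma~\ref{lm:interpolation}. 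None of these ingredients appear in your proposal, and there is no way to replace them by the unwind-and-cone strategy. A smaller point: the reduction to $\varphi\equiv 0$ in the paper is not via the harmonic extension of Section~\ref{s:mod_out_average} (which requires analyticity and $m=2$), but via a $C^1$ extension $\phi$, a comparison with a minimizer $k$ for the shifted data, and a Young-type estimate $C'(m)(1+\varepsilon)^2<(m-2)^{-1}$; the conclusion is the same but the mechanism is different, and tracking this carefully is what allows the paper to retain the constant $C(m)<(m-2)^{-1}$ after reinstating $\varphi$.
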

\begin{remark}\label{rmk:Dirbd}
	By translation, the same estimate holds for any ball $\overline{B_r(y)} \subset B_1$ with $y\in \gamma$. If $\overline{B_r(y)} \cap \gamma = \emptyset$, the analogous interior estimate was proven in \cite[Proposition 3.10]{DS}. 

\end{remark}
\begin{proof}
	 We will prove \eqref{eq:Dirbd} for $r=1$, because the general case follows from a scaling argument. Moreover we will assume, without loss of generality, that $\varphi \equiv 0$. Indeed, for a general $\varphi$, we let $\phi$ be an extension to $B_1$ with the property that $\|D\phi\|_{C^0(B_1)} \leq C \|D\varphi\|_{C^0(\gamma)}$, since the interface $\gamma$ is given by the graph of $\zeta$ satisfying $\|\zeta\|_{C^1} \leq 1$. Define then $(h^+, h^-)$ as 
	 \[
	 h^\pm (x) = \sum_i \a{f^\pm (x) - \phi (x)}\, .
	 \]
	 Moreover, let $k^\pm$ be a Dir-minimizer with boundary values $h^\pm$ and interface $(\gamma, 0)$ and construct a corresponding competitor for $f$ by setting
	 \[
	 \bar{g}^\pm (x) = \sum_i \a{k^\pm +\phi (x)}\, .
	 \]
	 Observe that for every $\varepsilon$ there is a constant $C(\varepsilon)$ such that
	 \begin{align*}
	 |D_\tau h^\pm (x)|^2 &\leq (1+\varepsilon) |D_\tau f^\pm (x)|^2 + C (\varepsilon) |D_\tau \phi (x)|^2, \\
	 |D \bar g^\pm (x)|^2 &\leq (1+\varepsilon) |D k^\pm (x)|^2  + C (\varepsilon) |D \phi (x)|^2.
	 \end{align*}
	 Here $D_\tau$ denotes the tangential derivative on the boundary $\partial B_1$.
	 After proving the Proposition for interfaces $(\gamma, 0)$ we will know that there is a constant $C' (m) < \frac{1}{m-2}$ such that
	 \begin{align*}
	 \Dir (k, B_1) &\leq C'(m) \Dir (k, \partial B_1) = C'(m) \Dir (h, \partial B_1) \leq C' (m)( 1 + \varepsilon) \Dir (f, \partial B_1) + C (m, \varepsilon) \|D\phi\|_{C^0}^2.   
	 \end{align*}
	 Hence we could estimate
	 \begin{align*}
	 \Dir (f, B_1) &\leq \Dir (\bar g, B_1) \leq (1+\varepsilon) \Dir (k, B_1) + C (\varepsilon) \|D\phi\|_{C^0}^2\\
	 &\leq C'(m) (1+\varepsilon)^2 \Dir (f, \partial B_1) + C' (m, \varepsilon) \|D\phi\|_{C^0}^2\, .
	 \end{align*}
	 Since $C'(m) < \frac{1}{m-2}$ it suffices to choose $\varepsilon$ so that $C(m) := C'(m) (1+\varepsilon)^2 < \frac{1}{m-2}$.
	 From now on we restrict therefore our attention to the case $\varphi \equiv 0$.
	 
	 \medskip
	 
	 \textbf{The planar case.} Set $g:= f|_{\partial B_1}$ and let $g= g_0 + \sum_{j=1}^J g_j$ be a decomposition into irreducible maps as in Proposition \ref{prop:decomp}. Suppose $g_0$ unwinds to $\zeta_0: \So \to \RR^n$ as in Proposition \ref{prop:decomp} (ii); and each $g_j$ unwinds to a $W^{1,2}$ function $\zeta_j: \So \to \RR^n$ as in \cite[Proposition 1.5 (ii)]{DS}:
	\[ g_j(x) = \sum_{z^{Q_j} = x } \llbracket \zeta_j(z) \rrbracket. \]
	
	Now we construct an admissible competitor for $f$ as follows. Recall that $\zeta_0(0) = \zeta_0(2\pi) = 0$, we consider its Fourier expansion
	\[ \zeta_0(\theta) = \sum_{l=1}^\infty c_l \sin \left( \frac{l\theta}{2} \right). \]
	We then extend $\zeta_0$ to be a $W^{1,2}$ function defined on all of $B_1$ as:
	\[ \overline\zeta_0(r,\theta) = \sum_{l=1}^\infty r^{\frac{l}{2}} c_l \sin \left( \frac{l\theta}{2} \right). \]
	Note that $\overline\zeta_0$ is not harmonic, but it vanishes on all of the positive real axis. We also consider the harmonic extension of each $\zeta_j$, denoted by $\overline\zeta_j$. Simple computations show that
	\begin{equation}\label{tmp:ext}
		\iint_{\DD} |D\overline\zeta_0|^2 \leq 2 \int_{\So} |\partial_\tau \zeta_0 |^2, \quad \iint_{\DD} |D\overline{\zeta_j}|^2 \leq \frac12 \int_{\So} |\partial_\tau \zeta_j |^2.
	\end{equation}	
	We then unroll $\overline\zeta_0$ to a $(Q_0 - \frac12)$-valued function $h_0 = (h_0^+, h_0^-)$ as in Lemma \ref{lm:unrolling}. By definition, it follows that $h_0$ satisfies the boundary condition
	\[ h_0^+|_{\gamma} = h_0^-|_{\gamma} + \llbracket 0 \rrbracket. \]
	 We also unroll each $\overline\zeta_j$ to $Q_j$-valued function $h_j$ by 
	\[ h_j(x) = \sum_{z^{Q_j} = x} \llbracket \overline\zeta_j (z) \rrbracket. \]
	The function $h = (h_0^+, h_0^-) + \sum_{j=1}^J h_j$ has interface $(\gamma, \varphi)$, agrees with $f$ on $\So$, and thus is an admissible competitor for $f$ in $B_1$. Therefore by Lemma \ref{lm:unrolling}, \cite[Lemma 3.12]{DS} and \eqref{tmp:ext}, we get
	\begin{align*}
		\Dir(f, B_1) \leq \Dir(h, B_1)  &= \sum_{j=0}^J \Dir(h_j, B_1)  = \sum_{j=0}^J \iint_{\DD} |D\overline \zeta_j|^2 
	 \leq 2 \sum_{j=0}^J \int_{\So} |\partial_\tau  \zeta_j|^2 \\
		& = (2Q_0-1) \Dir(g_0, \So) + \sum_{j=1}^J 2Q_j \Dir(g_j, \So) 
		 \leq 2Q \Dir(g, \partial B_1).
	\end{align*}
	In particular, the above inequality says that the constant in \eqref{eq:Dirbd} satisfies $C(2) = 2Q(1+\epsilon)^2$, and we may assume that $C(2) = 3Q$ for example.

	\textbf{The non-planar case.} We define $Q$-valued functions $\tilde{g}$ and $\tilde{f}$ by adding a ``$0$ sheet'' to $g^-$ and $f^-$, as in \eqref{eq:extension}. Observe that $|D\tilde g (x)| = |Dg^\pm (x)|$ and $|D_\tau \tilde{f} (x)| = |D_\tau f (x)|$. So, rather than exhibiting a competitor for $g$ we wish to exhibit a competitor, say $h$, for $\tilde{g}$: we just have to respect the property that $\supp\, h (x)\ni 0$ for every $x\in B_1^-$. With a slight abuse of notation we thus keep the notation $g$ and $f$ for $\tilde{g}$ and $\tilde{f}$.  
	
	\textit{Step 1. Radial competitors.} Let $\bar g= \sum_i \llbracket \bar g_i \rrbracket \in \AQ$ be a mean for $g$ so that the Poincar\'e inequality of \cite[Proposition 2.12]{DS} holds, i.e.
	\begin{equation}\label{eq:Poincarebd}
		\left( \int_{\partial B_1} \GG(g, \bar g)^{p} \right)^{1/p} \leq C (p) \left( \int_{\partial B_1} |Dg|^2 \right)^{1/2},
	\end{equation}
	where the exponent $p$ can be taken to be any finite real $p\geq 1$ if $m=3$ and any real $1\leq p\leq 2^*$ (with $\frac{1}{2^*} = \frac12 - \frac{1}{m-1}$) when $m\geq 4$. Assume the diameter of $\bar g$ is smaller than a constant $M>0$ (whose value is to be determined later),
	\[ d(\bar g) \leq M. \]

	Recall next \eqref{eq:defmin} and define the function $m (x):= \dist (\supp\, g (x), 0)$.  Observe that 
\[
T \;\mapsto\; \dist (\supp\, (T), 0)
\] 
is a Lipschitz map with constant less or equal than $1$ by \eqref{eq:trgmin}: thus $|D m|\leq |Dg|$ and $|D_\tau m|\leq |D_\tau g|$. Moreover $m$ obviously vanishes on $\partial B_1^-$ (whose surface measure is larger than a geometric constant). By the relative Poincar\'e inequality, we know
	\begin{equation}
		\int_{\partial B_1}  m(x)^2 \leq C \int_{\partial B_1} | D m (x)|^2 \leq C \int_{\partial B_1} |Dg(x)|^2.
	\end{equation}
	Hence
	\begin{equation}\label{eq:ming}
		\bar m ^2 := \dist (\supp ( \bar g), 0)^2 = \fint_{\partial B_1} m^2 \lesssim \int_{\partial B_1} m^2 + \int_{\partial B_1} \GG(g(x), \bar g)^2 \leq C \int_{\partial B_1} |Dg(x)|^2.
	\end{equation}
	Combined with the assumption $d(\bar g) \leq M$, it follows that
	\[ |\bar g|^2 = \sum_i |\bar g_i|^2 \leq Q(C+M^2). \]
	Thus
	\begin{align*}
		\int_{\partial B_1} |g|^2 \leq 2 \int_{\partial B_1} \GG(g, \bar g)^2 + 2 \int_{\partial B_1} |\bar g|^2 \leq C_{Q,M},
	\end{align*}
	where $C_{Q,M}$ is a constant depending on $Q$ and $M$ with positive correlation.
	Let $\varphi$ be a real-valued function in $W^{1,2}([0,1])$ with $\varphi(1) = 1$. Then 
	\[ \hat f(x) := \varphi(|x|) g\left( \frac{x}{|x|} \right)\] is a suitable competitor for $f$. A simple computation shows that
	\begin{align*}
		\iint_{B_1} |D \hat f|^2 & = \left( \int_{\partial B_1} |g|^2 \right) \int_0^1 \varphi'(r)^2 r^{m-1} dr + \left( \int_{\partial B_1} |Dg|^2 \right) \int_0^1 \varphi(r)^2 r^{m-3} dr \\
		& \leq \int_0^1 \left( \varphi(r)^2 r^{m-3} + C_{Q,M} \varphi'(r)^2 r^{m-1} \right) dr =: I(\varphi).
	\end{align*}
	By minimality we deduce that
	\[ \Dir(f, B_1) \leq \inf_{\varphi \in W^{1,2}([0,1]) \atop{\varphi(1) = 1}} I(\varphi). \]
	We notice that $I(1) = \frac{1}{m-2}$ ($\varphi \equiv 1$ corresponds to the trivial radial competitor for $f$). On the other hand $\varphi \equiv 1$ can not be a minimum for $I$ because it does not satisfy the corresponding Euler-Lagrange equation. So there exists a constant $\gamma = \gamma(Q, M)>0 $ such that
	\[ \Dir(f, B_1) \leq \inf_{\varphi \in W^{1,2}([0,1]) \atop{\varphi(1) = 1}} I(\varphi) = \frac{1}{m-2} - \gamma. \]
	
	In particular, when $Q=1$, the diameter $d(\bar g) = 0$ and we are done. We will prove the proposition by an induction on $Q$.
	
	\textit{Step 2. Splitting procedure: the inductive step.} Let $Q\geq 2$ be fixed and assume that the proposition holds for every $Q^* < Q$. Assume moreover that $d(\bar g)> M$. The strategy of the proof is to decompose $f$ into several pieces in order to apply the inductive hypothesis. To that end, we first \textit{collapse} the mean $\bar g$, by applying \cite[Lemma 3.8]{DS} to $T= \bar g$. For any $\epsilon \in (0,1)$, we obtain $S= \sum_{j=1}^J k_j \llbracket S_j \rrbracket \in \AQ$ which satisfies
	\begin{equation}\label{eq:clp1}
		\beta M \leq \beta d(\bar g) \leq s(S) < +\infty, 
	\end{equation} 
	\begin{equation}\label{eq:clp2}
		\GG(S, \bar g) \leq \epsilon s(S). 
	\end{equation} 
	Here $\beta = \beta(\epsilon, Q)$ is the constant in Lemma 3.8.
	The fact that $s(S)< +\infty$ means $J\geq 2$.  Recall \eqref{eq:ming} (this estimate is independent of the assumption on $d(\bar g)$), we get
	\begin{equation}\label{tmp:clp1}
		\min S \leq \min \bar g + \GG(S, \bar g) \leq C+ \epsilon s(S). 
	\end{equation} 
	Assume without loss of generality that $|S_1| = \min S$. We let
	\[ \widetilde S := k_1 \llbracket 0 \rrbracket + \sum_{j=2}^J k_j \llbracket S_j \rrbracket. \]
	By \eqref{tmp:clp1},
	\begin{equation}\label{tmp:clp2}
		\GG(S, \widetilde S) \leq \sqrt{k_1 |S_1|^2} < \sqrt{Q} \dist (\supp\, (S), 0) \leq C \sqrt{Q} + \epsilon \sqrt{Q} s(S).
	\end{equation} 
	We fix $\epsilon$ with $\epsilon\sqrt{Q} = \frac{1}{64}$; we may also choose $M = M(Q, \beta(\epsilon,Q))$ sufficiently large
	\begin{equation}\label{tmp:clp3}
		C  \leq \epsilon \beta M \leq \epsilon s(S). 
	\end{equation} 
	Thus it follows from \eqref{eq:clp1} that
	\[ \GG(S, \widetilde S)< 2\epsilon\sqrt{Q} s(S) = \frac{1}{32} s(S). \] 
	Combined with \eqref{eq:clp2}, we have
	\begin{equation}\label{eq:bargS}
		\GG(\bar g, \widetilde S) \leq \sqrt{2\GG(S, \bar g)^2 + 2 \GG(S, \widetilde S)^2} < \frac{1}{16} s(S). 
	\end{equation} 
	On the other hand, we also have 
	\begin{equation}\label{tmp:lbdist}
		s(\widetilde S) \geq (1-2\epsilon) s(S).
	\end{equation} 
	 In fact, either $s(\widetilde S) = |S_i - S_j|$ for $i, j \neq 1$, in which case $s(\widetilde S) \geq s(S)$ by definition; or $s(\widetilde S) = |S_i|$ for some $i \neq 1$, then by \eqref{tmp:clp1} and \eqref{tmp:clp3}
	\[ s(\widetilde S) = |S_i| \geq |S_i - S_1| - |S_1| \geq s(S) - \min S \geq (1-2\epsilon) s(S). \]
	
	Let 
	\[ \vartheta: \AQ \to \overline{B_{s(\widetilde S)/8}(\widetilde S)} \]
	be the retraction given by Lemma \ref{lm:retraction}. We define $h\in W^{1,2}(B_{1-\eta})$ by
	\[ h(x) := \vartheta \left( f\left( \frac{x}{1-\eta} \right) \right), \]
	where $\eta$ is a small parameter to be determined later. By Lemma \ref{lm:retraction} (iii), $h(x)$ contains a zero element for every $x\in B_{1-\eta}^-$. By removing one zero element in the lower half space we may consider $h$ as a function in $W^{1,2}(B_{1-\eta}, \AQt)$. Therefore by \cite[Theorem 4.2]{DDHM} there exists a Dir-minimizer $\hat h\in W^{1,2}(B_{1-\eta}, \AQt)$ with interface $(\gamma, \varphi)$, such that $\hat h = h$ on $\partial B_{1-\eta} \setminus \gamma$. Almost everywhere on $\partial B_{1-\eta}$, $\hat h$ takes value in $\overline{B_{s(\widetilde S)/8}(\widetilde S)}$. Therefore by Proposition \ref{prop:dcp} $\hat h$ can be decomposed into the sum of $h_1$ and $h_2$, where $h_1$ is a $K$-valued function and Dir-minimizer, $h_2$ is an $L$-valued function and Dir-minimizer with interface $(\gamma, \varphi)$, and $K, L \leq Q-1$. By \cite[Proposition 3.10]{DS} and the inductive hypothesis, we have
	\[ \Dir(h_1, B_{1-\eta}) \leq \left( \frac{1}{m-2} - \gamma_i \right)(1-\eta) \, \Dir(h_1, \partial B_{1-\eta}), \]
	\[ \Dir(h_2, B_{1-\eta}) \leq \left( \frac{1}{m-2} - \gamma_b \right)(1-\eta) \, \Dir(h_2, \partial B_{1-\eta}). \]
	Hence
	\begin{align*}
		\Dir(\hat h, B_{1-\eta}) & \leq  \left( \frac{1}{m-2} - \gamma \right)(1-\eta) \, \Dir(h, \partial B_{1-\eta}) \\
		&  = \left( \frac{1}{m-2} - \gamma \right)(1-\eta)^{m-2} \, \Dir(g, \partial B_{1}) \\
		& <   \left( \frac{1}{m-2} - \gamma \right).
	\end{align*} 
	Here $\gamma_0 = \min\{\gamma_i, \gamma_b\}>0$ is a constant depending on $m$ and $Q$.
	We consider the following competitor
	\[ \hat f = \left\{\begin{array}{ll}
		\hat h, & \text{ in } B_{1-\eta} \\
		\text{interpolation between } \hat h \text{ and } g \text{ as in Lemma \ref{lm:interpolation}, } & \text{ in } B_1 \setminus B_{1-\eta}.
	\end{array} \right. \]
	By the estimate \eqref{eq:interpolation},
	\begin{align}
		\Dir(\hat f, B_1) & \leq \Dir(\hat h, B_{1-\eta}) + C\eta\left( \Dir(\hat h, \partial B_{1-\eta}) + \Dir(g, \partial B_1) \right) + \frac{C}{\eta} \int_{\partial B_1} \GG(g, \vartheta(g))^2 \nonumber \\
		& < \frac{1}{m-2} - \gamma_0 +2C\eta + \frac{C}{\eta} \int_{\partial B_1} \GG(g, \vartheta(g))^2.\label{tmp:cpt}
	\end{align}
	
	Now we estimate the last term in the right hand side of \eqref{tmp:cpt}. By the definition of the retraction $\vartheta$, $g$ and $\vartheta(g)$ only differ on the set
	\[ E: = \left\{x\in \partial B_1: g(x) \notin \overline{B_{s(\widetilde S)/8}(\widetilde S) } \right\}. \]
	For every $x\in E$, by \eqref{eq:bargS} and the properties of $\vartheta$,
	\[ \GG(\vartheta \circ g(x), \bar g) = \GG(\vartheta \circ g(x), \vartheta( \bar g) ) <  \GG(g(x), \bar g), \]
	and
	\[ \GG(g(x), \bar g) \geq \GG(g(x), \widetilde S) - \GG(\bar g, \widetilde S) > \frac{1}{16} s(\widetilde S). \]
	Hence
	\begin{equation}\label{eq:bddiff}
		\int_{\partial B_1} \GG(g, \vartheta(g))^2 \leq 2\int_E \GG(g(x), \bar g)^2 + \GG\left(\vartheta\circ g(x), \bar g \right)^2 \leq 4 \int_E \GG(g(x), \bar g)^2 \leq C |E|^{\frac{2}{m-1} }.
	\end{equation}
	Recall \eqref{tmp:lbdist} and \eqref{tmp:clp3},
	\[ s(\widetilde S) \geq (1-2\epsilon) s(S) \geq (1-2\epsilon) \beta M. \]
	We may estimate the measure of $E$ by Chebyshev inequality
	\begin{align*}
		|E| \leq \int_{B_1} \left( \dfrac{\GG(g(x), \bar g) }{s(\widetilde S)/16 } \right)^2 \leq \frac{C}{M^2}.
	\end{align*}
	Combined with \eqref{tmp:cpt} and \eqref{eq:bddiff}, we conclude that
	\[ \Dir(\hat f, B_1) \leq \frac{1}{m-2} - \gamma_0 + C' \eta + \frac{C'}{\eta M^2}, \]
	where the constants $\gamma_0, C'$ only depend on $Q$ and $m$. We first choose $\eta$ so that $C'\eta = \frac{\gamma_0}{3}$, then we choose $M$ so that $\frac{C'}{\eta M^2} = \frac{\gamma_0}{3}$. Therefore by the minimality of $f$ 
	\[ \Dir(f, B_1) \leq \Dir(\hat f, B_1) \leq \frac{1}{m-2} - \frac{\gamma_0}{3}. \]
	
	\textit{Step 3. Conclusion.} With the value of $M$ fixed, Step 1 shows that if $d(\bar g) \leq M$, there exists $\gamma = \gamma(Q)>0$ such that
	\[ \Dir(f, B_1) \leq \frac{1}{m-2} - \gamma. \]
	Assuming the inductive hypothesis, Step 2 shows that if $d(\bar g) > M$,
	\[ \Dir(f, B_1) \leq \frac{1}{m-2} - \frac{\gamma_0}{3}. \]
	This concludes the proof.
\end{proof}
	
\subsection{Proof of Theorem \ref{thm:Holder}}
We want to prove the following decay of Dirichlet energy
\begin{equation}\label{eq:Dirbdrad}
	\Dir (f, B_r) \leq C r^{m-2+2\beta} (\Dir (f, B_1) + \|D\varphi\|_{C^0}^2)
\end{equation}
for every $y \in B_{\frac12}$ and almost every $0<r \leq \frac12$.

First of all observe that the estimate follows from Proposition \ref{prop:Dirbd} for $y\in \gamma$. 
Indeed in that case, if we let 
$h(r) = \iint_{B_r(y)} |Df|^2$, then $h$ is absolutely continuous and 
\[ h'(r) = \int_{\partial B_r(y)} |Df|^2 \geq \int_{\partial B_r(y)} |D_\tau f|^2 =: \Dir(f, \partial B_r(y)) \quad \text{ for almost every } r.
\]
Combined with \eqref{eq:Dirbd} we have
\[ 
h (r) \leq C(m) rh' (r) + C r^m \|D\varphi\|_{C^2}^2 \leq \frac{r h'(r)}{m-2+2\beta} + C r^m \|D\varphi\|_{C^0}^2\, ,
\]
(where $\beta$ is assumed to be smaller than $1$). 
We next define $k (r) := h (r) + A r^m$ and compute
\begin{align*}
k (r) & = h (r) + A r^m \leq \frac{r}{m-2+2\beta}  h' (r) + C \|D\varphi\|_{C^0}^2 r^m + A r^m\\
& \leq \frac{r}{m-2+2\beta} k'(r) + C \|D\varphi\|_{C^0}^2 r^m - A \left(\frac{m}{m-2+2\beta} -1\right) r^m\, .
\end{align*}
Since 
\[
\frac{m}{m-2+2\beta} -1 > 0\, ,
\]
for $A = C' \|D\varphi\|_{C_0}^2$ with $C'$ sufficiently large we conclude
\[
k (r) \leq \frac{r}{m-2+2\beta} k'(r)\, 
\]
and integrating the latter inequality in the interval $[r,1/2]$ we get the desired estimate
\begin{align*}
\Dir (f, B_r (y)) &\leq k (r) \leq r^{m-2+2\beta} k \left(\frac{1}{2}\right) 
\leq r^{m-2+2\beta} \left( \Dir (f, B_{1/2} (y)) + C' \|D\varphi\|_{C^0}^2\right)\\
&\leq C r^{m-2+2\beta} \left( \Dir (f, B_1)+ \|D\varphi\|_{C^0}^2\right)\, . 
\end{align*}

Consider now a point $y\in B_{1/2}\setminus \gamma$. If $r\geq \frac{1}{4}$ the estimate \eqref{eq:Dirbdrad} is then obvious. Hence we assume $r < \frac{1}{4}$. Let next $\rho:= \dist (y, \gamma)$. If $r\geq \rho$, consider $x\in \gamma$ such that $|x-y|= \dist (y, \gamma)$ and observe that $B_{2r} (x) \supset B_r (y)$. The estimate follows then from the one for $y\in \gamma$. Otherwise, we have two possibilities. If $\rho \geq \frac{1}{4} > r$, we then can use the decay estimate for $Q$-valued Dir-minimizers to infer
\[
\Dir (f, B_r (y)) \leq C r^{m-2+2\beta} \Dir (f, B_{1/4} (y)) \leq C r^{m-2+2\beta} \Dir (f, B_1)\, .
\]
If $r<\rho < \frac{1}{4}$ we can then proceed in two steps to prove
\[
\Dir (f, B_r (y)) \leq \left(\frac{r}{\rho}\right)^{m-2+2\beta} \Dir (f, B_\rho (y))
\leq C r^{m-2+2\beta} \left(\Dir (f, B_1) + \|D\varphi\|_{C^0}^2\right)\, .
\]
Having finally proved the decay \eqref{eq:Dirbdrad}, the H\"older continuity follows from the Campanato-Morrey estimate. 

\section{First variations and monotonicity of the frequency function}\label{s:frequency}

In this section we address a main tool to prove Theorem \ref{thm:main}, the monotonicity of the frequency function. The original frequency function was introduced by Almgren in \cite{Alm} for Dir-minimizing $Q$-valued map, cf. also \cite{DS}. The one for $(Q-\frac{1}{2})$-valued maps with interface $(\gamma, 0)$ in $\mathbb R^m$ was introduced in \cite{DDHM} and requires a subtle argument. Since our Theorem \ref{thm:main} is $2$-dimensional, we can take advantage of the reduction to Theorem \ref{thm:main_simple} and restrict our attention to the model situation in which the interface is $(\RR, 0)$. Under such assumption the statement and proof of the relevant formulae is just a straightforward adaptation of the arguments in \cite{DS}, which we give below for the reader's convenience (the issue in \cite{DDHM} is that in dimension $m\geq 3$ it is not possible to ``rectify'' a general $\gamma$ with a conformal change of coordinates). 

\begin{defn}[The frequency function]
Assume $f = (f^+, f^-)$ is a $(Q-\frac{1}{2})$-valued map on $\Omega \subset \RR^m$ with interface $(\gamma, \varphi)$ and consider a ball $B_r (x) \subset \Omega$ with $x\in \gamma$. 
	We define
	\begin{equation}
		D_{x,f}(r) = {\rm Dir}\, (f, B_r (x)) , \quad H_{x,f} (r) = \int_{\partial B_r (x)} |f|^2 := \int_{\partial B_r^+(x)}|f^+|^2 + \int_{\partial B_r^-(x)} |f^-|^2.
	\end{equation}
	When $H_{x,f}(r) > 0$, we define the frequency function
	\begin{equation}
		I_{x,f}(r) = \frac{r D_{x,f}(r)}{H_{x,f}(r)}.
	\end{equation}
\end{defn}
When $x$ and $f$ are clear from the context, we often use the shorthand notation $D(r), H(r)$ and $I(r)$.

\begin{prop}[First variations] Assume $f=(f^+, f^-) \in W^{1,2}(\Omega, \AQt)$ is Dir-minimizing on $\Omega \subset \RR^2$ with interface $(\RR, 0)$ and let $B_r \subset \Omega$. Then 
	\begin{equation}\label{eq:innervar}
\int_{\partial B_r(x)}|Df|^2 = 2 \left( \int_{\partial B_r^+(x)} \sum_{j=1}^Q |\partial_\nu f_j^+|^2 + \int_{\partial B_r^-(x)} \sum_{j=1 }^{Q-1} |\partial_\nu f_j^-|^2 \right)
	\end{equation}
and 
	\begin{equation}\label{eq:outervar}
		\iint_{B_r(x)} |Df|^2 = \int_{\partial B_r^+(x)} \sum_{j=1}^Q \langle \partial_\nu f_j^+, f_j^+ \rangle + \int_{\partial B_r^-(x)} \sum_{j=1 }^{Q-1} \langle \partial_\nu f_j^-, f_j^- \rangle\, .
	\end{equation}
	Here $\nu$ denotes the outer unit normal on the boundary of the given ball, and $f^+ = \sum_{j=1}^Q \llbracket f_j^+ \rrbracket $ and $f^- = \sum_{j=1}^{Q-1} \llbracket f_j^- \rrbracket$ are measurable selections of $f^+$ and $f^-$, given by \cite[Proposition 0.4]{DS}.
\end{prop}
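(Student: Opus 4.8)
The plan is to derive both identities as the Euler--Lagrange equations of the Dirichlet energy, testing with two different families of competitors: \emph{inner variations} (reparametrizations of the domain) for \eqref{eq:innervar}, and \emph{outer variations} (perturbations of the values) for \eqref{eq:outervar}. Since the interface is the flat line $(\RR,0)$, a key structural observation is that the admissible variations must preserve the interface condition $f^+|_\RR = f^-|_\RR + \a{0}$; both families below are chosen to do so automatically, which is why the planar flat case is easier than the general situation in \cite{DDHM}.

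For the outer variation \eqref{eq:outervar}, I would fix the ball $B_r(x)$ with $x\in\RR$ and consider, for $\varepsilon$ small, the competitor obtained by the radial dilation $f_\varepsilon^\pm(y) := \sum_j \a{f_j^\pm\big((1+\varepsilon\,\chi(|y-x|))(y-x)+x\big)}$ for a suitable cutoff $\chi$ supported in $[0,r]$ with $\chi(0)=1$—or, more cleanly, simply compare $f$ on $B_r(x)$ with the map $g^\pm(y):=\sum_j\a{(1+\varepsilon)\,f_j^\pm(y)}$ inside $B_r(x)$, glued to $f$ outside, after noting this respects the interface because scaling fixes $\a{0}$. Wait—the cleanest route is the standard one: use the selections $f^\pm=\sum_j\a{f_j^\pm}$ from \cite[Proposition 0.4]{DS}, observe that away from the (measure-zero) branch set each sheet $f_j^\pm$ is a classical harmonic function (Dir-minimality forces harmonicity of each sheet locally, cf. \cite{DS}), integrate $\div(f_j^\pm\,\partial_i f_j^\pm)$ type quantities, and apply the divergence theorem on $B_r^\pm(x)$. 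The interior integrals combine to $\iint_{B_r(x)}|Df|^2$, the $\partial B_r^\pm(x)$ pieces give the claimed right-hand side, and the contributions along the two sides of $\RR\cap B_r(x)$ cancel: on that segment the outward normals are opposite, and $\sum_{j=1}^Q\langle\partial_{x_2}f_j^+,f_j^+\rangle=\sum_{j=1}^{Q-1}\langle\partial_{x_2}f_j^-,f_j^-\rangle$ because $f^+$ equals $f^-$ plus the $\a{0}$ sheet there, whose contribution vanishes. This last cancellation is exactly where the interface hypothesis enters, and it is the step I would write most carefully.

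For the inner variation \eqref{eq:innervar}, I would use the vector field $X(y)=\phi(|y-x|)(y-x)$ with $\phi$ a cutoff equal to $1$ near $r$, push the domain forward by the flow of $X$, and compute $\frac{d}{d\varepsilon}\big|_{\varepsilon=0}\Dir(f\circ\Psi_\varepsilon^{-1},\cdot)=0$. In two dimensions the resulting inner-variation identity is the conformally invariant one: writing $e(f)=|Df|^2$ and using polar coordinates centered at $x$, stationarity yields $\int_{\partial B_r(x)}|Df|^2 = 2\int_{\partial B_r(x)}|\partial_\nu f|^2$, where $|\partial_\nu f|^2=\sum_j|\partial_\nu f_j^\pm|^2$ on each side, which is precisely \eqref{eq:innervar}. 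Again the radial field $X$ is tangent to $\RR$, so the flow preserves $\Omega^+$, $\Omega^-$, and the interface, and no boundary term along $\RR$ appears. Concretely I would differentiate $\iint |Df|^2\,dy = \iint |D(f\circ\Psi_\varepsilon)|^2 |\det D\Psi_\varepsilon|\,dy$ term by term on $B_r^\pm(x)$, collect the $O(\varepsilon)$ coefficient, and integrate by parts; in $m=2$ the divergence term $\operatorname{div}X = 2\phi + r\phi'$ conspires with the Hessian term to leave only the boundary integral once $\phi$ is taken to approximate the indicator of $[0,r]$.

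The main obstacle is the regularity needed to justify the selections and the integrations by parts up to $\partial B_r(x)$: one must know $f^\pm|_{\partial B_r(x)}\in W^{1,2}$ (which is assumed in the proposition statement only implicitly via the frequency-function setup, so I would note it holds for a.e.\ $r$ by Fubini), and one must handle the branch set where sheets collide. The standard device from \cite{DS}—working with the metric quantity $|Df|^2$ via \eqref{eq:repD}, which is insensitive to relabeling, and approximating $f$ by piecewise-classical harmonic selections away from a closed set of measure zero with vanishing $W^{1,2}$-capacity contribution—takes care of this, and I would cite the corresponding lemmas (Proposition 0.4, and the first-variation arguments of Sections 2–3) of \cite{DS} rather than redo them. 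The genuinely new point, which I would emphasize, is simply that the flat interface makes all the extra boundary terms along $\RR$ vanish by the odd/even symmetry forced by $f^+|_\RR = f^-|_\RR+\a{0}$.
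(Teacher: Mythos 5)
Your inner-variation argument is fine and matches the paper: the domain diffeomorphisms generated by the radial field $X(y)=\phi(|y-x|)(y-x)$ preserve $\RR$, $\Omega^+$ and $\Omega^-$, so $f\circ\Psi_\varepsilon^{-1}$ is an admissible competitor and the stationarity computation is the same as \cite[Proposition 3.2]{DS}. This is exactly the observation the paper makes.

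The outer variation is where your proposal has a genuine gap, precisely in the step you flagged as the one to ``write most carefully.'' You abandon the competitor route and switch to integrating $\div(f_j^\pm\,\nabla f_j^\pm)$ over $B_r^\pm(x)$ by the divergence theorem, asserting that the contributions along $\RR\cap B_r(x)$ cancel because $\sum_{j=1}^Q\langle\partial_{x_2}f_j^+,f_j^+\rangle=\sum_{j=1}^{Q-1}\langle\partial_{x_2}f_j^-,f_j^-\rangle$ there. But the interface condition $f^+|_{\RR}=f^-|_{\RR}+\a{0}$ only matches the \emph{values} of the sheets; it says nothing about their one-sided normal derivatives. Writing $f_Q^+\equiv 0$ on $\RR$ and $f_j^+=f_j^-$ on $\RR$ for $j<Q$, the $j=Q$ term indeed drops, but the remaining equalities $\langle\partial_{x_2}f_j^+,f_j^+\rangle=\langle\partial_{x_2}f_j^-,f_j^-\rangle$ require $\partial_{x_2}f_j^+=\partial_{x_2}f_j^-$ across the diameter (a $C^1$ transmission condition), which is a \emph{consequence} of minimality that one normally \emph{derives} from an outer variation, not an input. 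Using it here is circular, or at best requires the boundary regularity theory that sits downstream in the paper. Similarly, ``each sheet is a classical harmonic function'' is only available on the open half-disks; it does not give you the reflection across $\RR$ for free.

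The fix is to stick with the idea you briefly raised and then discarded: use the competitor $f^{\varepsilon,\pm}(y)=\sum_j\a{f_j^\pm(y)+\varepsilon\psi(y,f_j^\pm(y))}$ with $\psi(y,u)=\phi(|y-x|)\,u$ and a cutoff $\phi$ supported in $B_r(x)$. Because $\psi(y,0)=0$, the sheet $f_Q^+$ stays identically $0$ on $\RR$, and the matched sheets of $f^+$ and $f^-$ stay matched, so the competitor has interface $(\RR,0)$ and the same boundary data as $f$ outside $B_r(x)$. Differentiating $\Dir(f^\varepsilon,\Omega)$ at $\varepsilon=0$ and letting $\phi$ increase to the indicator of $B_r(x)$ gives \eqref{eq:outervar} exactly as in \cite[Proposition 3.2]{DS}, with no boundary term along $\RR$ to worry about. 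This is the route the paper takes, and the single extra observation beyond \cite{DS} is precisely the one you half-made: $\psi(x,0)=0$ guarantees admissibility. (Your first outer-variation attempt, the ``radial dilation'' $f_j^\pm\big((1+\varepsilon\chi)(y-x)+x\big)$, is actually an inner variation; it is worth keeping the two classes of variations cleanly separated.)
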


\begin{remark}
Identity \eqref{eq:innervar} implies that the integral of the square of the tangential derivative on the circle $\partial B_r$ equals the integral of the square of the normal derivative.
\end{remark}

\begin{proof}
	The proof follows the same computations of \cite[Proof of  Proposition 3.2]{DS}. It just suffices to observe the following two facts:
\begin{itemize}
\item \eqref{eq:innervar} is derived by comparing the Dirichlet energy of $f$ with competitors of the form $f\circ \Phi_\varepsilon$, where $\{\Phi_\varepsilon\}$ are some specific one-parameter families of diffeomorphisms. It easy to check that the ones used in \cite[Proof of  Proposition 3.2]{DS} map $\RR$, $\Omega^+$ and $\Omega^-$ onto themselves and hence give an admissible family of competitors for our variational problem as well. 
\item Similarly, \eqref{eq:outervar} is derived by comparing the Dirichlet energy of $f$ with competitors of the form
\[
f^{\varepsilon, \pm} (x) := \sum_j \a{f_j^\pm (x) + \varepsilon \psi (x, f_j^\pm (x))} 
\]
where $\psi(x, u) = \phi(|x|) u$ satisfies $\psi(x, 0) = 0$. Therefore the functions $f^{\varepsilon, \pm}$ have also interface $(\RR, 0)$ and they are in the class of admissible competitors.\qedhere
\end{itemize}
\end{proof}

The above first variation formulae imply:
\begin{theorem}[Monotonicity of the frequency, analogue of Theorem 3.15 \cite{DS}]\label{thm:monot}
	Let $f$ be a $(Q-\frac{1}{2})$-valued Dir-minimizing map with interface $(\RR, 0)$ in an open set $\Omega\subset \RR^2$ containing the origin and assume that $f^+ (0) = Q \a{0}$. Either there exists $\delta>0$ such that 
	\[ f^+|_{B^+_\delta(0)} \equiv Q\llbracket 0 \rrbracket, \quad f^-|_{B^-_\delta(0)} \equiv (Q-1) \llbracket 0 \rrbracket; \] 
	or $I_{0,f}(r)$ is an absolutely continuous nondecreasing positive function on $(0,\dist (0, \partial\Omega))$.	
\end{theorem}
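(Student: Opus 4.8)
The plan is to run Almgren's classical argument (\cite[Theorem 3.15]{DS}) essentially verbatim, the new ingredients being the first variation identities \eqref{eq:innervar}--\eqref{eq:outervar}, which are now at our disposal, together with a short discussion that settles the dichotomy. Write $D(r) := D_{0,f}(r)$ and $H(r) := H_{0,f}(r)$. Since $f^\pm$ is continuous up to $\gamma$ by Theorem \ref{thm:Holder}, evaluating the trace relation $f^+|_\gamma = f^-|_\gamma + \a{0}$ at the origin and using the hypothesis $f^+(0)=Q\a{0}$ yields $f^-(0)=(Q-1)\a{0}$, so $|f^\pm|$ vanishes at $0$. I would first dispose of the degenerate alternative. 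If $D(r_0)=0$ for some $r_0\in(0,\dist(0,\partial\Omega))$, then $|Df^\pm|=0$ a.e.\ on the connected sets $B_{r_0}^\pm$, so $f^\pm$ is constant there, and by continuity (using $f^+(0)=Q\a{0}$ and the trace relation) the constants must be $Q\a{0}$ on $B_{r_0}^+$ and $(Q-1)\a{0}$ on $B_{r_0}^-$; this is the first alternative with $\delta=r_0$. If instead only $H(r_1)=0$ for some $r_1$, then $f^+\equiv Q\a{0}$ on $\partial B_{r_1}^+$ and $f^-\equiv(Q-1)\a{0}$ on $\partial B_{r_1}^-$; the constant pair $(Q\a{0},(Q-1)\a{0})$ is an admissible competitor (it has interface $(\RR,0)$, agrees with $f$ on $\partial B_{r_1}$, and has zero Dirichlet energy), so minimality forces $D(r_1)=0$ and we reduce to the previous case. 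Hence, unless the first alternative holds, $D(r)>0$ and $H(r)>0$ for every $r$, so that $I(r)=rD(r)/H(r)$ is well-defined and strictly positive.

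Next I would invoke the standard facts from \cite{DS}: $D$ and $H$ are locally absolutely continuous (indeed locally Lipschitz); for a.e.\ $r$ the slice $f|_{\partial B_r}$ is $W^{1,2}$ and the selections $f_j^\pm$ of \cite[Proposition 0.4]{DS} restrict to $W^{1,2}$ maps on $\partial B_r$; and for a.e.\ $r$
\[
D'(r) = \int_{\partial B_r}|Df|^2, \qquad H'(r) = \frac{1}{r}H(r) + 2D(r),
\]
the second identity being \eqref{eq:outervar} together with the arclength scaling on $\partial B_r\subset\RR^2$. Decomposing $|Df|^2$ into its normal and tangential parts on the circle and using \eqref{eq:innervar} gives in addition $D'(r) = 2\big(\int_{\partial B_r^+}\sum_{j=1}^Q|\partial_\nu f_j^+|^2 + \int_{\partial B_r^-}\sum_{j=1}^{Q-1}|\partial_\nu f_j^-|^2\big)$, whereas \eqref{eq:outervar} itself reads $D(r) = \int_{\partial B_r^+}\sum_{j=1}^Q\langle\partial_\nu f_j^+,f_j^+\rangle + \int_{\partial B_r^-}\sum_{j=1}^{Q-1}\langle\partial_\nu f_j^-,f_j^-\rangle$.

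Since $\log I = \log r + \log D - \log H$ is then locally absolutely continuous, it suffices to prove $(\log I)'\ge 0$ a.e. Using the identities for $D$, $D'$ and $H'$ displayed above, a direct computation gives
\[
\frac{I'(r)}{I(r)} = \frac{D'(r)}{D(r)} - \frac{2D(r)}{H(r)} = \frac{2\int_{\partial B_r}\sum_j|\partial_\nu f_j|^2}{\int_{\partial B_r}\sum_j\langle\partial_\nu f_j,f_j\rangle} - \frac{2\int_{\partial B_r}\sum_j\langle\partial_\nu f_j,f_j\rangle}{\int_{\partial B_r}\sum_j|f_j|^2},
\]
where, as in \eqref{eq:innervar}--\eqref{eq:outervar}, $\int_{\partial B_r}\sum_j$ abbreviates the sum of the $f^+$-integral over $\partial B_r^+$ and the $f^-$-integral over $\partial B_r^-$. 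This is nonnegative by the Cauchy--Schwarz inequality $\big(\int_{\partial B_r}\sum_j\langle\partial_\nu f_j,f_j\rangle\big)^2\le\big(\int_{\partial B_r}\sum_j|\partial_\nu f_j|^2\big)\big(\int_{\partial B_r}\sum_j|f_j|^2\big)$, together with $D,H>0$. Hence $I$ is nondecreasing; combined with the absolute continuity and positivity already established, this is the second alternative.

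The main difficulty I anticipate lies not in this algebra---which is identical to the interior case of \cite{DS} once \eqref{eq:innervar}--\eqref{eq:outervar} are granted---but in the analytic bookkeeping behind the facts quoted in the second step: the absolute continuity of $H$ and $D$ with the asserted a.e.\ derivatives, the existence of selections that are simultaneously $W^{1,2}$ on $\Omega^\pm$ and on a.e.\ circle $\partial B_r$, and the verification that the first variations leave no boundary contribution along $\gamma\cap B_r$. This last point is precisely where the reduction to the straight interface $(\RR,0)$ is decisive: the competitor vector fields used in the Proposition on First variations preserve $\RR$ and the interface value $0$, so the integration by parts on $B_r^\pm$ produces no boundary term on $\gamma$---in contrast with the general analytic $\gamma$ treated in \cite{DDHM}.
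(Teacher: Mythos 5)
Your proposal is correct and follows essentially the same route as the paper's proof: dispose of the degenerate alternative via $H(r)=0 \Rightarrow$ constant competitor, then use the absolute continuity of $D$ and $H$, the inner/outer variation formulae \eqref{eq:innervar}--\eqref{eq:outervar}, and Cauchy--Schwarz to conclude monotonicity. The only cosmetic differences are that you compute $(\log I)'$ rather than $I'$ directly (algebraically equivalent) and you explicitly separate the case $D(r_0)=0$ before $H(r_1)=0$, whereas the paper goes through $H$ only.
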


\begin{proof}
	If $H(r) = 0$ for some $r>0$, then $f^+=Q \a{0}$ a.e. on $\partial B^+_r(0)$ and $f^- = (Q-1) \a{0}$ a.e. on $\partial B^-_r (0)$. For such boundary data the only minimizer is the pair which is constant on the respective $B^\pm_r (0)$. From now on we assume therefore that $H(r)>0$ for every $r\in (0,1)$. 
	
	$D$ is absolutely continuous and
	\begin{equation}
		D'(r) = \int_{\partial B_r} |Df|^2 \text{ for almost every } r.
	\end{equation}
	Since $f^+, f^- \in W^{1,2}$ are approximately differentiable almost everywhere, we can apply the chain-rule formulas (see \cite[Propositions 1.12 and 2.8]{DS}) and justify the following computations:
	\begin{align}
		H'(r) & = \frac{d}{dr} \int_{\partial B_1^+ } r |f^+ (ry)|^2 dy + \frac{d}{dr} \int_{\partial B_1^- } r |f^- (ry)|^2 dy \nonumber \\
		 & = \int_{\partial B_1} |f(ry)|^2 dy + \int_{\partial B_1^+} r \frac{\partial}{\partial r} |f^+(ry)|^2 dy + \int_{\partial B_1^-} r \frac{\partial}{\partial r} |f^-(ry)|^2 dy \nonumber \\
		& = \frac{1}{r} \int_{\partial B_r} |f|^2 + 2 \int_{\partial B_r^+} \sum_{j=1}^Q \langle \partial_\nu f_j^+, f_j^+ \rangle + 2 \int_{\partial B_r^-} \sum_{j=1}^{Q-1} \langle \partial_\nu f_j^-, f_j^- \rangle
		= \frac{1}{r} H(r) + 2 D(r),\label{eq:Hder}
	\end{align}
	by the outer variation formula \eqref{eq:outervar}. In fact, since both $H(r)$ and $D(r)$ are continuous, we have $H\in C^1$ and the above inequality holds pointwise. Therefore
	\begin{align*}
		I'(r) & = \frac{D(r)}{H(r)} + \frac{rD'(r)}{H(r)} - rD(r) \frac{H'(r)}{H(r)^2} \\
		& = \frac{D(r)}{H(r)} + \frac{rD'(r)}{H(r)} - \frac{D(r)}{H(r)} - 2r \frac{D(r)^2}{H(r)^2} \\
		& = \frac{rD'(r)}{H(r)} - 2r \frac{D(r)^2}{H(r)^2}.
	\end{align*}
	Again by the inner and outer variations formulae \eqref{eq:innervar}, \eqref{eq:outervar}, we obtain
	\begin{align}
		I'(r)  = & \frac{2r}{H(r)^2} \left[ \left(\int_{\partial B^+_r} \sum_{j=1}^Q |\partial_\nu f_j^+|^2 + \int_{\partial B^-_r} \sum_{j=1}^{Q-1} |\partial_\nu f_j^-|^2  \right) \cdot \left( \int_{\partial B^+_r} \sum_{j=1}^Q | f_j^+|^2  + \int_{\partial B_r^-}\sum_{j=1}^{Q-1} | f_j^-|^2  \right) \right. \nonumber \\
		& \qquad \left. - \left( \int_{\partial B^+_r} \sum_{j=1}^Q \langle \partial_\nu f_j^+, f_j^+ \rangle + \int_{\partial B^-_r} 
\sum_{j=1}^{Q-1} \langle \partial_\nu f_j^-, f_j^- \rangle  \right)^2  \right]\, .
	\end{align}
We can now choose a measurable selection of the various multifuctions involved and extend such selections $f^\pm_j, \partial_\nu f^\pm_j$ to $0$ respectively on $B_r^\mp$. The Cauchy-Schwartz inequality will then imply:
\begin{align}
		I'(r)  = & \frac{2r}{H(r)^2} \left[ \int_{\partial B_r} \left(\sum_{j=1}^Q |\partial_\nu f_j^+|^2 + \sum_{j=1}^{Q-1} |\partial_\nu f_j^-|^2  \right) \cdot \int_{\partial B_r} \left( \sum_{j=1}^Q | f_j^+|^2  + \sum_{j=1}^{Q-1} | f_j^-|^2  \right) \right. \nonumber \\
		& \qquad \left. - \left( \int_{\partial B_r} \sum_{j=1}^Q \langle \partial_\nu f_j^+, f_j^+ \rangle + \sum_{j=1}^{Q-1} \langle \partial_\nu f_j^-, f_j^- \rangle  \right)^2  \right] \geq 0\, . \label{eq:monofreq}
	\end{align}
\end{proof}

\begin{corollary}\label{cor:hom}
	Let $f$ be as in Theorem \ref{thm:monot}.	$I_{0,f}(r) \equiv \alpha$ if and only if $(f^+, f^-)$ is $\alpha$-homogeneous, i.e.
	\[ f^{\pm} (x) = \sum_i \a{|x|^\alpha f^\pm_i \left(\frac{x}{|x|}\right)}\, . \]
\end{corollary}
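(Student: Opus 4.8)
The corollary is the exact analogue of \cite[Corollary 3.16]{DS} and follows directly from the first variation formulae \eqref{eq:innervar}, \eqref{eq:outervar} together with the computation of $I'$ in the proof of Theorem \ref{thm:monot}. I would prove the two implications separately.

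\emph{Homogeneity $\Rightarrow I\equiv\alpha$.} Assume $(f^+,f^-)$ is $\alpha$-homogeneous; after a dilation we may assume $\overline{B_1}\subset\Omega$. Choose measurable selections $\phi^\pm_j$ of the traces of $f^\pm$ on $\partial B_1$ by \cite[Proposition 0.4]{DS} and set $f^\pm_j(x):=|x|^\alpha\phi^\pm_j(x/|x|)$, which are measurable selections of $f^\pm$. A direct computation gives $\partial_\nu f^\pm_j(x)=\frac{\alpha}{|x|}f^\pm_j(x)$ on every sphere $\partial B_r$, and plugging this into the outer variation identity \eqref{eq:outervar} yields $D(r)=\frac{\alpha}{r}\big(\int_{\partial B_r^+}|f^+|^2+\int_{\partial B_r^-}|f^-|^2\big)=\frac{\alpha}{r}H(r)$. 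Hence $I(r)=rD(r)/H(r)\equiv\alpha$.

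\emph{$I\equiv\alpha\Rightarrow$ homogeneity.} If $I_{0,f}\equiv\alpha$ then $H>0$ on the whole interval, so by Theorem \ref{thm:monot} we are in its second alternative: $I$ is absolutely continuous and $I'\equiv0$. By the formula for $I'$ in the proof of Theorem \ref{thm:monot}, the vanishing of $I'$ for a.e.\ $r$ forces equality in the Cauchy--Schwarz inequality \eqref{eq:monofreq}, which is Cauchy--Schwarz in $L^2(\partial B_r)$ between the stacked vectors $(\partial_\nu f^\pm_j)_j$ and $(f^\pm_j)_j$. Since $H(r)=\|(f^\pm_j)_j\|_{L^2(\partial B_r)}^2>0$, the equality case produces, for a.e.\ $r$, a scalar $\lambda(r)$ with $\partial_\nu f^\pm_j(x)=\lambda(r)f^\pm_j(x)$ for a.e.\ $x\in\partial B_r$. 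Substituting this back into \eqref{eq:outervar} identifies $\lambda(r)H(r)=D(r)$, i.e.\ $\lambda(r)=D(r)/H(r)=I(r)/r=\alpha/r$. In polar coordinates this means that along a.e.\ ray each selected sheet solves the ODE $\partial_r f^\pm_j(r\omega)=\frac{\alpha}{r}f^\pm_j(r\omega)$, so $f^\pm_j(r\omega)=r^\alpha c^\pm_j(\omega)$ for some $c^\pm_j(\omega)\in\RR^n$; that is, $f^\pm$ is $\alpha$-homogeneous.

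\emph{The point requiring care.} The only nontrivial step is the last one: upgrading the pointwise proportionality $\partial_\nu f^\pm=\frac{\alpha}{|x|}f^\pm$ to the homogeneous form of $f^\pm$. Since $f^+$ (resp.\ $f^-$) is $\AQ$-valued (resp.\ $\mathcal A_{Q-1}$-valued), one must select the sheets consistently along the rays before integrating the radial ODE; this is carried out exactly as in the interior case \cite[Corollary 3.16]{DS}, using the selection and chain-rule results \cite[Propositions 1.12 and 2.8]{DS}. The interface creates no extra difficulty: the one-parameter families of diffeomorphisms entering \eqref{eq:innervar}--\eqref{eq:outervar} and the radial rescalings $x\mapsto\rho x$ map $\RR$, $\Omega^+$, $\Omega^-$ to themselves, so the whole argument stays within the class of $\big(Q-\tfrac12\big)$-valued maps with interface $(\RR,0)$.
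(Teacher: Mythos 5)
Your proof is correct and follows essentially the same route as the paper: use the Cauchy--Schwarz structure of $I'$ from the proof of Theorem~\ref{thm:monot} to deduce the pointwise proportionality $\partial_\nu f_j^\pm=\frac{\alpha}{|x|}f_j^\pm$, identify the scalar via the outer variation identity, and integrate the radial ODE along a.e.\ ray. The only cosmetic difference is that the paper treats the degenerate case $\alpha=0$ separately (where the dichotomy of Theorem~\ref{thm:monot} forces $f$ to be constant near the origin), whereas your appeal to the second alternative of that theorem implicitly restricts to $\alpha>0$; that case is trivial and does not affect the substance of the argument.
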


In the rest of the note, when dealing with a $Q$-valued funcion $f = \sum_i \a{f_i}$, we will use the notation $\lambda f$ for the multifunction $\sum_i \a{\lambda f_i}$. Similarly, for a $(Q-\frac{1}{2})$-valued function $f= (f^+, f^-)$, $\lambda f$ will denote $(\lambda f^+, \lambda f^-)$. In particular, the homogeneity of $f$ in the corollary above will be expressed by the formula
\[
f (x) = |x|^\alpha f \left(\frac{x}{|x|}\right)\, .
\]

\begin{proof}
	Suppose $\alpha=0$. $I_{0,f}(r) \equiv 0$ if and only if each $f_j^{\pm}$ is constant, so clearly $(f^+, f^-)$ is $0$-homogeneous. If $(f^+, f^-)$ is $0$-homogeneous, then each $f_j^{\pm}$ is constant on the ray starting from the origin. Thus by the continuity of $f$ near the origin, each $f_j^{\pm}$ is constant on its domain and $I_{0,f}(r) \equiv 0$.
	
	Suppose $\alpha > 0$. Then by the proof of the above theorem, $I(r)$ is a constant if and only if equality occurs in \eqref{eq:monofreq}, i.e. if and only if there exists constants $\lambda_r \in \RR$ such that
	\[ \partial_\nu f_j^{\pm}(x) = \lambda_r f_j^{\pm}(x) \text{ for almost every } r \text{ and almost every } x \text{ with } |x| = r. \]
	Moreover,
	\[ \alpha = I(r) = \frac{rD(r)}{H(r)} = \frac{r\int_{\partial B_r} \sum \langle \partial_\nu f_j^{\pm}, f_j^{\pm} \rangle }{ \int_{\partial B_r} \sum |f_j^{\pm}|^2} = r \lambda_r. \]
	Therefore $I(r) \equiv \alpha$ if and only if
	\begin{equation}\label{eq:tmphom}
		\partial_\nu f_j^{\pm}(x) = \frac{\alpha}{|x|} f_j^{\pm}(x) \text{ for almost every } x.
	\end{equation} 
	If $f$ is $\alpha$-homogeneous, clearly \eqref{eq:tmphom} holds. On the other hand, suppose \eqref{eq:tmphom} holds, we want to show that $f$ is $\alpha$-homogeneous. To that end we let $x\in \partial B_1$ and $\sigma_x = \{rx: 0<  r\leq 1\}$ be the ray from the origin through $x$. Note that $f|_{\sigma_x}\in W^{1,2}$ for almost every $x\in \partial B_1$. For those $x$ \eqref{eq:tmphom} implies
	\[ \frac{d}{dr} \frac{f_j^{\pm}(rx)}{r^{\alpha}} \equiv 0. \]
	Thus $f_j^{\pm}(rx) = r^{\alpha} f_j^{\pm} (x)$ for all $0<r\leq 1$ and almost every $x\in \partial B_1$.
\end{proof}

\begin{corollary}[analogue of Corollary 3.18 \cite{DS}]
	Let $f$ be as in Theorem \ref{thm:monot}. Suppose $H(r) \neq 0$ for every $r\in (0, \dist (\partial \Omega , 0))$. Then
	\begin{enumerate}[(i)]
		\item for almost every $r < 1$,
			\begin{equation}\label{eq:tmpH}
				\frac{d}{dr}  \log \left( \frac{H(r)}{r^{m-1}} \right)  = \frac{2I(r)}{r },
			\end{equation}
			and thus for almost every $s\leq t < 1$,
			\begin{equation}\label{eq:tmpH2}
				\left( \frac{s}{t} \right)^{2I(t)} \frac{H(t)}{t^{m-1}} \leq \frac{H(s)}{s^{m-1}} \leq \left( \frac{s}{t} \right)^{2I(s)} \frac{H(t)}{t^{m-1}};
			\end{equation}
		\item for almost every $s\leq t<1$, if $I(t)>0$, then
			\begin{equation}\label{eq:Dratio}
				\frac{I(s)}{I(t)} \left(\frac{s}{t} \right)^{2I(t)} \frac{D(t)}{t^{m-2}} \leq \frac{D(s)}{s^{m-2}} \leq \left(\frac{s}{t} \right)^{2I(s)} \frac{D(t)}{t^{m-2}}.
			\end{equation}
	\end{enumerate}
\end{corollary}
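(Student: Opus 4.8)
The plan is to deduce all three formulas from the material already in hand: the identity $D'(r)=\int_{\partial B_r}|Df|^2$, the pointwise formula \eqref{eq:Hder} for $H'$ (which in general dimension reads $H'(r)=\tfrac{m-1}{r}H(r)+2D(r)$), the fact that $D,H$ are continuous with $H\in C^1$ and $H>0$ on the relevant interval, and the monotonicity of $I$ from Theorem \ref{thm:monot}.

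For part (i): since $H\in C^1$ and $H>0$, the function $r\mapsto\log(H(r)/r^{m-1})$ is $C^1$ and
\[
\frac{d}{dr}\log\!\left(\frac{H(r)}{r^{m-1}}\right)=\frac{H'(r)}{H(r)}-\frac{m-1}{r}.
\]
Substituting $H'(r)=\tfrac{m-1}{r}H(r)+2D(r)$ from \eqref{eq:Hder}, the terms $\tfrac{m-1}{r}$ cancel and the right-hand side becomes $2D(r)/H(r)=2I(r)/r$, which is \eqref{eq:tmpH}. Integrating \eqref{eq:tmpH} over $[s,t]$ and using that $I$ is nondecreasing, so that $I(s)\le I(\rho)\le I(t)$ for $\rho\in[s,t]$, gives
\[
2I(s)\log\frac{t}{s}\le\int_s^t\frac{2I(\rho)}{\rho}\,d\rho=\log\frac{H(t)/t^{m-1}}{H(s)/s^{m-1}}\le 2I(t)\log\frac{t}{s};
\]
exponentiating, taking reciprocals (which reverses the inequalities), and multiplying through by the positive quantity $H(t)/t^{m-1}$ yields exactly \eqref{eq:tmpH2}.

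For part (ii): use the algebraic identity $D(r)=I(r)H(r)/r$, equivalently $D(r)/r^{m-2}=I(r)\cdot H(r)/r^{m-1}$. Dividing the value at $s$ by that at $t$ and inserting the two-sided bound \eqref{eq:tmpH2} gives
\[
\frac{I(s)}{I(t)}\left(\frac{s}{t}\right)^{2I(t)}\frac{D(t)}{t^{m-2}}\le\frac{D(s)}{s^{m-2}}\le\frac{I(s)}{I(t)}\left(\frac{s}{t}\right)^{2I(s)}\frac{D(t)}{t^{m-2}};
\]
bounding $I(s)/I(t)\le 1$ (legitimate since $I(t)>0$ and $I$ is nondecreasing) in the rightmost term produces \eqref{eq:Dratio}. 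In the borderline case $I(s)=0$ one has $D(s)=0$ because $H(s)>0$, and the claimed lower and upper bounds are then trivially satisfied.

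The argument is essentially routine; the only points demanding a little attention are bookkeeping — keeping track of which inequalities flip when passing from $H(t)/H(s)$ to $H(s)/H(t)$ and when using $s\le t$ — and making sure that the regularity statements recalled above ($D$ absolutely continuous, $H\in C^1$, $I$ continuous and nondecreasing, $H>0$) are only invoked on the open interval $(0,\dist(0,\partial\Omega))$ on which $H$ does not vanish, exactly as hypothesized.
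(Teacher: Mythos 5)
Your proof is correct and takes exactly the route the paper intends: \eqref{eq:tmpH} is an immediate consequence of \eqref{eq:Hder} (the $\frac{m-1}{r}$ terms cancel), \eqref{eq:tmpH2} follows by integrating and using the monotonicity of $I$, and \eqref{eq:Dratio} follows by writing $D(r)/r^{m-2}=I(r)H(r)/r^{m-1}$ and combining with \eqref{eq:tmpH2} and $I(s)\le I(t)$. The paper's own proof is just a three-line pointer to these same facts; you have filled in the computations accurately, including the bookkeeping of inequality reversals when passing to reciprocals.
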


\begin{proof}
	\eqref{eq:tmpH} follows from \eqref{eq:Hder}. \eqref{eq:tmpH2} follows from \eqref{eq:tmpH} and the monotonicity of the frequency function. Finally, \eqref{eq:Dratio}
	follows from \eqref{eq:tmpH2} and the definition of the frequency function.
\end{proof}

%

\section{Compactness and tangent functions in planar domains}\label{s:rate}

The monotonicity of the frequency function provides a way of studying the asymptotic behavior of a minimizer at small scales around a given point with highest multiplicity. In what follows we will often switch between different systems of coordinates in the plane. More precisely, a point $x\in \mathbb R^2$ will be identified with
\begin{itemize}
\item the pair $(x_1, x_2)$ which gives the standard Cartesian coordinates of $x$;
\item the complex number $z = x_1 + i x_2$;
\item the pair $(r, \theta)$ which gives the standard polar coordinates of $x$, namely $x_1 = r \cos \theta$, $x_2 = r \sin \theta$ and $z = r e^{i\theta}$.
\end{itemize}

\begin{defn}
Let $f$ be a Dir-minimizing $(Q-\frac{1}{2})$-valued map on a planar domain $\Omega$ with interface $(\RR, 0)$. Let $y$ be a point at the interface $\RR$ and assume that $\Dir(f,B_\rho (y)) >0$ for every $\rho$. We define the following rescalings of $f$ at $y$:
\begin{equation}\label{eq:defblowup}
	f_{y,\rho} (x) = \frac{f(\rho x+y)}{\sqrt{\Dir(f,B_\rho(y))}}.
\end{equation}
\end{defn}
The key point is that, up to subsequences, the latter rescalings converge locally strongly to nontrivial Dir-minimizers.

\begin{theorem}[Compactness, analogue of Theorem 3.19 in \cite{DS}]\label{thm:tangent}
	Let $f\in W^{1,2}(\Omega, \AQt)$ be a Dir-minimizing map on a planar domain $\Omega$ with interface $(\RR, 0)$. Assume $f^+(0) = Q\llbracket 0 \rrbracket$ and $\Dir(f, B_{\rho}) >0$ for every $\rho \in (0, \dist (0, \partial \Omega))$. Then for any sequence $\{f_{\rho_k}\}$ with $\rho_k \searrow 0$, a subsequence, not relabelled, converges locally uniformly to a function $g:\RR^2 \to \AQt$ satisfying the following properties:
	\begin{enumerate}[(a)]
		\item $\Dir(g,B_1) = 1$ and $g|_{U}$ is Dir-minimizing with interface $(\RR, 0)$ for any bounded set $U \subset \RR^2$;
		\item $g(x) = |x|^\alpha g\left( \frac{x}{|x|} \right)$, where $\alpha = I_{0,f}(0) >0$ is the frequency of $f$ at $0$.
	\end{enumerate}
\end{theorem}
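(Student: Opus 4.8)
The plan is to adapt Almgren's blow-up procedure to the $(Q-\frac12)$-valued interface setting: we use the frequency monotonicity of Theorem \ref{thm:monot} together with the H\"older estimate of Theorem \ref{thm:Holder} to extract a limit, and the planar interpolation Lemma \ref{lm:interpolation} to transplant competitors while preserving the interface. \textbf{Step 1 (scaling identities and uniform energy bounds).} Since $f^+(0)=Q\a{0}$ and $\Dir(f,B_\rho)>0$ for all $\rho$, Theorem \ref{thm:monot} applies and $I_{0,f}$ is absolutely continuous, nondecreasing and positive on $(0,\dist(0,\partial\Omega))$; hence $\alpha:=\lim_{r\to0^+}I_{0,f}(r)$ exists. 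A change of variables in \eqref{eq:defblowup} gives, in $m=2$,
\[
\Dir(f_\rho,B_R)=\frac{D_{0,f}(R\rho)}{D_{0,f}(\rho)},\qquad
H_{0,f_\rho}(R)=\frac{H_{0,f}(R\rho)}{\rho\,D_{0,f}(\rho)},\qquad
I_{0,f_\rho}(R)=I_{0,f}(R\rho)\,.
\]
Combining the first identity with the comparison estimates \eqref{eq:Dratio} and the fact that $I_{0,f}(R\rho_k)\to\alpha$, one sees that for each fixed $R$ the energies $\Dir(f_{\rho_k},B_R)$ are bounded uniformly in $k$; note also $\Dir(f_{\rho_k},B_1)=1$ for every $k$.

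\textbf{Step 2 (compactness).} Rescaling Theorem \ref{thm:Holder} gives a uniform-in-$k$ bound on the H\"older seminorm of $f_{\rho_k}^\pm$ on every closed half-ball $\overline{B_R^\pm}$ in terms of $\Dir(f_{\rho_k},B_{2R})^{1/2}$, which is bounded by Step 1; together with $f_{\rho_k}^+(0)=Q\a{0}$ (so the maps are equibounded on compacta) this makes $\{f_{\rho_k}^\pm\}$ equicontinuous on every $\overline{B_R^\pm}$. A diagonal Arzel\`a--Ascoli argument then produces a subsequence converging locally uniformly to a pair $g=(g^+,g^-)$; the uniform $W^{1,2}$ bounds of Step 1 show $g\in W^{1,2}_{\mathrm{loc}}$ (the weak $W^{1,2}$-limit coincides with $g$), and the relations $g^+|_{\RR}=g^-|_{\RR}+\a{0}$ and $g^+(0)=Q\a{0}$ pass to the limit.

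\textbf{Step 3 (minimality and strong convergence, part (a)).} Let $h$ be any admissible competitor for $g$ on a ball $B_R$ (same interface $(\RR,0)$, agreeing with $g$ outside a compact subset of $B_R$). Using Lemma \ref{lm:interpolation} (the case $m=2$) on a thin shell $B_R\setminus B_{R(1-\eta)}$, $\eta=1/N$, we glue a dilation of $h$ inside $B_{R(1-\eta)}$ to $f_{\rho_k}$ near $\partial B_R$; the interpolation preserves the interface $(\RR,0)$, the dilation leaves the Dirichlet energy unchanged in two dimensions, and by \eqref{eq:interpolation2} the shell contributes at most $C\eta\big(\Dir(g,\partial B_R)+\Dir(f_{\rho_k},\partial B_R)\big)+\tfrac{C}{\eta}\sup_{\partial B_R}\GG(g,f_{\rho_k})$. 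Selecting a good radius $R$ (a.e. value works, by Fubini) and passing to a further subsequence so that $\Dir(f_{\rho_k},\partial B_R)$ stays finite, then letting first $k\to\infty$ — so the last error vanishes by uniform convergence — and then $\eta\to0$, minimality of $f_{\rho_k}$ on $B_R$ gives $\limsup_k\Dir(f_{\rho_k},B_R)\le\Dir(h,B_R)$. Taking $h=g$ and combining with lower semicontinuity of the Dirichlet energy yields $\lim_k\Dir(f_{\rho_k},B_R)=\Dir(g,B_R)$, hence strong $W^{1,2}(B_R)$ convergence and $\Dir(g,B_1)=1$; taking a general $h$ gives $\Dir(g,B_R)\le\Dir(h,B_R)$, so $g$ is Dir-minimizing on $B_R$, and therefore on every bounded $U$. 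The main obstacle is precisely this step: one must produce, for each large $k$, a \emph{genuine} competitor for the minimizer $f_{\rho_k}$ which has exactly the interface $(\RR,0)$ — whence the use of Lemma \ref{lm:interpolation} in place of a naive linear interpolation — and whose energy matches $\Dir(h,B_R)$ up to errors that are killed by the good-radius choice and the limits $k\to\infty$, $\eta\to0$.

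\textbf{Step 4 (homogeneity, part (b)).} Strong $W^{1,2}_{\mathrm{loc}}$ convergence gives $D_{0,f_{\rho_k}}(r)\to D_{0,g}(r)$ for a.e.\ $r$, uniform convergence gives $H_{0,f_{\rho_k}}(r)\to H_{0,g}(r)$ for every $r$, and the scaling identity gives $I_{0,f_{\rho_k}}(r)=I_{0,f}(r\rho_k)\to\alpha$. If $\alpha=0$ then \eqref{eq:Dratio} forces $D_{0,f_{\rho_k}}(r)\to1$, hence $H_{0,f_{\rho_k}}(r)\to\infty$, contradicting the uniform H\"older bound (which gives $H_{0,f_{\rho_k}}(r)\le C r^{2\beta+1}$); therefore $\alpha>0$, $H_{0,f_{\rho_k}}(r)=rD_{0,f_{\rho_k}}(r)/I_{0,f_{\rho_k}}(r)$ has a strictly positive limit, and $I_{0,g}(r)=rD_{0,g}(r)/H_{0,g}(r)=\alpha$ for a.e.\ $r$. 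Since $g$ is a nontrivial Dir-minimizer with $g^+(0)=Q\a{0}$, Theorem \ref{thm:monot} makes $I_{0,g}$ absolutely continuous, so $I_{0,g}\equiv\alpha$, and Corollary \ref{cor:hom} gives $g(x)=|x|^\alpha g(x/|x|)$ with $\alpha=I_{0,f}(0)>0$, completing part (b).
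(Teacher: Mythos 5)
Your proposal follows essentially the same route as the paper's own proof: uniform energy bounds from \eqref{eq:Dratio} and frequency monotonicity, equi-H\"older continuity via Theorem \ref{thm:Holder} plus Arzel\`a--Ascoli, the planar interpolation Lemma \ref{lm:interpolation} to transplant competitors and conclude both strong $W^{1,2}$ convergence and minimality of the limit, and Corollary \ref{cor:hom} to deduce homogeneity. The structure and lemmas used are the same.

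One small slip in Step 4 worth flagging. You claim that if $\alpha=0$ then ``\eqref{eq:Dratio} forces $D_{0,f_{\rho_k}}(r)\to 1$.'' This is not quite right: the lower bound in \eqref{eq:Dratio} (with $s=r$, $t=1$, $m=2$) is $\frac{I_{f_{\rho_k}}(r)}{I_{f_{\rho_k}}(1)}\,r^{2I_{f_{\rho_k}}(1)}\,D_{0,f_{\rho_k}}(1)$, and the ratio $I_{f_{\rho_k}}(r)/I_{f_{\rho_k}}(1)=I_f(r\rho_k)/I_f(\rho_k)$ need not tend to $1$ when $\alpha=0$ (both numerator and denominator go to $0$ and the quotient can go to $0$), so \eqref{eq:Dratio} alone gives only $\limsup_k D_{0,f_{\rho_k}}(r)\leq 1$. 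The conclusion you actually need, however, is already at hand from Step 3: strong $W^{1,2}$ convergence gives $D_{0,f_{\rho_k}}(r)\to D_{0,g}(r)$ for a.e.\ $r$, and since $D_{0,g}(1)=1>0$ and $D_{0,g}$ is nondecreasing, $D_{0,g}(r)>0$ for $r$ near $1$; with that, $H_{0,f_{\rho_k}}(r)=rD_{0,f_{\rho_k}}(r)/I_{0,f_{\rho_k}}(r)\to\infty$ if $\alpha=0$, and the contradiction with the uniform H\"older bound stands. (The paper derives $\alpha>0$ after homogeneity, noting that a $0$-homogeneous map with $g(0)=Q\a{0}$ must be constant $Q\a{0}$, contradicting $\Dir(g,B_1)=1$; either order works.) You should also make explicit, when invoking Theorem \ref{thm:monot} and Corollary \ref{cor:hom} for $g$, that the first alternative of Theorem \ref{thm:monot} (local constancy near $0$) is ruled out, e.g.\ by the same observation that $D_{0,g}(1)=1$ forces $D_{0,g}(r)>0$ for $r$ close to $1$ and then the identity $I_{0,g}(r)\equiv\alpha$ propagates down to $r=0^+$.
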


From now, any limit of a sequence of rescalings $\{f_{\rho_k}\}_k$ with $\rho_k\downarrow 0$ will be called a {\em tangent function}. 
A feature of the $2$-dimensional setting is that the compactness theorem above can be considerably strengthened: analogously to the ``interior case'', cf. \cite[Theorem 5.3]{DS}, we can prove that the tangent function at a given point is unique and that the rescaling converge at a suitable rate to it. The key is to first show a suitable rate of convergence for the frequency function.

\begin{prop}[Rate of convergence, analogue of Proposition 5.2 in \cite{DS}]\label{prop:rate}
	Let $f$ be as in Theorem \ref{thm:tangent} and set $\alpha = I_{0,f}(0)$. 
	Then there exist constants $r_0, \beta, C, H_0, D_0 >0$ such that for every $0<r\leq r_0$,
	%
	\begin{equation}\label{eq:decayfr}
		0\leq I(r) - \alpha \leq C r^\beta,
	\end{equation}
	\begin{equation}\label{eq:decayDe}
		0\leq \frac{H(r)}{r^{2\alpha+1}} - H_0 \leq C r^\beta, \quad 0 \leq \frac{D(r)}{r^{2\alpha}} - D_0 \leq C r^\beta.
	\end{equation}
\end{prop}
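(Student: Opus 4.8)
The plan is to run the three–stage scheme of \cite[Proposition 5.2]{DS}: first extract from the monotonicity of the frequency all the one–sided bounds and the existence of the limits $H_0,D_0$; then prove a differential inequality for $I$ yielding the polynomial rate \eqref{eq:decayfr}; finally feed this rate into the explicit monotonicity formulas to obtain \eqref{eq:decayDe}.

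\emph{Stage 1 (soft part).} Since $f$ is as in Theorem \ref{thm:tangent}, $\Dir(f,B_\rho)>0$ for every $\rho$, so we are in the nondegenerate alternative of Theorem \ref{thm:monot}: $I=I_{0,f}$ is absolutely continuous, nondecreasing, positive, with $\lim_{r\to0}I(r)=\alpha>0$; hence $I(r)\ge\alpha$ for all $r$, and $H(r),D(r)>0$ for all $r$ (otherwise $f\equiv Q\a{0}$ near $0$, against $\Dir>0$). From \eqref{eq:tmpH} with $m=2$ one has $\frac{d}{dr}\log\big(H(r)/r^{2\alpha+1}\big)=\frac{2(I(r)-\alpha)}{r}\ge0$, and from $D(r)=r^{-1}I(r)H(r)$ one gets $\frac{d}{dr}\log\big(D(r)/r^{2\alpha}\big)=\frac{I'(r)}{I(r)}+\frac{2(I(r)-\alpha)}{r}\ge0$; thus $H(r)/r^{2\alpha+1}$ and $D(r)/r^{2\alpha}$ are nondecreasing, hence converge as $r\to0$ to finite $H_0\ge0$, $D_0\ge0$ and stay above them, with $D_0=\alpha H_0$. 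If $I(r_1)=\alpha$ for some $r_1>0$ then $I\equiv\alpha$ on $(0,r_1)$, and by Corollary \ref{cor:hom} $f$ is $\alpha$–homogeneous on $B_{r_1}$, so all assertions hold trivially; from now on we may assume $I(r)>\alpha$ for every $r>0$.

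\emph{Stage 2 (the crux).} The target is the differential inequality
\begin{equation}\label{eq:keydiffplan}
 r\,I'(r)\ \ge\ c_0\,\big(I(r)-\alpha\big)\qquad\text{for all } 0<r\le r_0,
\end{equation}
with $c_0=c_0(Q)>0$. Granting it, dividing by $I(r)-\alpha>0$ and integrating on $[r,r_0]$ gives $I(r)-\alpha\le (I(r_0)-\alpha)(r/r_0)^{c_0}$, which is \eqref{eq:decayfr} with $\beta=c_0$. To prove \eqref{eq:keydiffplan} I would start from the exact expression for $I'(r)$: combining the inner and outer variations \eqref{eq:innervar}--\eqref{eq:outervar} exactly as in the proof of Theorem \ref{thm:monot}, $H(r)^2I'(r)$ equals $2r$ times a Cauchy--Schwarz defect of the boundary integrals, a nonnegative quantity that vanishes precisely when $f$ is, radially on $\partial B_r$, an eigenfunction (equivalently $I(r)$–homogeneous). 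The whole point is a \emph{scale–uniform} lower bound for this defect by $c\,H(r)D(r)\,(I(r)-\alpha)$, which yields \eqref{eq:keydiffplan}. This is where the planar conformal structure is decisive: unrolling $f|_{\partial B_r}$ as in Lemma \ref{lm:unrolling} and Proposition \ref{prop:decomp} reduces the circle behaviour to scalar functions and makes the admissible homogeneities quantized (into $\tfrac{1}{2Q-1}\mathbb Z$ on the irreducible $(Q-\tfrac12)$–piece and $\tfrac1{Q_j}\mathbb Z$ on the $Q_j$–valued pieces), so that $\alpha$ is separated from every other admissible homogeneity by a fixed gap $\delta_0(Q)>0$; expanding $f|_{\partial B_r}$ against the corresponding modes and comparing with the normal derivative converts this gap into the lower bound on the defect. (Alternatively, \eqref{eq:keydiffplan} follows by contradiction and compactness: should it fail along $r_k\downarrow0$, the rescalings $f_{0,r_k}$ would, by Theorem \ref{thm:tangent}, converge to an $\alpha$–homogeneous tangent function at which the defect degenerates faster than $I-\alpha$, incompatible with $I(r)>\alpha$ at the scales $r_k$.)

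\emph{Stage 3 (transfer).} Integrating $\frac{d}{dr}\log\big(H(r)/r^{2\alpha+1}\big)=\frac{2(I(r)-\alpha)}{r}$ over $(0,r]$ and using \eqref{eq:decayfr},
\[
 \log\frac{H(r)/r^{2\alpha+1}}{H_0}\ =\ 2\int_0^r\frac{I(s)-\alpha}{s}\,ds\ \le\ \frac{2C}{\beta}\,r^{\beta}.
\]
The finiteness of the integral on the right — a consequence of \eqref{eq:decayfr}, proved in Stage 2 without using $H_0>0$ — is equivalent to $H_0>0$ (an infinite value would force $H(r)/r^{2\alpha+1}\to0$), so $H_0>0$ and hence $D_0=\alpha H_0>0$. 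Exponentiating the display gives $0\le H(r)/r^{2\alpha+1}-H_0\le C'r^{\beta}$ for $r$ small, the lower bound being the monotonicity of Stage 1. Finally, from $D(r)/r^{2\alpha}=I(r)\,H(r)/r^{2\alpha+1}$ we split
\[
 \frac{D(r)}{r^{2\alpha}}-D_0\ =\ \big(I(r)-\alpha\big)\frac{H(r)}{r^{2\alpha+1}}\ +\ \alpha\Big(\frac{H(r)}{r^{2\alpha+1}}-H_0\Big),
\]
and both summands are $O(r^{\beta})$ by \eqref{eq:decayfr} and by the bound just proven, while nonnegativity is again Stage 1 monotonicity; enlarging $C$ and shrinking $r_0$ finishes the proof. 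The main obstacle is clearly \eqref{eq:keydiffplan}: everything else is bookkeeping with the monotonicity formulas, whereas the scale–uniform lower bound on the Cauchy--Schwarz defect in $I'(r)$ by a multiple of $I(r)-\alpha$ is the genuine difficulty, requiring the planar unrolling (and the resulting discreteness of admissible homogeneities) or a compactness/rigidity argument in its place.
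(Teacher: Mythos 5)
Your proposal tracks the paper's proof closely: Stage 1 is the same ``soft'' bookkeeping from the monotonicity formulas, Stage 2 targets the differential inequality that the paper also proves (the paper's version is $I'(r)\ge \tfrac{2}{r}(\alpha+\beta-I(r))(I(r)-\alpha)$, which immediately yields your $rI'(r)\ge c_0(I(r)-\alpha)$ once $r$ is small enough that $I(r)\le\alpha+\beta/2$), and Stage 3 is a correct, if slightly differently packaged, transfer of the rate for $I$ to $H$ and $D$. The mechanism you identify for Stage 2 --- unroll $f|_{\partial B_r}$ via Lemma \ref{lm:unrolling} and Proposition \ref{prop:decomp}, Fourier-expand the unrolled scalar traces, exploit that the admissible homogeneities are quantized in $\tfrac{1}{2Q_0-1}\ZZ$ and $\tfrac{1}{Q_j}\ZZ$ --- is exactly what the paper does; the paper reduces \eqref{eq:IODE} to the algebraic inequality $(2\alpha+\beta)D(r)\le \tfrac12 rD'(r)+\tfrac{\alpha(\alpha+\beta)}{r}H(r)$, computes $D'(r)$ and $H(r)$ in Fourier coefficients, bounds $D(r)$ from above by the explicit harmonic extension competitor, and then chooses $\beta$ so that the intervals $(\alpha(2Q_0-1),(\alpha+\beta)(2Q_0-1))$ and $(\alpha Q_j,(\alpha+\beta)Q_j)$ contain no integers.

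Two small cautions. First, the gap constant is \emph{not} $c_0(Q)$ only: the paper's
\[
\beta=\min_{1\le k\le Q}\Big\{\tfrac{\lfloor\alpha k\rfloor+1-\alpha k}{k},\ \tfrac{\lfloor\alpha(2k-1)\rfloor+1-\alpha(2k-1)}{2k-1}\Big\}
\]
depends on $\alpha=I_{0,f}(0)$ as well as $Q$ (and can be made arbitrarily small for fixed $Q$ by varying $\alpha$); this is harmless for the statement, which fixes $f$, but your parenthetical ``$c_0=c_0(Q)$'' should be corrected. Second, the ``compactness/rigidity'' alternative you sketch for Stage 2 does not obviously deliver a \emph{polynomial} rate: at the tangent function, both the Cauchy--Schwarz defect and $I-\alpha$ vanish, so a soft contradiction gives no information about their ratio without an additional {\L}ojasiewicz-type input. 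The Fourier/unrolling computation is thus not a dispensable shortcut but the core of the argument, exactly as you acknowledge at the end; your review should treat that route as the proof and the compactness remark as heuristic at best.
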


\begin{theorem}[Unique tangent map, analogue of Theorem 5.3 \cite{DS}]\label{thm:cvtang} 
	Let $f\in W^{1,2}(\DD, \AQt)$ be as in Theorem \ref{thm:tangent} and denote by $\beta$ the exponent of the decay estimate \eqref{eq:decayfr}. Then the tangent function $f_0$ to $f$ at $0$ is unique and, moreover,
\begin{equation}\label{e:decay}
\|\mathcal{G} (f_{0, \rho}, f_0)\|^2_{L^2 (\mathbb S^1)}\leq C \rho^\beta\, .
\end{equation}
\end{theorem}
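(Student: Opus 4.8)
The plan is to mirror the strategy of Theorem 5.3 in \cite{DS}: first establish that the rescalings $f_{0,\rho}$ form a Cauchy family in $L^2(\mathbb S^1)$ as $\rho\to 0$, with the quantitative rate \eqref{e:decay}, and then conclude uniqueness of the limit. The engine is the frequency pinching estimate \eqref{eq:decayfr} together with the $H$-decay \eqref{eq:decayDe} from Proposition \ref{prop:rate}. First I would record that, because $I_{0,f}(0)=\alpha$ and $D(r)/r^{2\alpha}\to D_0>0$, the normalization $\sqrt{\Dir(f,B_\rho)}$ in the definition of $f_{0,\rho}$ is comparable to $\rho^{\alpha}$ up to a factor $1+O(\rho^\beta)$; hence it suffices to control the $L^2(\mathbb S^1)$-oscillation of the plainly rescaled maps $x\mapsto f(\rho x)/\rho^\alpha$ and then transfer the estimate.

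The core computation is a derivative estimate for the radial variation of $f$ along $\partial B_r$, measured in $L^2$. Writing $f$ in a measurable selection $f^\pm=\sum_j\a{f^\pm_j}$ on $\partial B_r$ and using the first-variation identity \eqref{eq:outervar} together with the chain rule, one gets that the ``angular profile'' $\theta\mapsto f(r,\theta)/r^\alpha$ has radial derivative whose $L^2(\mathbb S^1)$-norm is bounded, up to constants, by $r^{-1}\sqrt{H(r)/r^{2\alpha+1}}\,\sqrt{I(r)-\alpha}$; this is exactly the step where the monotonicity proof (equality case in Cauchy–Schwarz, cf. the proof of Theorem \ref{thm:monot} and Corollary \ref{cor:hom}) quantifies how far $f$ is from being $\alpha$-homogeneous in terms of the frequency gap. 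Plugging in \eqref{eq:decayfr} and the boundedness of $H(r)/r^{2\alpha+1}$ from \eqref{eq:decayDe} yields
\[
\Big\|\frac{d}{dr}\Big(\tfrac{f(r,\cdot)}{r^\alpha}\Big)\Big\|_{L^2(\mathbb S^1)}\le C\,r^{\beta/2-1}\,,
\]
which is integrable near $r=0$ since $\beta>0$. Integrating in $r$ from $0$ to $\rho$ shows that $f(\rho,\cdot)/\rho^\alpha$ converges in $L^2(\mathbb S^1)$ to some limit $h$ as $\rho\to0$, with $\|f(\rho,\cdot)/\rho^\alpha-h\|_{L^2(\mathbb S^1)}\le C\rho^{\beta/2}$; a standard interpolation/telescoping between dyadic scales upgrades this to control over all of $B_1$ rather than just the unit circle, and after correcting the normalization by the $\rho^\alpha\sim(1+O(\rho^\beta))\sqrt{\Dir(f,B_\rho)}$ comparison we obtain \eqref{e:decay} (possibly with $\beta$ replaced by a smaller positive exponent, which we may relabel).

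Uniqueness of the tangent function is then immediate: any tangent function $f_0$ arises as a locally uniform (hence $L^2$) limit of some sequence $f_{0,\rho_k}$ by Theorem \ref{thm:tangent}, and the Cauchy estimate just proven forces all such limits to coincide; by Theorem \ref{thm:tangent}(b) the common limit is $\alpha$-homogeneous, and \eqref{e:decay} holds for the continuous family $\rho\mapsto f_{0,\rho}$, not merely along subsequences. I expect the main obstacle to be the first technical step above — justifying the differential inequality for $\frac{d}{dr}(f(r,\cdot)/r^\alpha)$ in the $L^2$ sense in the multivalued, two-sided setting: one must choose measurable selections on $\partial B_r$ consistently in $r$ (using absolute continuity of $f$ on almost every ray, as in \cite[Propositions 1.12, 2.8]{DS}), handle the interface condition $f^+|_\gamma=f^-|_\gamma$ so that the selections on $\Omega^+$ and $\Omega^-$ match up and the extension-by-zero trick used in the frequency monotonicity proof applies, and control the error terms coming from the fact that $f$ need not be exactly homogeneous at any finite scale. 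Once the $L^2$-Cauchy estimate on circles is in hand, the passage to the full ball and the conclusion are routine, following \cite[Theorem 5.3]{DS} essentially verbatim.
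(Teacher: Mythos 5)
Your overall strategy coincides with the paper's: control the $L^2(\So)$-oscillation of the "dominant" plainly rescaled profile $h_\rho(1,\theta)=\rho^{-\alpha}f(\rho,\theta)$ via the frequency pinching, telescope over dyadic scales to get a Cauchy sequence, use the equi-H\"older bound from Theorem \ref{thm:Holder} to upgrade to uniform convergence, and transfer back to $f_{0,\rho}$ through the comparison $\sqrt{\Dir(f,B_\rho)}=\rho^\alpha\sqrt{D_0}(1+O(\rho^{\beta/2}))$ provided by \eqref{eq:decayDe}.

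There is, however, a gap in the stated form of the key derivative bound. You assert
\[
\Big\|\frac{d}{dr}\Big(\tfrac{f(r,\cdot)}{r^\alpha}\Big)\Big\|_{L^2(\So)}\le C\,r^{-1}\sqrt{\tfrac{H(r)}{r^{2\alpha+1}}}\,\sqrt{I(r)-\alpha}\le C\,r^{\beta/2-1}\,,
\]
but a direct computation from the first variations gives the exact identity
\[
\Big\|\frac{d}{dr}\Big(\tfrac{f(r,\cdot)}{r^\alpha}\Big)\Big\|^2_{L^2(\So)}
= \frac{\alpha^2 H(r)}{r^{2\alpha+3}} + \frac{D'(r)}{2r^{2\alpha+1}} - \frac{2\alpha D(r)}{r^{2\alpha+2}}
= \frac{H(r)}{r^{2\alpha+3}}\Big[(I(r)-\alpha)^2 + \tfrac{r}{2}I'(r)\Big]\,.
\]
The term $\tfrac{r}{2}I'(r)$ cannot be bounded pointwise by $C r^{\beta}$: the pinching estimate \eqref{eq:decayfr} controls $\int_0^r I'$ but not $I'(r)$ itself, which can spike on a set of small measure. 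So there is no pointwise differential inequality of the claimed form, and the proposed direct integration of the radial derivative is not legitimate as stated. The paper avoids this exactly by never going to a pointwise bound: it applies Cauchy--Schwarz in $t$ first, obtaining
\[
\int_0^{2\pi}\GG(h_r,h_s)^2\,d\theta \le (r-s)\int_s^r \Big\|\partial_t\big(\tfrac{f(t,\cdot)}{t^\alpha}\big)\Big\|^2_{L^2(\So)}\,dt\,,
\]
rewrites the integrand as $\tfrac{1}{2t}\big(\tfrac{D(t)}{t^{2\alpha}}\big)'$ plus a non-positive remainder proportional to $(\alpha - I(t))$, discards the remainder, and integrates by parts to reduce everything to the already-established decay $0\le D(t)/t^{2\alpha}-D_0\le C t^\beta$. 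If you replace your pointwise derivative bound with this Cauchy--Schwarz-plus-integration-by-parts argument, the rest of your outline (dyadic telescoping, uniform convergence via equi-H\"older continuity, transfer of the normalization) goes through exactly as in the paper.
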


\subsection{Compactness and tangent functions: Proof of Theorem \ref{thm:tangent}}
	Let $B_R$ denote a ball of sufficiently large radius $R\gg 1$. By the definition \eqref{eq:defblowup}, 
	\[ \Dir(f_\rho, B_R) = \frac{D(\rho R)}{D(\rho)}  \leq R^{m-2+2I(\rho R)} \frac{I(\rho)}{I(\rho R)} \leq R^{m-2+2(\alpha+1)}, \]
	where we use the estimate \eqref{eq:Dratio} for the first inequality, and the properties of the frequency function for the second. Since $f_\rho$'s are all Dir-minimizing with interface $(\RR, 0)$, by Theorem \ref{thm:Holder} they are locally equi-H\"older continuous. The assumption $f^+(0) = Q\llbracket 0\rrbracket$ implies $f^+_\rho(0) = Q \llbracket 0 \rrbracket$ and $f^-_{\rho}(0) = (Q-1) \llbracket 0 \rrbracket$ for all $\rho$. Thus $f_\rho$'s are locally uniformly bounded, and by Arzel\`a-Ascoli Theorem a subsequence (not relabelled) converges uniformly on compact sets to a continuous function $g=(g^+, g^-)$. (To use Arzel\`a-Ascoli Theorem, we may add to $f^-_\rho$ a zero sheet to get functions valued in the metric space $\AQ$.) In particular $g^+|_{\gamma} = g^-|_{\gamma} + \llbracket 0 \rrbracket$; moreover $f^+_{\rho_k}$ converges weakly in $W^{1,2}_{loc} (\RR^2_+, \AQ)$ to $g^+$, and $f^-_{\rho_k}$ converges weakly in $W^{1,2}_{loc} (\RR^2_-, \mathcal{A}_{Q-1})$ to $g^-$ (see \cite[Definition 2.9]{DS}). By \eqref{eq:repD} it follows then that $\Dir(g, B_r) \leq \liminf_{k\to\infty} \Dir(f_{\rho_k}, B_r)$ for all $r>0$.
	
	\textit{Proof of (a)}. Let $R>0$ be fixed. We will show that for any $0<r\leq R$,
	\begin{equation}\label{eq:limitmin}
		\Dir(g, B_r) = \liminf_{k\to\infty} \Dir(f_{\rho_k}, B_r) \text{ and } g|_{B_r} \text{ is Dir-minimizing with interface } (\RR, 0).
	\end{equation}  
	For any $R>0$, we will show \eqref{eq:limitmin} holds for all $r\leq R$.
	
	
	By Fatou's Lemma,
	\[ \int_0^R \liminf_{k\to\infty} \Dir(f_{\rho_k},\partial B_r) \, dr \leq \liminf_{k\to\infty} \int_0^R \Dir(f_{\rho_k}, \partial B_r) \, dr \leq \liminf_{k\to\infty} \Dir(f_{\rho_k}, B_R) \leq C<+\infty. \]
	Hence for almost every $r\in (0,R)$, the integrand of the first term is finite, and moreover by weak convergence
	\begin{equation}\label{eq:Dbdshell}
		\Dir(g,\partial B_r) \leq \liminf_{k\to\infty} \Dir(f_{\rho_k},\partial B_r) \leq M < +\infty.
	\end{equation} 
	
	For the sake of contradiction, assume that either one of the statement in \eqref{eq:limitmin} fails for such $r$, then there exists a function $h\in W^{1,2}(B_r, \AQt)$ with interface $(\RR, 0)$ such that
	\begin{equation}\label{eq:tmpcontra}
		h|_{\partial B_r} = g|_{\partial B_r} \text{ and } \Dir(h, B_r) < \liminf_{k\to\infty} \Dir(f_{\rho_k}, B_r). 
	\end{equation} 
	Let $\delta = 1/N <1/2$ to be fixed later, and consider the functions $\widetilde f_{k}$ on $B_r$ defined by
	\[ \widetilde f_k = \left\{\begin{array}{ll}
		\left( \frac{1}{1-\delta} \right)^{\frac{m-2}{2}} h\left( \frac{x}{1-\delta} \right), & \text{ for } x\in B_{(1-\delta)r}, \\
		h_k(x), & \text{ for } x\in B_r \setminus B_{(1-\delta)r},
	\end{array} \right. \]
	where the $h_k$'s are the interpolation functions provided by Lemma \ref{lm:interpolation} between the maps $f_{\rho_k}\in W^{1,2}(\partial B_r, \AQt)$ and $h\left(\frac{x}{1-\delta} \right)\in W^{1,2}(\partial B_{(1-\delta)r}, \AQt)$. Notice that $h_k$'s satisfy the boundary condition $h_k^+|_{\gamma} = h_k^-|_{\gamma} + \llbracket 0 \rrbracket$. By the minimality of $f_{\rho_k}$, \eqref{eq:interpolation} and changes of variables, we have
	\begin{align*}
		\Dir(f_{\rho_k}, B_r) & \leq \Dir(\widetilde f_k, B_r) \\
		& \leq \Dir\left( \left( \frac{1}{1-\delta} \right)^{\frac{m-2}{2}} h\left( \frac{x}{1-\delta} \right), B_{(1-\delta)r} \right) + \Dir(h_k, B_r \setminus B_{(1-\delta)r}) \\
		& \leq \Dir(h, B_{r}) + C\delta r \Dir(f_{\rho_k}, \partial B_r) + C \delta r \Dir(h, \partial B_r) + \frac{C}{\delta} \sup_{x\in \partial B_r} \GG(h(x), f_{\rho_k}(x)) \\
		& \leq \Dir(h, B_r) + C\delta R \Dir(f_{\rho_k}, \partial B_r) + C\delta R \Dir(g, \partial B_r) + \frac{C}{\delta} \sup_{x\in \partial B_r} \GG(g(x), f_{\rho_k}(x)).
	\end{align*}
	Passing $k\to \infty$, by the uniform convergence of $f_{\rho_k}$ to $g$, \eqref{eq:Dbdshell} and the assumption \eqref{eq:tmpcontra}, we get
	\begin{equation}
		\limsup_{k\to\infty} \Dir(f_{\rho_k}, B_r) \leq \Dir(h,B_r) + 2C\delta RM < \liminf_{k\to \infty} \Dir(f_{\rho_k}, B_r) + 2C\delta RM.
	\end{equation}
	We get a contradiction by choosing $\delta$ arbitrarily small. Therefore \eqref{eq:limitmin} holds for almost every $r\in (0,R)$. By the upper semi-continuity of $\Dir(g, B_r)$ in $r$, it follows that \eqref{eq:limitmin} holds for all $r\leq R$.
	
	\textit{Proof of (b).} We observe that for every $r>0$,
	\[ I_g(r) = \frac{r \Dir(g, B_r)}{\int_{\partial B_r} |g|^2} = \liminf_{k\to \infty} \frac{r \Dir(f_{\rho_k}, B_r)}{\int_{\partial B_r} |f_{\rho_k}|^2} = \liminf_{k\to \infty} \frac{r \rho_k \Dir(f, B_{r \rho_k})}{\int_{\partial B_{r\rho_k}} |f|^2} = I_f(0). \]
	Since $g$ is Dir-minimizing, by Corollary \ref{cor:hom} it is a homogeneous function with homogenity $\alpha = I_f(0)$. If $\alpha=0$, a continuous $0$-homogeneous function with $g(0) = Q\llbracket 0 \rrbracket$ is necessarily $g \equiv Q\llbracket 0 \rrbracket$. This is in contradiction with $\Dir(g, B_1) = \lim_k \Dir (f_{\rho_k}, B_1) = 1$, and thus $\alpha>0$.

\subsection{Rate of convergence: Proof of Proposition \ref{prop:rate}}
	\textit{Step 1.} We claim the following estimate holds for some $\beta>0$:
	\begin{equation}\label{eq:IODE}
		I'(r) \geq \frac{2}{r} \left(\alpha + \beta - I(r) \right) (I(r) - \alpha).
	\end{equation}
	
	Recall \eqref{eq:Hder}, we have
	\begin{equation}
		I'(r) = \frac{rD'(r)}{H(r)} - \frac{2I^2(r)}{r}.
	\end{equation}
	Thus \eqref{eq:IODE} is reduced to prove
	\begin{equation}\label{eq:dcclaim}
		(2\alpha+\beta) D(r) \leq \frac{r D'(r)}{2} + \frac{\alpha(\alpha+\beta)H(r)}{r}.
	\end{equation}
	
	Let $r$ be fixed, and let $g(\theta) := f(r e^{i\theta})$. Consider the decomposition of $g(\theta)$ as in Proposition $g= g_0 + \sum_{j=1}^J g_j$, where $g_0 \in W^{1,2}(\So, \mathcal{A}_{Q_0}^{\pm})$ and $g_j \in W^{1,2}(\So, \mathcal{A}_{Q_j})$ are irreducible maps. Recall that for each irreducible $g_j$, we can find $\zeta_j\in W^{1,2}(\So, \RR^n)$ such that
	\begin{equation}
		g_j(\theta) = \sum_{i=1}^{Q_j} \left\llbracket \zeta_j\left( \frac{\theta + 2\pi i}{Q_j} \right) \right\rrbracket.
	\end{equation}
	We write the Fourier expansions of $\zeta_j$'s as
	\begin{equation}
		\zeta_j(\theta) = \frac{a_{j,0}}{2} + \sum_{l=1}^\infty \left(a_{j,l} \cos(l\theta) + b_{j,l} \sin(l\theta) \right), \quad \theta\in [0,2\pi].
	\end{equation} 
	Suppose $g_0$ unrolls to a function $\zeta_0: \So \to \RR^n$, as in Lemma \ref{lm:unrolling}. The boundary condition $g_0^+|_{\gamma} = g_0^-|_{\gamma} + \llbracket 0 \rrbracket$ implies $\zeta_0(0) = \zeta_0(2\pi) = 0$. Hence we write the Fourier expansion of $\zeta_0$ as
	\[ 
	\zeta_0(\theta) = \sum_{l=1}^\infty c_l \sin\left( \frac{ l\theta}{2} \right), \quad \theta \in [0,2\pi].
	\]
	Recall \eqref{eq:innervar} and Lemma \ref{lm:unrolling}, we get
	\begin{align}
		D'(r) = 2 \Dir(f, \partial B_r) & = \frac{2}{r} \Dir(g, \So) 
		= \frac{2}{r} \sum_{j=0}^{J} \Dir(g_j, \So) \nonumber \\
		& = \frac{2}{r} \left( \frac{2}{2Q_0-1} \Dir(\zeta_0, \So) + \sum_{j=1}^{J} \frac{1}{Q_j} \Dir(\zeta_j, \So) \right) \nonumber \\
		& = \frac{2\pi}{r} \left( \frac{1}{2(2Q_0-1)}   \sum_{l} c_l^2  l^2 + \sum_{j=1}^{J} \sum_l \frac{\left( a_{j,l}^2 + b_{j,l}^2 \right) l^2 }{Q_j}\right),\label{eq:dc1}
	\end{align}
	and 
	\begin{align}
		H(r) &= \int_{\partial B_r} |f|^2  = r \int_{\So} |g|^2 
		= r \left( \frac{2Q_0-1}{2} \int_{\So} |\zeta_0|^2 + \sum_{j=1}^J Q_j \int_{\So} |\zeta_{j}|^2 \right) \nonumber \\
		& = \pi r \left[ \frac{2Q_{0}-1}{2}  \sum_l  c_l^2   + \sum_{j=1}^J Q_j \left(  \frac{a_{j,0}^2}{2} + \sum_l \left( a_{j,l}^2 + b_{j,l}^2 \right) \right) \right]. \label{eq:dc2}
	\end{align}
	On the other hand, consider a $W^{1,2}$-extension of $\zeta_0$ on $B_1$ 	\begin{equation}\label{eq:bdsheetext}
		\overline \zeta_0(\rho, \theta) = \sum_{l=1}^\infty \rho^{\frac{l}{2}} c_l \sin \left( \frac{l\theta}{2} \right),		
	\end{equation} 
	(note that $\overline \zeta_0$ is not harmonic at the origin), and the harmonic extension of each $\zeta_j$:
	\[ \overline \zeta_j(\rho, \theta) = \frac{a_{j,0}}{2} + \sum_{l=1}^\infty \rho^l \left(a_{j,l} \cos(l\theta) + b_{j,l} \sin(l\theta) \right), \quad j=1, \cdots, J, \]
	where $ \rho\leq 1 \text{ and } \theta\in [0,2\pi]$.
	We then construct a competitor $h$ of $f$ in $B_r$ as follows: $h(\rho e^{i\theta}) = \widehat h(\rho e^{i\theta}/r)$, where $\widehat h$, a function defined on $B_1$, is given by
	\[
		\widehat h(\rho e^{i\theta}) := \left( h_0^+(\rho e^{i\theta} ), h_0^-(\rho e^{i\theta}) \right) + \sum_{j=1}^J \sum_{k=0}^{Q_j} \left\llbracket \overline \zeta_j \left( \rho^{\frac{1}{Q_j}}, \frac{ \theta+ 2\pi k}{Q_j} \right) \right\rrbracket,
	\]
	and $(h_0^+, h_0^-)$ is a $\left(Q_0 - \frac12 \right)$-valued function defined by
	\[
		h_0^+(\rho e^{i\theta}) = \overline \zeta_0 \left(\rho^{\frac{2}{2Q_0-1}}, \frac{2\theta}{2Q_0-1} + \frac{4\pi}{2Q_0-1} (k-1) \right), \quad \theta\in [0,\pi], k=1,\cdots,Q_0,
	\]
	\[
		h_0^-(\rho e^{i\theta}) = \overline \zeta_0 \left(\rho^{\frac{2}{2Q_0-1}}, \frac{2\theta}{2Q_0-1} + \frac{4\pi}{2Q_0-1} (k-1) \right), \quad \theta\in [\pi, 2\pi], k=1,\cdots,Q_0-1.
	\]
	Notice that by definition \eqref{eq:bdsheetext},
	\[ h_0^+|_{\gamma} = h_0^-|_{\gamma} + \llbracket 0 \rrbracket, \]
	hence $\widehat h$ has interface $(\gamma,\varphi)$.
	A simple computation, combined with Lemma \ref{lm:unrolling} and \cite[Lemma 3.12]{DS}, shows
	\begin{align*}
		\Dir(h, B_r) = \Dir(\widehat h, B_1) = \sum_{j=0}^J \iint_{B_1} |D \overline\zeta_j|^2 = \frac{\pi}{2} \sum_l c_l^2 l + \pi \sum_{j=1}^J \sum_l (a_{j,l}^2 + b_{j,l}^2 ) l.
	\end{align*}
	Thus by the minimality of $f$, we know
	\begin{equation}\label{eq:dc3}
		D(r) \leq \Dir(h, B_r) = \frac{\pi}{2} \sum_l c_l^2 l + \pi \sum_{j=1}^J \sum_l (a_{j,l}^2 + b_{j,l}^2 ) l.
	\end{equation}

	We combine \eqref{eq:dc1}, \eqref{eq:dc2} and \eqref{eq:dc3} and plug into the left and right hand sides of \eqref{eq:dcclaim}. After simplifications, we show that it is enough to find $\beta>0$ satisfying
	\begin{equation}
		\left[l-\alpha(2Q_0-1) \right] \left[ l-(\alpha+\beta)(2Q_0-1) \right] \geq 0,
	\end{equation}	
	\begin{equation}
		\left(l-\alpha Q_j \right) \left[ l-(\alpha+\beta)Q_j \right] \geq 0, \quad j=1, \cdots, J,
	\end{equation}	
	for every $l\in \NN$.
	This is equivalent to say the intervals $(\alpha(2Q_0-1), (\alpha+\beta)(2Q_0-1) )$ and $(\alpha Q_j, (\alpha+\beta) Q_j )$ do not contain integer points. This is verified, if we choose
	\[ \beta = \min_{1\leq k\leq Q}\left\{ \frac{\lfloor\alpha k \rfloor + 1 - \alpha k }{k}, \frac{\lfloor\alpha (2k-1) \rfloor + 1 - \alpha (2k-1) }{2k-1} \right\}>0. \]
	
	\textit{Step 2.}
	Since $I(r)$ is monotone decreasing, there exists $r_0>0$ such that $I(r) \leq \alpha + \frac{\beta}{2}$ for all $r\leq r_0$. Hence \eqref{eq:IODE} implies that
	\begin{equation}
		I'(r) \geq \frac{\beta}{r} \left( I(r) - \alpha \right) \text{ for almost every } r\leq r_0. 
	\end{equation}
	Integrating the differential inequality, we get the desired estimate
	\begin{equation}\label{eq:tmpIccl}
		I(r) - \alpha \leq \left( \frac{r}{r_0} \right)^\beta \left( I(r_0) - \alpha \right) \leq Cr^\beta. 
	\end{equation} 
	
	Recall that the derivative of $H$ satisfies \eqref{eq:Hder}. In particular when $m=2$ we have
	\[ \left( \frac{H(r)}{r} \right)' = \frac{2D(r)}{r}. \]
	This implies
	\begin{equation}\label{eq:Hderfiner}
		\left( \log \frac{H(r)}{r^{2\alpha+1}} \right)' = \left( \log \frac{H(r)}{r} - \log r^{2\alpha} \right)' = \left(\log \frac{H(r)}{r} \right)' - \frac{2\alpha}{r} = \frac{2}{r} \left( I(r) - \alpha \right) \geq 0.
	\end{equation}
	Hence $\frac{H(r)}{r^{2\alpha+1}}$ is monotone increasing. In particular, its limit exists as $r\to 0+$:
	\begin{equation}
		\frac{H(r)}{r^{2\alpha+1}} \geq \lim_{r\to 0} \frac{H(r)}{r^{2\alpha+1}} =: H_0 \geq 0.
	\end{equation}
	On the other hand combining \eqref{eq:Hderfiner} and \eqref{eq:tmpIccl} we get
	\[ \left( \log \frac{H(r)}{r^{2\alpha+1}} \right)' \leq 2C r^{\beta-1}, \quad \text{ thus} \left( \log \frac{H(r)e^{-C_{\beta} r^\beta} }{r^{2\alpha+1}} \right)' \leq 0.\]
	Hence $\frac{H(r)e^{-C_{\beta} r^\beta} }{r^{2\alpha+1}}$ is monotone decreasing, and
	\[ \frac{H(r)e^{-C_{\beta} r^\beta} }{r^{2\alpha+1}} \leq \lim_{r\to 0} \frac{H(r)e^{-C_{\beta} r^\beta} }{r^{2\alpha+1}} = \lim_{r\to 0} \frac{H(r) }{r^{2\alpha+1}} = H_0. \]
	In particular $H_0>0$. Moreover
	\[ \frac{H(r)}{r^{2\alpha+1}} \left( 1-C_\beta r^\beta \right) \leq \frac{H(r)e^{-C_{\beta} r^\beta} }{r^{2\alpha+1}} \leq H_0, \]
	and we conclude that
	\[ \frac{H(r)}{r^{2\alpha+1}} - H_0 \leq C_\beta \, \frac{H(r)}{r^{2\alpha+1}} \, r^\beta \leq C_\beta \frac{H(r_0)}{r_0^{2\alpha+1}} r^\beta \leq C r^\beta. \]
	The last inequality of \eqref{eq:decayDe} follows from:
	\[ \frac{D(r)}{ r^{2\alpha} } - D_0 = \left( I(r) - I_0 \right) \frac{H(r)}{r^{2\alpha+1}} + I_0 \left( \frac{H(r)}{ r^{2\alpha+1} } - H_0 \right), \]
	where $I_0 = \alpha$ and $D_0 = I_0 H_0$.

\subsection{Uniqueness of the tangent map: Proof of Theorem \ref{thm:cvtang}}
	Without loss of generality, we assume $D_0 = 1$. By the estimate \eqref{eq:decayDe} and the definition of blow-up, it follows that
	\begin{equation}\label{eq:blowupscal}
		f_{\rho}(r,\theta) = \rho^{-\alpha} f(r\rho,\theta) \left( 1+ O\left(\rho^{\beta/2}\right) \right).
	\end{equation}
	It suffices to show the existence of a uniform limit for the ``dominant'' function 
\[
h_{\rho}(r,\theta) = \rho^{-\alpha} f(r\rho,\theta)\, .
\] 
Note that the function $h_{\rho}$ is homogeneous:
	\[ h_{\rho}(r,\theta) = \rho^{-\alpha} f(r\rho,\theta) = r^{\alpha} h_{r\rho}(1,\theta), \]
	it is enough to prove the existence of a uniform limit for $h_{\rho}|_{\So}$. Each function
	\[ h_{\rho}|_{\So} = h_{\rho}(1,\theta) = \rho^{-\alpha} f(\rho,\theta) \]
	is Dir-minimizing, so by Theorem \ref{thm:Holder} it is H\"older continuous with a uniform constant. We first show that $h_{\rho}|_{\So}$ has an $L^2$ limit.
	
	Let $\{T_i\}$ and $\{T'_i\}$ be countable dense subsets of $\AQ(\RR^n)$ and $\mathcal{A}_{Q-1}(\RR^n)$ respectively.
	We consider $r/2 \leq s\leq r$ and estimate
	\begin{align*}
		\int_0^{2\pi} \GG(h_r,h_s)^2 d\theta & = \int_0^{2\pi} \GG \left( \frac{f(r,\theta)}{r^{\alpha}}, \frac{f(s,\theta)}{s^{\alpha}} \right)^2 d\theta \\
		& = \int_0^{\pi} \sup_i \left| \GG \left( \frac{f^+(r,\theta)}{r^{\alpha}}, T_i \right) - \GG \left( \frac{f^+(s,\theta)}{s^{\alpha}}, T_i \right) \right| ^2 d\theta \, +  \\
		& \qquad \int_{\pi}^{2\pi} \sup_i \left| \GG \left( \frac{f^-(r,\theta)}{r^{\alpha}}, T'_i \right) - \GG \left( \frac{f^-(s,\theta)}{s^{\alpha}}, T'_i \right) \right| ^2 d\theta \\
		& \leq (r-s) \int_0^{\pi} \sup_i  \int_s^r \left|\frac{\partial}{\partial t} \GG \left(\frac{f^+(t,\theta)}{t^{\alpha}}, T_i \right) \right|^2 \, dt \,  d\theta \, + \\
		& \qquad (r-s) \int_{\pi}^{2\pi} \sup_i  \int_s^r \left|\frac{\partial}{\partial t} \GG \left(\frac{f^-(t,\theta)}{t^{\alpha}}, T'_i \right) \right|^2 \, dt \,  d\theta \\
		& \leq (r-s) \int_0^{\pi}  \int_s^r \left|\frac{\partial}{\partial t} \left(\frac{f^+(t,\theta)}{t^{\alpha}} \right) \right|^2 \, dt \,  d\theta \, +\\
&\qquad (r-s) \int_0^{\pi}  \int_s^r \left|\frac{\partial}{\partial t} \left(\frac{f^-(t,\theta)}{t^{\alpha}} \right) \right|^2 \, dt \,  d\theta \\
		& = (r-s) \int_0^{\pi} \int_s^r \sum_j \left\{ \alpha^2 \frac{|f^+_j|^2}{t^{2\alpha+2}} + \frac{|\partial_\nu f^+_j|^2}{t^{2\alpha}} - 2\alpha \frac{\langle \partial_\nu f^+_j, f^+_j \rangle}{ t^{2\alpha+1}} \right\} \, dt\, d\theta \, + \\
		& \qquad (r-s) \int_0^{\pi} \int_s^r \sum_j \left\{ \alpha^2 \frac{|f^-_j|^2}{t^{2\alpha+2}} + \frac{|\partial_\nu f^-_j|^2}{t^{2\alpha}} - 2\alpha \frac{\langle \partial_\nu f^-_j, f^-_j \rangle}{ t^{2\alpha+1}} \right\} \, dt\, d\theta \\
		& = (r-s) \int_s^r \left\{ \alpha^2 \frac{H(t)}{t^{2\alpha+3}} + \frac{D'(t)}{2t^{2\alpha+1}} - 2\alpha \frac{D(t)}{t^{2\alpha+2}} \right\} dt \\
		& = (r-s) \int_s^t \left\{ \frac{1}{2t} \left( \frac{D(t)}{t^{2\alpha}} \right)' + \alpha \frac{H(t)}{2t^{2\alpha+3}} \left(\alpha - I_{0,f}(t) \right) \right\} dt \\
		& \leq (r-s) \int_s^t  \frac{1}{2t} \left( \frac{D(t)}{t^{2\alpha}} - D_0 \right)' dt\,  \\
		& = (r-s) \left[ \frac{1}{2r} \left( \frac{D(r)}{r^{2\alpha}} - D_0 \right) - \frac{1}{2s} \left( \frac{D(s)}{s^{2\alpha}} - D_0 \right) \right] + \\
 &\qquad (r-s) \int_s^t  \frac{1}{2t^2} \left( \frac{D(t)}{t^{2\alpha}} - D_0 \right) dt \\
		& \leq Cr^{\beta}.
	\end{align*}
	Let $s\leq r$ be arbitrary, and let $l$ be a positive integer such that $r/2^{l+1} <s \leq r/2^l$. Iterating the above estimate we get
	\begin{align*}
		\|\GG(h_r,h_s)\|_{L^2(\So)} \leq \sum_{k=0}^l \|\GG\left(h_{r/2^k}, h_{r/2^{k+1}} \right) \|_{L^2(\So)} + \|\GG(h_{r/2^l}, h_s) \|_{L^2(\So)}   \leq \sum_{k=0}^l C\left( \frac{r}{2^k} \right)^{\frac{\beta}{2}} \leq C' r^{\frac{\beta}{2}}.
	\end{align*}
	This shows that $h_{\rho}|_{\So}$ is a Cauchy sequence in $L^2$, and thus converges to a limit function $h\in L^2(\So)$. Moreover, since $h_{\rho}$ is equi-H\"older continuous on $\So$, it follows that $h_{\rho}$ converges uniformly to $h$ on $\So$. In other words, $f_{\rho}$ converges locally uniformly to an $\alpha$-homogeneous function $g$, with $g(z) = |z|^\alpha h\left( \frac{z}{|z|} \right)$.

\section{Homogeneous Dir-minimizers}\label{s:classification}

In this section we study homogeneous Dir-minimizers in planar domains. We do not really give a complete characterization, but rather a set of necessary conditions that they have to satisfy. However, we will show below that each ``irreducible homogeneous piece'' in the classification is in fact minimizing. 

\begin{prop}[Characterization of tangent maps]\label{prop:chartang}
	Let $\alpha >0$ and let $f \in W^{1,2}_{loc} (\RR^2, \AQt)$ be a nontrivial $\alpha$-homogeneous Dir-minimizer with interface $(\RR, 0)$. 
Then the following alternatives hold:
	\begin{itemize}
	\item[(a)] either $\alpha = l$ for some $l\in \mathbb N$ and 
	\[
	\begin{split}
		f^+ & = k_0 \a{r^l \vec{c} \sin(l\theta)} + \sum_{j=1}^{J} k_j \left\llbracket r^{l} \left( \vec{a_j} \cos(l\theta) + \vec{b_j} \sin(l\theta) \right) \right\rrbracket =: f_0^+ + \sum_{j=1}^J k_j f_j, \\
		f^- & = (k_0-1) \a{r^l \vec{c} \sin(l\theta)} + \sum_{j=1}^{J} k_j \left\llbracket r^{l} \left( \vec{a_j} \cos(l\theta) + \vec{b_j} \sin(l\theta) \right) \right\rrbracket=:  f_0^- + \sum_{j=1}^J k_j f_j;
	\end{split}
	\]
\item[(b)] or $\alpha = \frac{n^*}{Q^*}$ for some coprime natural numbers $n^*, Q^*$ and 
\[
\begin{split}
		f^+ & = k_0 \a{0} + \sum_{j=1}^{J} k_j \sum_{z^{Q^*} = x \atop z=(r,\theta)} \left\llbracket r^{n^*} \left( \vec{a_j} \cos(n^* \theta) + \vec{b_j} \sin(n^* \theta) \right) \right\rrbracket =: f_0^+ + \sum_{j=1}^J k_j f_j, \\
		f^- & = (k_0-1) \a{0} + \sum_{j=1}^{J} k_j \sum_{z^{Q^*} = x \atop z=(r,\theta)} \left\llbracket r^{n^*} \left( \vec{a_j} \cos(n^* \theta) + \vec{b_j} \sin(n^* \theta) \right) \right\rrbracket =:  f_0^- + \sum_{j=1}^J k_j f_j\, ;
	\end{split}
\]
	\item[(c)] or $\alpha = \frac{2}{3}$ and
	\[
	\begin{split}
		f^+ & = \a{r^{\frac23} \vec{c} \sin \left( {\textstyle{\frac23}} \theta \right)} + \a{r^{\frac23} \vec{c} \sin \left( {\textstyle{\frac23}} (\theta + 2\pi) \right)} + \sum_{j=1}^J k_j \sum_{z^3 = x \atop z=(r,\theta)} \left\llbracket r^2 \left( \vec{a_j} \cos(2\theta) + \vec{b_j} \sin(2\theta) \right) \right\rrbracket \\
		& =: f_0^+ + \sum_{j=1}^J k_j f_j, \\
		f^- & = \a{r^{\frac23} \vec{c} \sin \left( {\textstyle{\frac23 \theta}} \right)}  + \sum_{j=1}^J k_j \sum_{z^3 = x \atop z=(r,\theta)} \left\llbracket r^2 \left( \vec{a_j} \cos(2\theta) + \vec{b_j} \sin(2\theta) \right) \right\rrbracket \\
		& =: f_0^- + \sum_{j=1}^J k_j f_j\, .
	\end{split}
	\]
	\end{itemize}
In all three cases the supports of $f_i(x)$ and $f_j(x)$ are disjoint for any $i\neq j\in \{0, 1, \cdots, J\}$ and any $x$ not at the origin. The constant vector $\vec{c} \neq 0$ and the pair of vectors $\vec{a}_j$ and $\vec{b}_j$ are linearly independent.

Finally, under the additional assumptions that $f$ satisfies \eqref{eq:avgsym} and its Dirichlet energy is positive, in the cases (a) and (b) the portions $\sum_j k_j f_j$ must necessarily be nontrivial, namely $J\geq 1$ and we cannot have $f = (f_0^+, f_0^-)$.
\end{prop}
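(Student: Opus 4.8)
The plan is to reduce the statement to the classification of \emph{irreducible} homogeneous Dir-minimizing pieces, using Proposition \ref{prop:decomp} together with the unrolling of Lemma \ref{lm:unrolling}. Since $f\in W^{1,2}_{loc}$ is $\alpha$-homogeneous, $\Dir(f,\DD)<\infty$ forces $f|_{\partial B_r}\in W^{1,2}$ for a.e.\ $r$ and hence, by homogeneity, $f|_{\So}\in W^{1,2}(\So,\AQt)$, so Proposition \ref{prop:decomp} gives $f|_{\So}=g_0+\sum_{j=1}^J g_j$ into irreducible pieces, with $g_0\in W^{1,2}(\So,\mathcal A_{Q_0}^{\pm})$ carrying the interface $(\RR,0)$ and each $g_j\in W^{1,2}(\So,\mathcal A_{Q_j})$. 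Extending each piece $\alpha$-homogeneously to $\RR^2$ one has $f=\tilde g_0+\sum_j\tilde g_j$, and choosing measurable selections compatible with this splitting the Dirichlet energy is additive: $\Dir(f,\DD)=\Dir(\tilde g_0,\DD)+\sum_j\Dir(\tilde g_j,\DD)$. Comparing $f$ with competitors that modify one summand at a time — which are admissible, since the interface offset sits only in $\tilde g_0$ — then shows that each $\tilde g_j$ is an interior $Q_j$-valued Dir-minimizer and $\tilde g_0$ is a $(Q_0-\tfrac12)$-valued Dir-minimizer with interface $(\RR,0)$, each again $\alpha$-homogeneous.

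Next I classify the pieces. A regular piece $\tilde g_j$ unrolls, as in \cite{DS}, to a single-valued $\zeta_j\in W^{1,2}(\So,\RR^n)$ with $\tilde g_j(x)=\sum_{z^{Q_j}=x}\a{\zeta_j(z)}$; minimality makes $\zeta_j$ harmonic and $\alpha$-homogeneity pins it to $\zeta_j=\rho^{N_j}(\vec a_j\cos(N_j\phi)+\vec b_j\sin(N_j\phi))$ with $N_j=\alpha Q_j\in\NN\setminus\{0\}$, irreducibility being equivalent to $\gcd(N_j,Q_j)=1$. The interface piece $\tilde g_0$ unwinds, via Proposition \ref{prop:decomp}(ii) and Lemma \ref{lm:unrolling}, to a function $\zeta_0$ on the slit disk $\DD\setminus[0,1]$ with vanishing trace on both sides of the slit; minimality forces $\zeta_0$ to minimize the Dirichlet energy of this mixed problem, whose homogeneous solutions are the eigenmodes $\vec c\,\rho^{l/2}\sin(l\theta/2)$, $l\in\NN\setminus\{0\}$, so $\zeta_0$ equals one of these (or $\vec c=0$, $\zeta_0\equiv 0$, which by irreducibility forces $Q_0=1$ and $\tilde g_0^+\equiv\a 0$). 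Rolling back gives $\alpha=N_j/Q_j=l/(2Q_0-1)$ for all $j$; writing $\alpha=n^*/Q^*$ in lowest terms, $\gcd(N_j,Q_j)=1$ forces $Q_j=Q^*$, $N_j=n^*$ for every regular piece. Finally, applying the cardinality conditions of Proposition \ref{prop:decomp}(i) to the explicit sheets $r^\alpha\vec c\,\sin(\alpha(\theta+2\pi(j-1)))$ of a \emph{nontrivial} $\tilde g_0$ and solving $\sin A=\sin B$ reduces irreducibility of $\tilde g_0$ to $\gcd(l,2Q_0-1)=1$ together with the requirement that none of the intervals $(\alpha s,\alpha(s+1))$, $s=1,\dots,2Q_0-3$, contains a half-integer. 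An elementary (if slightly tedious) case analysis shows this is possible only for $Q_0=1$ (whence $\alpha=l\in\NN$, case (a)) or $Q_0=2$, $l=2$, i.e.\ $\alpha=\tfrac23$ (case (c)), while every remaining $\alpha=n^*/Q^*$ can occur only with $\tilde g_0$ trivial (case (b)). All sheets vanishing on $\RR$ are of the form $r^l\vec c\,'\sin(l\theta)$; two distinct such sheets vanish simultaneously along the rays $\{l\theta\in\pi\ZZ\}$, which (by the disjointness proved below) is impossible, so they coincide, and collecting them with multiplicity $k_0$ — together with the interface sheet — produces the stated $f_0$, while the remaining $f_j$ are exactly those with $\vec a_j,\vec b_j$ not collinear, equivalently those not vanishing off the origin.

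The disjointness of the supports — which also yields $\vec c\neq 0$ in cases (a), (c) and the linear independence assertions — is where the minimality of $f$ is genuinely needed, and I expect it to be the main obstacle. Suppose $\supp(f_i(x_0))\cap\supp(f_j(x_0))\neq\emptyset$ for some $x_0\neq 0$ and $i\neq j$, with $f_i\not\equiv f_j$ after merging coinciding pieces. By homogeneity this persists along the whole ray $\RR_{>0}x_0$; the sheets involved are traces of real-analytic (indeed harmonic, after unrolling) functions defined across $\RR$, so their coincidence locus is a homogeneous, hence nowhere dense, union of finitely many rays through the origin, at least one of which lies in the open half-disk $\DD^+$. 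Along that ray $f$ fails to be regular in the sense of Definition \ref{def:intsing}, so the $Q$-valued Dir-minimizer $f|_{\DD^+}$ would have a continuum of interior singular points, contradicting Theorem \ref{thm:intsing}. Hence the supports are pairwise disjoint; feeding this back into the explicit forms forces $\vec c\neq 0$ in cases (a), (c) (otherwise the interface piece would be recorded under case (b)) and forces $\vec a_j,\vec b_j$ to be linearly independent (otherwise $f_j$ would vanish along a ray and collide there with $f_0$ or with some $f_{j'}$).

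Finally, for the addendum, assume \eqref{eq:avgsym}, $\Dir(f,\DD)>0$, and suppose $J=0$ in case (a) or (b). In case (b) this means $f^+=k_0\a 0$, $f^-=(k_0-1)\a 0$, so $\Dir(f,\DD)=0$, a contradiction. In case (a), $f^+=k_0\a{r^l\vec c\,\sin(l\theta)}$ and $f^-=(k_0-1)\a{r^l\vec c\,\sin(l\theta)}$, hence $\eta^+(x)=\eta^-(x)=r^l\vec c\,\sin(l\theta)$ and $\eta^-(\bar x)=-r^l\vec c\,\sin(l\theta)$; then $Q\eta^+(x)=(Q-1)\eta^-(\bar x)$ reads $(2Q-1)\,r^l\vec c\,\sin(l\theta)\equiv 0$, forcing $\vec c=0$ and again $\Dir(f,\DD)=0$, a contradiction. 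Therefore $J\geq 1$ in cases (a) and (b), i.e.\ $f$ cannot coincide with $(f_0^+,f_0^-)$.
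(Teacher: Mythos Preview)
Your overall strategy---decompose the trace on $\So$ into irreducible pieces, classify each piece via unrolling, then prove disjointness of supports---matches the paper's, and your classification of both the interior pieces (via \cite{DS}) and the interface piece (via the slit-disk eigenmode argument, which is essentially Lemma~\ref{l:char_irr_hom}) is correct. The addendum under \eqref{eq:avgsym} is also handled correctly.

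The genuine gap is in your disjointness argument. You assert that the coincidence locus of two distinct sheets is a finite union of rays ``at least one of which lies in the open half-disk $\DD^+$'', and then invoke Theorem~\ref{thm:intsing} on $f^+$. But this fails precisely when the only coincidence rays lie on the interface $\RR$. Concretely, take case (a) with $l=1$: two sheets $x_2\vec c_1$ and $x_2\vec c_2$ with $\vec c_1\neq\vec c_2$ coincide exactly on $\{x_2=0\}=\RR$, which is the \emph{boundary} of both $\DD^+$ and $\DD^-$, so interior regularity of $f^\pm$ says nothing there. Your merging step (``two distinct such sheets \ldots\ which (by the disjointness proved below) is impossible, so they coincide'') is then circular for $l=1$, since the disjointness you appeal to is exactly this unproved boundary case. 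The same issue arises whenever $0\in\supp\tilde g_j(0)$ or $0\in\supp\tilde g_j(\pi)$.

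The paper closes this gap with a re-decomposition trick that your argument is missing. If the $\tfrac12$-piece $\tilde f_0^+=\a{u_1}$ and a single-valued $\tilde f_j=\a{u_2}$ both vanish on $\RR$, one rewrites the $\tfrac32$-valued map $\tilde f_0+\tilde f_j$ differently: let $h$ be the single-valued function equal to $u_1$ on $\DD^+$ and $u_2$ on $\DD^-$ (continuous, since both vanish on $\RR$), and let the new $\tfrac12$-piece be $u_2|_{\DD^+}$. Minimality of $f$ then forces $h$ to be harmonic on \emph{all} of $\DD$, and Schwarz reflection (or unique continuation) yields $u_1\equiv u_2$ on $\DD^+$. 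An analogous swap handles the $Q_0=2$ case and the situation where $\tilde f_j$ is the trivial sheet $\a{0}$. Without this device the boundary coincidences at $\theta\in\{0,\pi\}$ are not excluded, and the asserted form of $f_0$ (all $k_0$ sheets identical, with a single $\vec c$) does not follow.
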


The proof of Proposition \ref{prop:chartang} is based on the following characterization of homogeneous {\em irreducible} Dirichlet minimizing $(Q_0-\frac{1}{2})$-maps with interface $(\mathbb R, 0)$. Note in particular that, in case (b) of the Proposition above, the trace of the corresponding homogeneous map on the circle is reducible, with the only exception of the very trivial map $f_0^+ = \a{0}$.

\begin{lemma}\label{l:char_irr_hom}
Let $g_0 \in W^{1,2} (\So, \AQt)$ with $Q= Q_0$ be irreducible and assume that it is the trace of a non-trivial homogeneous Dir-minimizing map $f_0$ on $\So$ with homogeneity $\alpha$.   
Then:
\begin{itemize}
\item[(i)] Either $Q_0=1$, $\alpha = l \in \mathbb N\setminus \{0\}$ and $f_0^+ (r, \theta) = \vec{c} r^l \sin (l \theta)$ is a classical homogeneous harmonic polynomial with trace $0$ on $\mathbb R$;
\item[(ii)] Or $Q_0 =2$, $\alpha = \frac{2}{3}$ and $f_0 = (f_0^+, f_0^-)$, where $f_0^+$ and $f_0^-$ are the maps of Proposition \ref{prop:chartang}(c). 
\end{itemize}
\end{lemma}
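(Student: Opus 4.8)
The plan is to transport the whole problem, via the unrolling of Lemma~\ref{lm:unrolling} together with the characterization of irreducibility in Proposition~\ref{prop:decomp}, to a single‑valued function on a slit disk, to use minimality to force that function to be harmonic, and finally to use irreducibility to rule out all but the two listed cases.

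Since $g_0$ is irreducible with interface $(\RR,0)$, Proposition~\ref{prop:decomp}(ii) gives a function $\zeta_0\in W^{1,2}(\So,\RR^n)$ with $\zeta_0(0)=\zeta_0(2\pi)=0$ to which $g_0$ unwinds. Let $\overline\zeta_0(\rho,\psi)=\rho^{\beta}\zeta_0(\psi)$ be its $\beta$-homogeneous extension, with $\beta:=\tfrac{\alpha(2Q_0-1)}{2}$. A direct check shows that the unrolling of $\overline\zeta_0$ (in the sense of Lemma~\ref{lm:unrolling}, with $Q=Q_0$) is $\alpha$-homogeneous, has trace $g_0$ on $\So$, and, precisely because $\zeta_0$ vanishes at the two endpoints of $[0,2\pi]$, has interface $(\RR,0)$; hence by uniqueness of the homogeneous extension it coincides with $f_0$. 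The crucial point is now minimality: by \eqref{eq:unrollD}, $\Dir(f_0,\DD)=\iint_{\DD}|D\overline\zeta_0|^2$, whereas for every $\eta\in W^{1,2}(\DD\setminus[0,1],\RR^n)$ that equals $\zeta_0$ on $\So$ and vanishes on both sides of the slit $[0,1]$, the unrolling of $\eta$ is a competitor for $f_0$ on $\DD$ with the same trace on $\partial\DD$ and with Dirichlet energy $\iint_{\DD}|D\eta|^2$. Therefore $\overline\zeta_0$ minimizes the Dirichlet energy on the crack domain $\DD\setminus[0,1]$ among all functions with its own boundary trace, so $\overline\zeta_0$ is harmonic there and vanishes on the slit. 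Separating variables on the slit plane, a homogeneous harmonic function vanishing on the two sides of the slit must be of the form $\overline\zeta_0(\rho,\psi)=\rho^{\beta}\vec c\,\sin(\beta\psi)$ with $\vec c\in\RR^n\setminus\{0\}$ (by nontriviality) and $k:=2\beta\in\NN\setminus\{0\}$; in particular $\alpha=\tfrac{k}{2Q_0-1}$.

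Feeding this back through the unrolling, $g_0^+(\theta)=\sum_{j=1}^{Q_0}\llbracket\vec c\,\sin\psi_j(\theta)\rrbracket$ on $[0,\pi]$ with $\psi_j(\theta)=\tfrac{k\theta+2\pi k(j-1)}{2Q_0-1}$, and similarly for $g_0^-$ on $[\pi,2\pi]$. By Proposition~\ref{prop:decomp}(i), irreducibility is equivalent to $\card(g_0^+(\theta))=Q_0$ on $[0,\pi]$ and $\card(g_0^-(\theta))=Q_0-1$ on $[\pi,2\pi]$. If $Q_0\ge2$ then necessarily $\gcd(k,2Q_0-1)=1$, for otherwise two of the $\psi_j$ differ by a $\theta$-independent element of $2\pi\ZZ$ and the corresponding sheets coincide identically. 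Granting this coprimality, for $i\ne j$ one has $\sin\psi_i(\theta)=\sin\psi_j(\theta)$ precisely when $\psi_i(\theta)+\psi_j(\theta)$ is an odd multiple of $\pi$, and as $\theta$ runs over $[0,\pi]$ this sum runs over an interval $[\tfrac{2\pi k(p-1)}{2Q_0-1},\tfrac{2\pi kp}{2Q_0-1}]$ with $p=i+j-1\in\{2,\dots,2Q_0-2\}$, whose endpoints are never odd multiples of $\pi$ (again by coprimality); the intervals coming from $g_0^-$ form a subfamily of these. Hence, for $Q_0\ge 2$, irreducibility is equivalent to the statement that the open interval $\big(\tfrac{2k}{2Q_0-1},\,2k-\tfrac{2k}{2Q_0-1}\big)$, which is symmetric about the integer $k$, contains no odd integer. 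A short case distinction ($k$ odd: $k$ itself lies strictly inside; $k=2$: both $1$ and $3$ lie inside as soon as $Q_0\ge 3$; $k$ even with $k\ge4$: $k-1$ lies inside) leaves only $k=2$, $Q_0=2$, giving $\alpha=\tfrac23$. When $Q_0=1$ there is no constraint (the interval is empty), and every $l:=k\in\NN\setminus\{0\}$ occurs, with $\alpha=l$.

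Finally, substituting back: for $Q_0=1$, $\beta=l/2$ and the unrolling of $\overline\zeta_0(\rho,\psi)=\rho^{l/2}\vec c\,\sin(\tfrac{l\psi}{2})$ is $f_0^+(r,\theta)=r^{l}\vec c\,\sin(l\theta)$, the homogeneous harmonic polynomial of (i); for $Q_0=2$, $k=2$, $\beta=1$, the unrolling of $\overline\zeta_0(\rho,\psi)=\rho\,\vec c\,\sin\psi$ is exactly the pair $f_0=(f_0^+,f_0^-)$ of Proposition~\ref{prop:chartang}(c), with no further pieces, as one reads off from \eqref{eq:unwindsld1}--\eqref{eq:unwindsld2}. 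I expect the two genuinely delicate steps to be (a) setting up the unrolling correspondence so that the interface condition becomes the vanishing of $\overline\zeta_0$ on the slit --- a \emph{Dirichlet}, not Neumann, condition there --- which is what lets minimality be imported as harmonicity of $\overline\zeta_0$; and (b) verifying that the chain of equivalences with irreducibility is exact. The elimination of large $k$ and $Q_0$ is then just the elementary remark about the symmetric interval above.
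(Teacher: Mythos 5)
Your argument is correct and essentially the same as the paper's: unroll $g_0$ to a single‐valued $\zeta_0$, extend homogeneously, use the unrolling correspondence to transport competitors for $f_0$ to Dirichlet competitors for the extension so that it must be harmonic, classify by separation of variables, and then use the $\card$‐characterization of irreducibility to eliminate all $(k,Q_0)$ except $Q_0=1$ (any $k=l$) and $(k,Q_0)=(2,2)$. The only cosmetic difference is that you remain on the slit disk $\DD\setminus[0,1]$ and separate variables there, whereas the paper applies the conformal map $z\mapsto z^2$ to land in $\DD^+$ with Dirichlet data $0$ on $\RR$ and then invokes Schwarz reflection and spherical harmonics; these are the same computation in disguise, and your final case analysis on the symmetric interval $\bigl(\tfrac{2k}{2Q_0-1},\,2k-\tfrac{2k}{2Q_0-1}\bigr)$ matches the paper's inequality $\tfrac{2}{2Q_0-1}>\tfrac{l-1}{l}$.
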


\begin{remark}\label{r:minimality_of_examples} Clearly, since they are classical harmonic functions, all the examples in case (i) are actual Dirichlet minimizers. It is much less obvious that the examples in (ii) are also minimizers. This is not really needed in the proof of our main result. However, an elementary argument, which we include for completeness at the end of the paper in Proposition \ref{p:exception}, shows that in fact they are.  
\end{remark}

\begin{proof}[Proof of Lemma \ref{l:char_irr_hom}] 
By Proposition \ref{prop:decomp} there exists a function $\zeta:\So \to \RR^n$ satisfying $\zeta(0) = 0$ such that $g_0$ unwinds to $\zeta$. Let 
	\[ \zeta(\rho, \theta) = \rho^{\alpha \frac{2Q_0-1}{2}} \zeta(\theta) \]
	be an extension of $\zeta$ to the disk $\DD$. By \eqref{eq:unwindsld1} and \eqref{eq:unwindsld2} $f_0$ unwinds to $\zeta(\rho, \theta)$, and thus 
\[
\Dir(f_0, \DD) = \iint_{\DD} |D\zeta|^2 
\] 
by Lemma \ref{lm:unrolling}.
	 We consider the function $\overline \zeta(z) := \zeta(z^2): \DD^+ \to \RR^n $. By definition $\overline \zeta|_{\mathbb R} \equiv 0$. Since conformal maps do not change Dirichlet energy, we have
	 \begin{equation}\label{eq:tmpchartg}
	 	\iint_{\DD^+} |D\overline \zeta|^2 = \iint_{\DD} |D\zeta|^2 = \Dir(f_0, \DD).
	 \end{equation}
	 Consider any function $\eta: \DD^+ \to \RR^n$ satisfying $\eta|_{\partial \DD^+} = \overline \zeta|_{\partial \DD^+}$, 
we can wind the function $\eta(\sqrt{z}): \DD \to \RR^n$ by the formula \eqref{eq:unwindsld1}, \eqref{eq:unwindsld2} (where we use a branch of the square root function, for instance setting $z= r e^{i\theta}$ with $\theta \in [0, 2\pi[$ and defining $\sqrt{r e^{i\theta}} = r^{\frac{1}{2}} e^{i\theta/2}$) and find a corresponding function $h: \DD \to \AQt$ such that $h|_{\partial \DD} = f|_{\partial \DD}$ and $h^+|_{\mathbb R} = h^-|_{\mathbb R} + \llbracket 0 \rrbracket$. By the minimality of $f$ with interface $(\mathbb R, 0)$, we have
	 \[ \Dir(f_0,\DD) \leq \Dir(h, \DD) = \iint_{\DD^+} |D\eta|^2. \]
	 This combined with \eqref{eq:tmpchartg} shows that $\overline \zeta$ is a Dir-minimizer in $\DD^+$ which equals $0$ on $\mathbb R$. Thus $\overline \zeta$ is a harmonic function in $\DD^+$ which by Schwarz reflection can be extend it to $\DD$. On the other hand $\overline \zeta$ is $\alpha(2Q_0-1)$-homogeneous. By spherical harmonics we know $\alpha(2Q_0-1) = l\in \NN$ and $\overline \zeta(r, \theta) = \vec{c} r^l \sin(l\theta)$ with some constant $\vec{c}\in \RR^n$. Therefore $\zeta(\theta) = \vec{c} \sin\left( \frac{l\theta}{2} \right)$ on $\So$. 
	 
We now claim that, since the $\left(Q_0 -\frac12 \right)$-valued map $g_0$ unwinds to $\zeta(\theta) = \vec{c} \sin\left( \frac{l\theta}{2} \right)$ on $\So$ and it is irreducible, then either $Q_0=1$ or $l=Q_0=2$. In the first case $g_0^+ = \vec{c} \sin(l\theta)$ for some integer $l\in \NN\setminus \{0\}$ and we fall in the first case of the classification. In the second case 
	 \begin{equation}\label{eq:g0u}
	 	g_0^+ = \left\llbracket \vec{c} \sin\left( \frac23 \theta \right) \right\rrbracket + \left\llbracket \vec{c}\sin\left( \frac23 (\theta + 2\pi) \right) \right \rrbracket, \quad \theta \in [0,\pi], 
	 \end{equation} 
	 \begin{equation}\label{eq:g0d}
	 	g_0^- = \left\llbracket \vec{c}\sin\left( \frac23 \theta \right) \right\rrbracket, \quad \theta \in [\pi, 2\pi]\, ,
	 \end{equation} 
which covers the second case of the classification.

We next show our claim.	 
	 When $Q_0 = 1$, the condition (i) in Proposition \ref{prop:decomp} holds trivially, thus $g_0$ is irreducible. Now assume $Q_0>1$. The condition (i) fails if we can find $\theta\in [0,2\pi]$ and $k\in \NN$ such that
	 \begin{equation}\label{eq:charirred}
	 	\zeta(\theta) = \zeta\left( \theta + \frac{4\pi}{2Q_0-1} k \right),\quad 0\leq \theta, \theta + \frac{4\pi}{2Q_0-1} k < 2\pi. 
	 \end{equation} 
	 We denote $\beta = l\theta/2$, then \eqref{eq:charirred} becomes 
	 \begin{equation}\label{eq:contrairred}
	 	\sin\left( \beta \right) = \sin \left( \beta + \frac{2\pi lk}{2Q_0-1} \right), \quad 0 \leq \beta, \beta + \frac{2\pi l k}{2Q_0-1} < l \pi. 
	 \end{equation} 
	 To rephrase it slightly different, \eqref{eq:contrairred} is equivalent to find $\beta_1, \beta_2 \in [0, l\pi)$ such that $ \sin(\beta_1) = \sin(\beta_2)$ and $\beta_1 - \beta_2 = \frac{2 k}{2Q_0-1} l\pi$ for some $k\in \NN\setminus \{0\}$. For all odd integers $l \in \NN$, we can always find $\beta_1, \beta_2 \in [0,l\pi)$ with arbitrary distance in the range $[0,l\pi)$ satisfying $\sin(\beta_1) = \sin(\beta_2)$; for all even integers $l \in \NN$, we can always find $\beta_1, \beta_2 \in [0,l\pi)$ having the same sinus and with arbitrary distance in the range $[0, (l-1)\pi]$. In the latter case, the only way (i) could be satisfied is that there is an even integer $l\geq 2$ so that
	 \[ \frac{2}{2Q_0-1} l\pi > (l-1)\pi\, . \]
Namely we are looking for even numbers $l\geq 2$ and natural numbers $Q_0 >1$ such that $\frac{2}{2Q_0-1} > \frac{l-1}{l}$. Clearly, $\frac{l-1}{l} \geq \frac{1}{2}$. When $Q_0\geq 3$ we have $\frac{2}{2Q_0-1} \leq \frac{2}{5} < \frac{1}{2}$. Hence $Q_0=2$ is the only possibility: in that case $\frac{2}{2Q_0 -1} = \frac{2}{3}$. Then $l=2$ satisfies the inequality $\frac{l-1}{l} < \frac{2}{2Q_0-1}$, but as soon as $l\geq 4$ we have $\frac{l-1}{l} \geq \frac{3}{4} > \frac{2}{3}$. This restricts the possibilities to the only case $Q_0=l=2$ and thus proves our claim. 
\end{proof}

\begin{proof}[Proof of Proposition \ref{prop:chartang}] Denote by $\alpha$ the homogeneity of the map $f$ and let $g = f|_\So$. 

	We decompose $g\in W^{1,2}(\So, \AQt)$ into irreducible pieces as described in Proposition \ref{prop:decomp}:
	\[ g = \tilde g_0 + \tilde g = \tilde g_0 + \tilde{g} \] 
	where $\tilde g_0 = (\tilde g^+_0, \tilde g^-_0) \in W^{1,2}(\So, \mathcal{A}_{Q_0}^{\pm})$ is an irreducible map with interface $(\mathbb R, 0)$. We then get a similar decomposition of $f$ as $\tilde f_0 + \tilde f$, where both are $\alpha$-homogeneous and Dir-minimizing. 

Recall that according to Lemma \ref{l:char_irr_hom}, either $Q_0 =1$ and so $f_0^+$ is a classical harmonic polynomial with trace $0$ on $\mathbb R$, or $Q_0 =2$, in which case $f_0$ is the map of case (c).
According to \cite[Proposition 5.1]{DS} we can further decompose
\[
\tilde{f} = \sum_j k_j \tilde{f_j}
\]
where the traces $\tilde{g}_j$ of each $\tilde{f}_j$ on $\So$ are irreducible and have disjoint supports, namely 
\begin{equation}\label{e:non-intersection-1}
{\rm spt} (\tilde{g}_j (\theta)) \cap {\rm spt} (\tilde{g}_i (\theta)) = \emptyset\qquad
\mbox{for every $\theta$.}
\end{equation}
We next wish to show that
\begin{equation}\label{e:non-intersection-2}
\mbox{if ${\rm spt} (\tilde{g}^\pm_0 (\theta)) \cap {\rm spt} (\tilde{g}_j (\theta)) \neq \emptyset$ then $Q_0=1$ and $\tilde{g}_j (\theta) = \tilde{g}^+_0 (\theta)\;\; \forall \theta \in [0, \pi]$,} 
\end{equation}
namely $\tilde{f}_j$ is a classical harmonic function which coincides with $\tilde{f}_0^+$ on the upper half plane and can thus be obtained by Schwarz reflection on the lower half plane.
We argue for \eqref{e:non-intersection-2} and we distinguish two cases:

\medskip

{\bf Case $Q_0=2$.} $\tilde{f}_j$ must have homogeneity $\frac{2}{3}$. By the classification result of \cite[Proposition 5.1]{DS} it must take the form
\[
\sum_{z^3 = x \atop z=(r,\theta)} \left\llbracket r^2 \left( \vec{a_j} \cos(2\theta) + \vec{b_j} \sin(2\theta) \right) \right\rrbracket\, .
\]
where $\vec{a_j}$ and $\vec{b}_j$ span a $2$-dimensional plane, or it must the trivial map $\tilde{f}_j = \a{0}$. 

Assume that for some $\bar \theta$ we have ${\rm spt} (\tilde{g}^+_0 (\bar \theta))\cap {\rm \spt} (\tilde{g}_j (\bar \theta))\neq \emptyset$. If $\bar \theta \in ]0, \pi[$, then either we have a line of ``interior singularities'', contradicting the regularity theory of \cite{DS}, or ${\rm spt} (\tilde{g}^+_0 (\theta))\subset {\rm spt} (\tilde{g}_j (\theta))$ $\forall \theta \in ]0, \pi[$. The latter condition would however contradict the linear independence of $\vec{a}_j$ and $\vec{b}_j$. A similar argument can be used to exclude that there is any intersection between ${\rm spt}\, (\tilde{g}_0^- (\bar\theta))$ and ${\rm spt} (\tilde{g}_j (\bar\theta))$ when $\bar\theta\in ]\pi, 2\pi[$.

If $\bar\theta = 0$, then 
${\rm spt}\, (\tilde{g}_j (0))$ contains $0$ or $\vec{c} \sin \frac{4\pi}{3}\neq 0$. Like above, the second possibility (and the continuity at the interface) would imply ${\rm spt} (\tilde{g}^+_0 (\theta))\subset {\rm spt} (\tilde{g}_j (\theta))$ $\forall \theta \in [0, \pi]$ by the interior regularity theory of \cite{DS}. 
If instead $0 \in \spt \tilde{g}_j (0)$, $\tilde{g}_j$ would have to be trivial, because in the other alternative $\vec{a}_j$ and $\vec{b}_j$ must be linearly independent.  Hence $\tilde{f}_j$ vanishes identically. This being the case, consider the $5/2$-valued map $\a{\tilde{f}_0} + \a{\tilde{f}_j}$, which must be Dir-minimizing, and decompose it differently by introducing the maps
\[
h (r, \theta) = \left\{\begin{array}{ll}
\tilde{f}^+_0 (r, \theta) \qquad &\mbox{ if $\theta \in [0, \pi]$}\\
\tilde{f}_0^- (\theta) + \tilde{f}_j (r, \theta) \qquad &\mbox{ if $\theta \in [\pi, 2\pi]$.}
\end{array}\right.
\]
and 
\[
h^+ (r, \theta) = \tilde{f}_j (r, \theta) \qquad \mbox{if $\theta \in [0, \pi]$.}
\]
Clearly $h^+ + h$ is the same $\frac{5}{2}$-valued Dir minimizer and $h$ must therefore be a $2$-valued Dir-minimizing map. It is however $\frac{2}{3}$ homogeneous, but not of the form given in \cite[Proposition 5.1]{DS}, which is a contradiction.

Since we can argue similarly for $\bar \theta = \pi$, this shows that when $Q_0=2$ the supports of $\tilde{g}_0^\pm (\theta)$ and $\tilde{g}_j (\theta)$ must be disjoint for all $\theta$.

\medskip

{\bf Case $Q_0=1$.} Here we only have to examine $\bar \theta \in [0, \pi]$.
If ${\rm spt} (\tilde{g}_j (\bar \theta))$ intersects ${\rm spt} (\tilde g^+_0 (\bar \theta))$ for $\bar \theta \in ]0, \pi[$, then arguing as above, the interior regularity theory would imply ${\rm spt}\, (\tilde{g}^+_0 (\theta)) \subset {\rm spt}\, (\tilde{g}_j (\theta))$ for all $\theta \in [0, \pi]$. But then the homogeneity of $\tilde{f}_j$ must be an integer and $\tilde{f}_j$ is a single-valued classical harmonic polynomial by \cite[Proposition 5.1]{DS}. Since on the upper half plane such polynomial coincides with $\tilde{f}_0^+$, on the lower half plane it is determined by Schwarz reflection. 

If $\bar \theta \in \{0, \pi\}$, then either $0\in \spt (\tilde g_j(0))$ or $0\in \spt (\tilde g_j(\pi))$. If the homogeneity of $\tilde{f}_j$ is integral, then it is a classical harmonic function, which thus vanishes identically on $\mathbb R$. We then argue as above and consider the $3/2$-valued map $\a{\tilde{f}_0} + \a{\tilde{f}_j}$, which must be Dir-minimizing. As above, wecompose it differently by introducing the maps
\[
h (r, \theta) = \left\{\begin{array}{ll}
\tilde{f}^+_0 (r, \theta) \qquad &\mbox{ if $\theta \in [0, \pi]$}\\
\tilde{f}_j (r, \theta) \qquad &\mbox{ if $\theta \in [\pi, 2\pi]$.}
\end{array}\right.
\]
and 
\[
h^+ (r, \theta) = \tilde{f}_j (r, \theta) \qquad \mbox{if $\theta \in [0, \pi]$.}
\]
Clearly $h^+ + h$ is the same $\frac{3}{2}$-valued Dir minimizer and $h$ must therefore be harmonic. By the Schwarz reflection principle, we conclude that in fact $\tilde{f}_0^+$ is the restriction of $\tilde{f}_j$ on the upper half plane, namely it vanishes identically.

We thus must finally examine the possibility that $0\in \spt (\tilde g_j(0))$ or $0\in \spt (\tilde g_j(\pi))$ and the homogeneity is not a natural number, namely
$\alpha = \frac{n^*}{Q^*}$ where $n^*$ and $Q^*$ are coprime. The classification of \cite[Proposition 5.1]{DS} implies that either
\[
\tilde{f}_j (x) = \sum_{z^{Q^*} = x \atop z=(r,\theta)} \left\llbracket r^{n^*} \left( \vec{a_j} \cos(n^* \theta) + \vec{b_j} \sin(n^* \theta) \right) \right\rrbracket
\]
with $\vec{a}_j$ and $\vec{b_j}$ linearly independent, or $\tilde{f}_j \equiv \a{0}$. In the first case we must have $0\not\in {\rm spt}\, (\tilde{g}_j (\theta))$ for every $\theta$, so we are necessarily in the second case, where $\tilde{f}_j$ is a single-valued harmonic function. Arguing as above we then conclude that it must coincide with $\tilde{f}_0^+$ on the upper half plane.

\medskip

{\bf Conclusion}.
\eqref{e:non-intersection-1} and \eqref{e:non-intersection-2} lead immediately to the following conclusions:
\begin{itemize}
\item When $Q_0=2$, $f$ takes necessarily the form in (c), where we set $f_0 = \tilde{f}_0$ and $f_j = \tilde{f}_j$;
\item When $Q_0=1$, either the supports of $\tilde{g}_j (\theta)$ are disjoint from those of $\tilde{g}^+_0 (\theta)$ for all $\theta$ and $j$, in which case we set $f_0 = \tilde{f}_0$ and $f_j = \tilde{f}_j$; or there is one $\tilde{f}_j$ which coincides with $\tilde{f}^+_0$ on the upper half plane, while all the others have disjoint supports. This results into a decomposition of the form (a) or (b). If the homogeneity $\alpha$ is an integer, then the decomposition takes the form (a). If the homogeneity $\alpha$ is not integer, then the decomposition takes the form (b) because the only classical harmonic function which is $\alpha$-homogeneous is the trivial one.
\end{itemize}

We come to the final statement of the proposition. If the Dirichlet energy of $f$ is positive, clearly in case (b) the map $\sum_j k_j f_j$ must have nontrivial energy. In case (a), observe that the triviality of $\sum_j k_j f_j$, the Schwarz reflection principle, and assumption \eqref{eq:avgsym} would imply that $f^+ = Q \a{0}$ and $f^- = (Q-1) \a{0}$, which again is not compatible with the positivity of the Dirichlet energy. 
\end{proof}

\section{Proof of Theorem \ref{thm:main}: Discreteness of the singular set}\label{s:final}
The proof of the main theorem is by induction on the number of values $Q$. The basic step $Q=1$ is clearly trivial, because $f^-$ does not exist in that case and $f^+$ is a classical harmonic function. . Now we assume $Q>1$ and, as induction hypotheses, that the theorem holds for every $Q'<Q$.

We argue by contradiction and assume the existence of a Dir-minimizing $\left(Q- \frac12 \right)$-valued planar function with real analytic interface $(\gamma, \varphi)$ whose singular set is not discrete. As shown in Section \ref{s:preliminaries} we can assume, without loss of generality that $f$ is as in Theorem \ref{thm:main_simple}, namely $Q\eta^+=(Q-1) \eta^-$ and the interface is $(\RR, 0)$. Under our assumptions the singular set must have an accumulation point $x_0$. The latter cannot be in the interior, and thus belongs to the interface. Without loss of generality we can assume that $x_0=0$. 

Next, we must have $f^+ (0) = Q \a{0}$. Otherwise we have $f^+ (0)  = Q_1 \a{0} + T$ with $T\in \mathcal{A}_{Q_2} (\R^n)$, where $Q_1+Q_2 = Q$, $1\leq Q_1 \leq Q-1$ and $\supp (T)$ does not contain the origin. By the H\"older continuity theorem, in a neighborhood $U$ of the origin there would be a $Q_2$-valued map $h\in W^{1,2} (U)$ and a $(Q_1-\frac{1}{2})$-valued map $g = (g^+, g^-) \in W^{1,2} (U)$, with disjoint supports and such that $f^\pm = g^\pm + h$. Then the singular set of $f$ in $U$ would be the union of the singular set of $h$ and of the singular set of $f$. Moreover, both must be Dir-minimizing. Hence the singular set of $h$ is discrete by the interior regularity theory, whereas the singular set of $g$ is discrete by the inductive assumption. This is however not possible because we know that $0$ is an accumulation point of the singular set of $f$. 

Note next that it must be $D(r)>0$ for every $r$ in a positive interval, otherwise we would have $f^+ \equiv Q \a{0}$ and $f^- \equiv (Q-1) \a{0}$ in some neighborhood of $0$. Thus $I_{f}(r)$ is well-defined for every $r>0$ sufficiently small. Let $g$ be the (homogeneous) tangent function to $f$ at $0$, given by Theorem \ref{thm:cvtang}. By the characterization in Proposition \ref{prop:chartang} $g$ has the following decomposition:
\[ g^+ = g_0^+ + \sum_{j=1}^J k_j g_j, \quad g^- = g_0^- + \sum_{j=1}^J k_j g_j \]
where:
\begin{itemize}
\item In the alternatives (a) or (b) of Proposition \ref{prop:chartang} $(g_0^+, g_0^-)$ equals $(k_0 \a{h}, (k_0-1) \a{h})$, where $h$ is a classical harmonic function which vanishes at $\RR$ and $g_0^-$ its reflection.
\item In the alternative (c) $(g_0^+, g_0^-) \in W^{1,2}(\RR^2, \mathcal{A}_2^{\pm})$. 
\end{itemize}
In the alternative (b) $g_0^+$ is $2$-valued, namely $g_0^+ = \a{(g_0^+)_1} + \a{(g_0^+)_2}$ and we define
	\[ 
	d_0:= \min_{x\in \mathbb{S}^1_+} {\rm sep} (g_0^+ (x)) = 
	\min_{x\in \mathbb{S}_1^+} \left|(g_0^+)_1 (x) - (g_0^+)_2 (x)\right|\, .
	\]
	Note that $d_0$ is positive. 
In the alternative 	(a) we set $d_0 = +\infty$.

For each $j\in \{1, \cdots, J\}$ we define
\[ 	d_{0,j}:=  \min \left\{ \min_{x\in \mathbb{S}^1_+ } \dist\left(\supp(g_0^+(x)), \supp(g_j(x))\right), \, \min_{x\in \mathbb{S}^1_-} \dist\left(\supp(g_0^-(x)), \supp(g_j(x))\right) \right\},
\]
and define for each pair $i\neq j \in \{1,\cdots, J\}$
\[ 	
	d_{i,j}:= \min_{x\in \So}  \dist\left(\supp(g_i(x)), \supp(g_j(x)\right).
\]
By Proposition \ref{prop:chartang} we know $d_0, d_{0,j}, d_{i,j} > 0$ for all $i, j$, because the Dirichlet energy of the tangent function is positive and it satisfies the averaging condition \eqref{eq:avgsym}.  
Let 
\[ \epsilon = \frac14 \min \left\{d_0, \min_{j} d_{0,j}, \, \min_{i\neq j} d_{i,j} \right\}>0. \]

We claim that there exists $r_0>0$ such that
\begin{equation}\label{e:separation_claim}
\GG(f(x), g(x)) \leq \epsilon|x|^{\alpha} \quad \text{ for every } |x| \leq r_0, 
\end{equation}
where $\alpha = I_{0,f}(0)>0$.
In fact, recall the uniform convergence of the blow-ups $f_r$ to $g$:
\[ \GG(f_r(\theta), g(\theta)) \to 0 \text{ uniformly in } \theta \in \So \text{ as } r\to 0. \]
Recall \eqref{eq:blowupscal}, the blow-ups satisfy
\[ \frac{f(x)}{|x|^\alpha} = f_{|x|} \left( \frac{x}{|x|} \right) \left( 1 + O\left( |x|^{\frac{\beta}{2}} \right) \right). \]
Hence
\[ \GG\left( \frac{f(r,\theta)}{r^\alpha}, g(\theta) \right) \to 0  \text{ uniformly in } \theta \in \So \text{ as } r\to 0. \]
Recall that $g$ is an $\alpha$-homogeneous map, i.e. $g(x) = |x|^\alpha g (\frac{x}{|x|})$. We have thus showed \eqref{e:separation_claim}.

The choice of $\epsilon$ implies the existence of functions $h_j$ with $j\in \{0, 1, \cdots, J\}$, such that:
\begin{itemize}
\item $h_0 = (h^+_0, h_0^-) \in W^{1,2}(B_{r_0}, \mathcal{A}_{Q_0}^{\pm})$ with interface $(\gamma, \varphi)$ and $Q_0 = 1$ or $2$, depending on whether alternative (a) or (b) in Proposition \ref{prop:chartang} holds, and in particular $\card \supp(h_0^+(x)) = Q_0$ for all $x \in B_{r_0}^+ \setminus \{ 0\}$;
\item each $h_j$ is in $ W^{1,2}(B_{r_0}, \mathcal{A}_{k_j Q_j} ) $, and
\begin{equation}\label{eq:decompf}
	f|_{B_{r_0}} = (h_0^+, h_0^-) + \sum_{j=1}^J h_j;
\end{equation} 
\item For every $x\in B_{r_0}\setminus \{ 0\}$ and every $i>j>0$ we have $\supp (h_j (x)) \cap \supp (h_i (x))=\emptyset$;
\item For every $x\in B_{r_0}^+\setminus \{0\}$ and every $i>0$ we have $\supp (h_i (x))\cap \supp (h_0^+ (x)) =\emptyset$;
\item For every $x\in B_{r_0}^-\setminus \{0\}$ and every $i>0$ we have $\supp (h_i (x))\cap \supp (h_0^- (x))=\emptyset$.
\end{itemize}
In particular:
\begin{itemize}
\item $h_0$ is a Dir-minimizer with interface $(\RR, 0)$, and each $h_j$ is a Dir-minimizer;
\item The singular set of $f$ in $B_{r_0}$ is given by $0$ and the union of the singular sets of $h_0^+, h_0^-$ and the $h_j$'s.
\end{itemize} 
Suppose $J=0$. Recall Proposition \ref{prop:chartang}, this may only occur in the alternative (c), i.e. when $f|_{B_{r_0}} = (h_0^+, h_0^-)$ is a $\frac{3}{2}$-valued map. By the separation of sheets of $h_0^+$, the singular set of $f$ in $B_{r_0}$ is just the origin and we get a contradiction.
Suppose $J\geq 1$, in other words the sum \eqref{eq:decompf} contains at least two terms, so $h_0^+$ takes strictly less than $Q$ values and we can use our inductive hypothesis to conclude that the singular set of $h_0$ is discrete. On the other hand, the singular set of each $h_j$ with $j>0$ is discrete by \cite[Theorem 0.12]{DS}. We conclude that the singular set of $f$ in $B_{r_0}$ is discrete as well, contradicting the assumption that the origin was an accumulation point for it.

\section{The exceptional $\frac{2}{3}$-homogeneous minimizer}

In this section we complete the analysis of the singularities by showing the following

\begin{prop}\label{p:exception}
Let $\vec{c} \neq 0$ and let $f_0$ be the $\frac{3}{2}$-valued map of case (ii) in Lemma \ref{l:char_irr_hom}. Then $f_0$ is locally Dir-minimizing.
\end{prop}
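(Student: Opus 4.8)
The plan is to deduce the minimality of $f_0$ from the Dirichlet principle for a single harmonic polynomial on the half‑disk $\DD^+$, by combining the unrolling of Lemma \ref{lm:unrolling} with a conformal change of variables. As a first reduction, composing any competitor with the orthogonal projection of $\RR^n$ onto the line $\RR\vec c$ is $1$‑Lipschitz, preserves the interface condition and does not increase the Dirichlet energy, so we may assume $n=1$ and $\vec c=c>0$; thus $f_0$ is the scalar $\tfrac32$‑map $f_0^+(r,\theta)=\a{c\,r^{2/3}\sin(\tfrac23\theta)}+\a{c\,r^{2/3}\sin(\tfrac23(\theta+2\pi))}$, $f_0^-(r,\theta)=\a{c\,r^{2/3}\sin(\tfrac23\theta)}$. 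Since $f_0$ is $\tfrac23$‑homogeneous, a standard scaling argument reduces the statement to the inequality $\Dir(f_0,\DD)\le\Dir(h,\DD)$ for every $h=(h^+,h^-)\in W^{1,2}(\DD,\mathcal A_2^\pm)$ with interface $(\RR,0)$, with $h|_{\partial\DD}=f_0|_{\partial\DD}$ and $\Dir(h,\DD)<\infty$; moreover, by \cite[Theorem 4.2]{DDHM} one may further assume that $h$ is itself Dir‑minimizing in $\DD$, and hence, by Theorem \ref{thm:main}, regular off a discrete set.

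Recall the harmonic model underlying $f_0$. As in the proof of Lemma \ref{l:char_irr_hom}, $f_0$ is the unrolling (Lemma \ref{lm:unrolling}) of the single‑valued map $\zeta_0\in W^{1,2}(\DD\setminus[0,1])$, $\zeta_0(\rho,\phi)=c\,\rho\sin\phi$, so $\Dir(f_0,\DD)=\iint_\DD|D\zeta_0|^2=\pi c^2$, and $\zeta_0$ vanishes on both edges of the slit $[0,1]$. The conformal map $z\mapsto z^2$ sends $\DD^+$ bijectively onto $\DD\setminus[0,1]$, carrying the upper semicircle onto $\partial\DD$ and the two radii $[0,1]$, $[-1,0]$ onto the two edges of the slit; it transforms $\zeta_0$ into the harmonic polynomial $\overline\zeta(z):=c\,\Imag(z^2)$, which vanishes on $[-1,1]=\RR\cap\overline\DD$, and by conformal invariance $\iint_{\DD^+}|D\overline\zeta|^2=\Dir(f_0,\DD)$.

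The core of the proof is a \emph{rolling up} step: from a competitor $h$ one constructs $\xi\in W^{1,2}(\DD\setminus[0,1])$ with $\xi|_{\partial\DD}=\zeta_0|_{\partial\DD}$, with $\xi$ vanishing on both edges of $[0,1]$, and with $\iint_{\DD\setminus[0,1]}|D\xi|^2\le\Dir(h,\DD)$. Granting this, $\eta(z):=\xi(z^2)$ belongs to $W^{1,2}(\DD^+)$, agrees with $\overline\zeta$ on all of $\partial\DD^+$ (on the semicircle because $\xi|_{\partial\DD}=\zeta_0|_{\partial\DD}$, on $[-1,1]$ because $\xi$ vanishes on the slit), and $\iint_{\DD^+}|D\eta|^2=\iint_{\DD\setminus[0,1]}|D\xi|^2$; the Dirichlet principle for the harmonic $\overline\zeta$ then gives
\[
\Dir(h,\DD)\ \ge\ \iint_{\DD\setminus[0,1]}|D\xi|^2\ =\ \iint_{\DD^+}|D\eta|^2\ \ge\ \iint_{\DD^+}|D\overline\zeta|^2\ =\ \Dir(f_0,\DD),
\]
which is exactly the desired inequality. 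To build $\xi$ one uses the three conformal sectors $\mathcal D_1^+,\mathcal D_1^-,\mathcal D_2^+$ of Lemma \ref{lm:unrolling} (with $Q=2$), which tile $\DD\setminus[0,1]$: on them $\xi$ is defined by transporting, through the conformal maps $\varphi_1|_{\mathcal C^+},\varphi_1|_{\mathcal C^-},\varphi_2$ respectively, a selection $s_1$ of $h^+$, the single‑valued function $h^-$, and a second selection $s_2$ of $h^+$. Chasing the boundary correspondences shows that $\xi$ is $W^{1,2}$ with the prescribed slit values exactly when $s_1,s_2$ can be chosen so that on $\RR\cap\DD$ the ``zero sheet'' of $h^+$ (the summand $\a{0}$ in $h^+=\a{h^-}+\a{0}$) is carried by $s_1$ over $\RR^+$ and by $s_2$ over $\RR^-$; the Dirichlet energy then adds up conformally, $\iint_{\DD\setminus[0,1]}|D\xi|^2=\Dir(h^+,\DD^+)+\Dir(h^-,\DD^-)=\Dir(h,\DD)$.

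The hard part is precisely the existence of such selections. Since $h$ is real valued, $h^+$ splits into ordered selections $p\le q$ in $W^{1,2}(\DD^+)$ with $\a{p}+\a{q}=h^+$ and additive Dirichlet energies, and $p,q$ may be interchanged across any portion of the coincidence set $\{p=q\}$. One first rules out that $h^+$ has cardinality $2$ on all of $\DD^+$: in that case $h^-$ would be sign‑definite on $\RR\cap\DD$, say $h^-\ge 0$, whence $p$ vanishes on $\RR\cap\DD$, so $P:=(p\text{ on }\DD^+,\,0\text{ on }\DD^-)\in W^{1,2}(\DD)$, but $P|_{\partial\DD}=\min(f_0^+|_{\text{semicircle}})$ on the upper semicircle and $0$ on the lower one, which jumps at $z=1$ — impossible for the trace of a $W^{1,2}$ function. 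Hence $\{p=q\}\ne\emptyset$, and the remaining point is to show that $\{p=q\}$ separates $\DD^+$ enough that the interchange of sheets dictated by ``zero sheet over $\RR^+$'' versus ``over $\RR^-$'' produces \emph{globally} $W^{1,2}$ functions $s_1,s_2$. I expect this separation statement — for which the reduction to a Dir‑minimizing, hence regular, competitor $h$ (so that $\{p=q\}$ is a tame set, e.g.\ a locally finite union of analytic arcs) is the natural tool — to be the substantive content of the proof, everything else being the conformal bookkeeping already packaged in Lemma \ref{lm:unrolling}.
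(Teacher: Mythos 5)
Your strategy --- rolling a competitor $h$ up to a scalar function $\xi$ on the slit disk and then invoking the Dirichlet principle for the harmonic polynomial $\overline\zeta(z)=c\,\Imag(z^2)$ on $\DD^+$ --- is genuinely different from the paper's. The paper argues in the opposite direction: it takes the Dir-minimizer $h$ with boundary datum $f_0|_{\So}$ and interface $(\RR,0)$, shows it inherits the averaging symmetry \eqref{eq:avgsym}, establishes via a slicing/energy argument that $h^-$ must vanish at some $p=(\sigma,0)\in\,]-1,1[\,$ (because its trace tends to $\sin\frac{4\pi}{3}<0$ near $(1,0)$ and to $\sin\frac{2\pi}{3}>0$ near $(-1,0)$ along a sequence of radii), and then blows $h$ up at $p$. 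Proposition~\ref{prop:chartang} then forces the tangent function to be exactly the exceptional $\frac{2}{3}$-homogeneous map, and since tangent functions of minimizers are themselves minimizers (by the compactness of Theorem~\ref{thm:tangent}), the conclusion follows.

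The gap you flag yourself is real and, I believe, substantial rather than technical. For the rolling-up you need a decomposition $h^+=\a{s_1}+\a{s_2}$ with $s_1,s_2\in W^{1,2}(\DD^+)$ such that on $\RR\cap\DD$ the zero sheet is carried by $s_1$ over $\RR^+$ and by $s_2$ over $\RR^-$, which forces the two sheets to swap precisely at the origin. This happens for free only if $h^-$ changes sign at $0$; if $h^-$ is one-signed in a one-sided neighborhood of $0$ the ordered selections $p\le q$ do not swap there, and manufacturing globally $W^{1,2}$ swapped selections requires the coincidence set $\{p=q\}$ to separate $\DD^+$ near $\RR$ in a specific way. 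The nonemptiness of $\{p=q\}$ (which your trace-jump argument does give) and the a priori regularity of a Dir-minimizing competitor are far from supplying this separation, and it is not even clear that ``switch at the origin'' is the right requirement: if $h^-$ changes sign only at some $\sigma\ne 0$, the natural slit should emanate from $\sigma$, and $z\mapsto z^2$ no longer rectifies it. So the rolling-up step, as stated, is not conformal bookkeeping but a structural claim about two-valued minimizers that your proof does not establish. The paper's blow-up argument sidesteps the selection problem entirely because the location $\sigma$ of the zero of $h^-$ is irrelevant once one passes to a tangent map, which is homogeneous by construction.
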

\begin{proof}
First of all consider that $f_0$ takes values in the line spanned by $\vec{c}$. Thus it suffices to show the claim under the assumption that $n=1$. 
Consider now the scalar functions
\[
g^+ (\theta) = \a{\sin \frac{2}{3}\theta} + \a{\sin \frac{2}{3} (\theta+2\pi)}\, ,\qquad \theta \in [0, \pi]
\]
\[
g^- (\theta) = \a{\sin \frac{2}{3} \theta}\, , \qquad \theta \in [\pi, 2\pi]\, ,
\]
and the corresponding $\frac{3}{2}$-valued map $(g^+, g^-)$ on $\So$. 
Denote by $h = (h^+, h^-)$ any minimizer of the corresponding Dirichlet problem for $\frac{3}{2}$-valued maps with interface $(\mathbb R, 0)$, which can be shown to exist by the direct methods of the calculus of variations following the theory in \cite{DDHM}. Note that in order to apply the direct methods we need to show the existence of at least one $\frac{3}{2}$-valued function with finite Dirichlet energy which takes the Dirichlet boundary data and has interface $(\mathbb R, 0)$. However such function is provided precisely by the $f_0$ of case (ii) in Lemma \ref{l:char_irr_hom}. 

A simple computation shows that $2 \bdeta (g^+ (\theta)) = g^- (2\pi-\theta)$ for every $\theta$.
In particular, it must be that $2\bdeta (h^+ (x)) = h^- (\bar x)$, otherwise we could  argue as in Section \ref{s:mod_out_average} and lower the energy of $h$ by keeping the same boundary value and the same interface. Hence $h$ satisfies the condition \eqref{eq:avgsym}. 
Now, $h^-$ is a classical harmonic function and by Theorem \ref{thm:Holder} it has continuous trace on the open segment $](-1,0), (1,0)[$. We will show below that there must be one point $\sigma \in ]-1,1[$ such that $h^- (\sigma ,0)=0$. Fix now such $p = (\sigma, 0)$ and observe that the Dirichlet energy of $h$ cannot vanish in any disk $B_r (p)$: if it vanishes on some disk, by the unique continuation of classical harmonic function $h^-$ would have to vanish identically, which is not possible because its trace on $(\So)^-$ is not identically $0$. Consider now the unique tangent function $f$ to $h$ at $p$. The latter must be an $\alpha$-homogeneous Dir-minimizer, satisfying the averaging condition \eqref{eq:avgsym}.
We claim that such tangent function must be necessarily of the form $f = (f_0^+, f_0^-)$ as in case (c) of Proposition \ref{prop:chartang}, which thus would prove the minimality of $f$ (by the compactness of Dir-minimizers). Indeed, if this were not the case, then $f$ would have to fall necessarily in case (a) of Proposition \ref{prop:chartang}, because in order to fall in case (b) the map would have to be $(Q-\frac{1}{2})$-valued with $Q\geq 3$. In case (a) of Proposition \ref{prop:chartang} we would have
\begin{align}
f^+ (x) & = \a{k (x)} + \a{l (x)},\\
f^- (x) & = \a{l (x)},
\end{align}
where both $k$ and $l$ are homogeneous harmonic polynomials with the same degree of homogeneity $d\geq 1$. Moreover $k (s,0) =0$ for every $s$. 
In particular 
\begin{align*}
k (r, \theta) &= a r^d \sin d \theta\\
l (r, \theta) &= r^d (\alpha \sin d\theta + \beta \cos d \theta)
\end{align*}
for some constants $a, \alpha, \beta \in \mathbb R$. Notice however that $k$ and $l$ cannot coincide, because otherwise the averaging condition \eqref{eq:avgsym} would imply that they both vanish identically, whereas the Dirichlet energy of $f$ must be positive. Since $k$ and $l$ do not coincide, Proposition \ref{prop:chartang} implies that 
\[
a \sin d\theta \neq \alpha \sin d\theta + \beta \cos d\theta \qquad \forall \theta \in [0, \pi]\, ,
\]
namely
\[
(\alpha -a) \sin d\theta + \beta \cos d\theta \neq 0 \qquad \forall \theta \in [0, \pi]\, .
\]
The latter condition is however impossible. 

It remains to show the existence of $\sigma\in ]-1,1[$ such that $h^- (\sigma, 0) =0$. As already recalled, $h^-$ has a continuous trace on $]-1,1[$. If we knew the continuity of $h^-$ also at the ``corner points'' $(-1,0)$ and $(1,0)$, then we would have
\begin{align}
h^- (1, 0) & = \sin \frac{4\pi}{3} < 0 \\
h^- (-1, 0) & = \sin \frac{2\pi}{3} > 0
\end{align}
and the existence of the point $\sigma$ would be guaranteed by the intermediate value theorem for continuous functions. While it is possible to show a general continuity result at the intersection of the ``boundary'' $\So$ with the interface $\mathbb R$ under rather general assumptions on the boundary data and for a general $(Q-\frac{1}{2})$-Dir minimizer, this would require quite some effort and goes beyond the scopes of the present paper. We circumvent this technical difficulty with a short ad hoc argument. 

In order to prove that $\sigma$ exists it suffices indeed to argue that $h^-$ must take both positive and negative values on $]-1,1[$. For this it suffices to show the existence of a sequence of values $s_k\uparrow 1$ such that $h^- (s_k, 0) \to \sin \frac{4\pi}{3}$ and of a sequence $t_k \downarrow -1$ such that 
$h^- (t_k, 0) \to \sin \frac{2\pi}{3}$. Without loss of generality, let us argue for the existence of the sequence $s_k$. Assume by contradiction that there are a positive $\delta$ and positive $\eta$ such that
\[
\left|h^- (1-t, 0) - \sin \frac{4\pi}{3}\right|\geq 2\delta \qquad \forall t\in ]0, \eta[\, .
\]
Let $\gamma_t$ be the arc $\partial B_t (1,0)\cap B_1 (0) \cap \{(x,y): y<0\}$. One endpoint $p_t$ of $\gamma_t$ is precisely $(1-t, 0)$, while the other endpoint $q_t$ lies on $(\So)^-$. By choosing $\eta$ sufficiently small we can use the continuity of the harmonic function $h^-$ at $q_t$ and the continuity of its trace $g^-$ at $(1,0)$ to infer
\[
\left|h^- (q_t) - \sin \frac{4\pi}{3}\right| = |g^- (q_t) - g^- (1,0)| \leq \delta\, .
\]
We have thus concluded that
\[
|h^- (p_t) - h^- (q_t)|\geq \delta\, . 
\]
By the fundamental theorem of calculus and using Cauchy-Schwarz, for every $\rho \in ]0, \eta[$ such that $h^-|_{\gamma_\rho} \in W^{1,2} (\gamma_\rho)$ we conclude
\[
\int_{\gamma_\rho} |D_{\tau} h^-|^2 \geq \frac{C}{\rho} \left(\int_{\gamma_\rho} |D_{\tau} h^-|\right)^2 \geq \frac{C\delta^2}{\rho}\, , 
\]
where $C$ is a geometric constant. The latter inequality thus holds for a.e. $\rho\in ]0, \eta[$ and integrating in $\rho$ we then conclude
\[
\iint_{B_\eta (1,0)} |Dh^-|^2 \geq \int_0^{\eta} \left( \int_{\gamma_{\rho}} |D_{\tau} h^-|^2 \right) d\rho \geq \int_0^\eta \frac{C\delta^2}{\rho}\, d\rho = \infty\, ,
\]
which contradicts the fact that $h^-$ has finite energy. 
\end{proof}

\bibliographystyle{abbrv}
\bibliography{white-conjecture-linear}

\begin{thebibliography}{10}

\bibitem{AllB}
W.~K. Allard.
\newblock {On the first variation of a varifold: boundary behavior}.
\newblock {\em Ann. of Math. (2)}, 101:418--446, 1975.

\bibitem{Alm}
J.~F.~J. Almgren.
\newblock {\em {Almgren's big regularity paper}}, volume~1 of {\em {World
  Scientific Monograph Series in Mathematics}}.
\newblock World Scientific Publishing Co. Inc., River Edge, NJ, 2000.

\bibitem{Chang}
S.~X. Chang.
\newblock {Two-dimensional area minimizing integral currents are classical
  minimal surfaces}.
\newblock {\em J. Amer. Math. Soc.}, 1(4):699--778, 1988.

\bibitem{Courant40}
R.~Courant.
\newblock {The existence of minimal surfaces of given topological structure
  under prescribed boundary conditions}.
\newblock {\em Acta Math.}, 72:51--98, 1940.

\bibitem{DG}
E.~{De Giorgi}.
\newblock {\em {Frontiere orientate di misura minima}}.
\newblock {Seminario di Matematica della Scuola Normale Superiore di Pisa,
  1960-61}. Editrice Tecnico Scientifica, Pisa, 1961.

\bibitem{De-GiorgiColombiniPiccinini72}
E.~{De Giorgi}, F.~Colombini, and L.~C. Piccinini.
\newblock {\em {Frontiere orientate di misura minima e questioni collegate}}.
\newblock Scuola Normale Superiore, Pisa, 1972.

\bibitem{DDH}
C.~{De Lellis}, G.~{De Philippis}, and J.~{Hirsch}.
\newblock {Forthcoming}.

\bibitem{DDHM}
C.~{De Lellis}, G.~{De Philippis}, J.~{Hirsch}, and A.~{Massaccesi}.
\newblock {On the boundary behavior of mass-minimizing integral currents}.
\newblock {\em arXiv e-prints}, page arXiv:1809.09457, Sep 2018.

\bibitem{DHMS1}
C.~{De Lellis}, J.~{Hirsch}, A.~{Marchese}, and S.~{Stuvard}.
\newblock {Forthcoming}.

\bibitem{DHMS2}
C.~{De Lellis}, J.~{Hirsch}, A.~{Marchese}, and S.~{Stuvard}.
\newblock {Forthcoming}.

\bibitem{DSS2}
C.~{De Lellis}, E.~{Spadaro}, and L.~{Spolaor}.
\newblock {Regularity theory for $2$-dimensional almost minimal currents I:
  Lipschitz approximation}.
\newblock {\em ArXiv e-prints. To appear in {Trans. Amer. Math. Soc.}}, Aug.
  2015.

\bibitem{DSS4}
C.~{De Lellis}, E.~{Spadaro}, and L.~{Spolaor}.
\newblock {Regularity theory for $2$-dimensional almost minimal currents III:
  blowup}.
\newblock {\em ArXiv e-prints. To appear in {Jour. of Diff. Geom}}, Aug. 2015.

\bibitem{DSS3}
C.~{De Lellis}, E.~Spadaro, and L.~Spolaor.
\newblock {Regularity {T}heory for 2-{D}imensional {A}lmost {M}inimal
  {C}urrents {II}: {B}ranched {C}enter {M}anifold}.
\newblock {\em Ann. PDE}, 3(2):3:18, 2017.

\bibitem{DSS1}
C.~{De Lellis}, E.~Spadaro, and L.~Spolaor.
\newblock {Uniqueness of tangent cones for two-dimensional almost-minimizing
  currents}.
\newblock {\em Comm. Pure Appl. Math.}, 70(7):1402--1421, 2017.

\bibitem{DS}
C.~De~Lellis and E.~N. Spadaro.
\newblock {$Q$}-valued functions revisited.
\newblock {\em Mem. Amer. Math. Soc.}, 211(991):vi+79, 2011.

\bibitem{Douglas}
J.~Douglas.
\newblock {Minimal surfaces of higher topological structure}.
\newblock {\em Ann. of Math. (2)}, 40(1):205--298, 1939.

\bibitem{FF}
H.~Federer and W.~H. Fleming.
\newblock {Normal and integral currents}.
\newblock {\em Ann. of Math. (2)}, 72:458--520, 1960.

\bibitem{Fleming}
W.~H. Fleming.
\newblock {An Example in the Problem of Least Area}.
\newblock {\em P. Am. Math. Soc.}, 7:1063--1074, 1956.

\bibitem{HS}
R.~Hardt and L.~Simon.
\newblock {Boundary regularity and embedded solutions for the oriented
  {P}lateau problem}.
\newblock {\em Ann. of Math. (2)}, 110(3):439--486, 1979.

\bibitem{Spolaor}
L.~{Spolaor}.
\newblock {Almgren's type regularity for {S}emicalibrated {C}urrents}.
\newblock {\em ArXiv e-prints}, Nov. 2015.

\bibitem{White97}
B.~White.
\newblock {Classical area minimizing surfaces with real-analytic boundaries}.
\newblock {\em Acta Math.}, 179(2):295--305, 1997.

\end{thebibliography}

\end{document}